\date{}
\renewcommand{\geq}{\geqslant}
\renewcommand{\leq}{\leqslant}
\newtheorem{prpstn}{Proposition}[section]
\newtheorem{thrm}[prpstn]{Theorem}
\newtheorem{rmrk}[prpstn]{Remark}
\newtheorem{lmm}[prpstn]{Lemma}
\newtheorem{crllr}[prpstn]{Corollary}
\newtheorem{dfntn}[prpstn]{Definition}
\theoremstyle{remark}
\newtheorem{xmpl}[prpstn]{Example}
\newcommand{\R}{{\mathbb{R}}}
\newcommand{\N}{{\mathbb{N}}}
\newcommand{\Z}{{\mathbb{Z}}}
\newcommand{\BV}{{\mathbf{BV}}}
\renewcommand{\L}[1]{\mathbf{L^{\pmb #1}}}
\newcommand{\C}[1]{\mathbf{C^{\pmb #1}}}
\newcommand{\Cc}[1]{\mathbf{C_c^{#1}}}
\newcommand{\Lloc}[1]{\mathbf{L_{loc}^{\pmb #1}}}
\newcommand{\rsp}{\mathcal{RS}_{\rm p}}
\newcommand{\rsv}{\mathcal{RS}_{\rm v}}
\newcommand{\rsh}{\mathcal{RS}_{\rm h}}
\newcommand{\rsc}{\mathcal{RS}_{\rm c}}
\newcommand{\rs}{\mathcal{RS}}
\renewcommand{\d}{{\rm{d}}}
\title{Coherence and flow-maximization of a one-way valve}
\author{Andrea\ Corli$^*$, Ulrich Razafison$^{**}$, Massimiliano\ D.\ Rosini$^{*,***}$
\\
\small\textit{$^{*}$Department of Mathematics and Computer Science, University of Ferrara}
\\
\small\textit{I-44121 Italy}
\\
\small\textit{$^{**}$Laboratoire de math\'ematiques, CNRS UMR 6623, Universit\'e de Franche-Comt\'e, 16 route de Gray}
\\
\small\textit{25030 Besan\c{c}on, France}
\\
\small\textit{$^{***}$Uniwersytet Marii Curie-Sk\l odowskiej, Plac Marii Curie-Sk\l odowskiej 1}
\\
\small\textit{20-031 Lublin, Poland}
}
\numberwithin{equation}{section}
\let\originalleft\left
\let\originalright\right
\renewcommand{\left}{\mathopen{}\mathclose\bgroup\originalleft}
\renewcommand{\right}{\aftergroup\egroup\originalright}
\begin{document}

\allowdisplaybreaks
\maketitle
\begin{abstract}
We consider a mathematical model for the gas flow through a one-way valve and focus on two issues. First, we propose a way to eliminate the chattering (the fast switch on and off of the valve) by slightly modifying the design of the valve. This mathematically amounts to the construction of a coupling Riemann solver with a suitable stability property, namely, coherence. We provide a numerical comparison of the behavior of the two valves. Second, we analyze, both analytically and numerically, for several significative situations, the maximization of the flow through the modified valve according to a control parameter of the valve and time.

\medskip
\noindent
Keywords: systems of conservation laws, gas flow, valve, Riemann problem, coupling conditions, chattering, maximization, control, isentropic Euler equations, $p$-system.

\medskip
\noindent
2010 AMS subject classification: 35L65, 35L67, 76B75
\end{abstract}

\section{Introduction}

This paper aims at improving the modeling of a gas flow through a one-way valve, thus carrying on the research on the same subject in \cite{CFFR1, CR1, CR2}. The motivation of these studies is to provide an {\em analytic} modeling of such flows, which we base on hyperbolic systems of conservation laws and their Riemann solvers. Two related issues are considered here. First, how to remove the chattering (the fast switch on and off) of a valve in correspondence of some threshold states \cite{Hos-Champneys, Ulanicki-Skworcow}, by slightly modifying the design of the valve. Second, the maximization of the flow through the new valve, according to a characteristic parameter of the valve and time.

Since we focus on the behavior of the valve, we make some simplifying assumptions on the flow. The gas flow takes place along two straight pipes having equal and constant cross-sections; the position along the pipes is denoted by $x\in\R$ and they are joint by a valve at $x=0$. Thus we neglect the wall deformation of the pipes under pressure loads. The flow is characterized by the mass density $\rho>0$ and the velocity $v$ of the gas; we assume that it is isothermal and so we take 
\[
p(\rho) \doteq a^2\rho
\] 
as pressure law, where the constant $a>0$ is the sound speed. The flow is governed by the Euler equations
\begin{equation}\label{e:system}
\begin{cases}
\partial_t\rho + \partial_x (\rho \, v) =0,
\\
\partial_t(\rho \, v) + \partial_x\left(\rho \, v^2 + p(\rho)\right) = 0,
\end{cases}
\end{equation}
where $t>0$ is the time.
The initial-value problem for system \eqref{e:system} when the initial data are constant, apart for a single jump, is called Riemann problem and is well understood \cite{LeVeque-book}. The solution is autosimilar and provided by a {\em Riemann Solver} $\rsp$; it consists of constant states separated by shock or rarefaction waves. The solver $\rsp$ satisfies several properties but in particular it is {\em coherent}; in a few words, this means that, for any solution $u\doteq(\rho,q)$ provided by $\rsp$, with $q\doteq\rho\,v$ being the momentum, the Riemann problem having for initial data the traces $u(t,x^\pm)$ and solved by $\rsp$ leads to a function having the same local behavior of $u$ in a neighborhood of $(t,x)$. This property can be understood as a sort of interior stability of $\rsp$.

\smallskip

The modeling, maximization, and control of gas flows through networks of pipes have been recently considered in several papers; we refer to \cite{Gugat-Herty_survey} for a comprehensive survey on the subject and just quote some relevant references. Among the first papers dealing with a rigorous mathematical modeling of gas flows in networks we quote \cite{Banda-Herty-Klar2, Banda-Herty-Klar1}. There, the authors use system \eqref{e:system} and require that only positive velocities are admitted; moreover, at each junction, either the pressure is continuous or the flow is subsonic. We refer to Proposition \ref{p:subsonic} below for a discussion of the latter issue. Then, they find a unique solution under the additional condition that the flow is maximal at each junction.  They also provide some simple numerical modelings of two valves, which have either maximal \cite{Banda-Herty-Klar2} or zero \cite{Banda-Herty-Klar1} flux on the outgoing pipe.
Optimization and control problems are considered in \cite{Banda-Herty, Herty-Gugat, Gugat-Herty-Schleper, Herty-compressors, Herty-Sachers} for gas flows with compressors; in \cite{GLMSSW2, GLMSSW1, Martin-Moller-Moritz} also valves are present, and the problem is solved by suitable discretisations of the modeling equations. We emphasize that in the latter papers the treatment of valves is very different from ours because it is based on the supply and demand functions.

\smallskip

In this paper we pursue the analysis started in \cite{CR1} and focus again on flow-control valve: roughly speaking, for a fixed flow value $q_*$, the valve keeps the flow equal to $q_*$ if possible, otherwise it closes. As in \cite{CFFR1, CR1, CR2}, the effect of a valve is reproduced by enforcing a coupling between the ingoing and the outgoing flow at $x=0$.
This is encoded by the so-called {\em coupling Riemann solver}, c-Riemann solver for short, that gives solutions to the Riemann problem at $x=0$ for \eqref{e:system} for the coupling problem induced by the valve. The analysis of coupled Riemann solvers has a long history, starting from the seminal paper \cite{Godlewski-Raviart}; we refer the reader to the recent article \cite{Boutin-Coquel-LeFloch_II} for general information and detailed references. We point out, however, that most of the papers in the literature either deal with scalar equations (we consider a system of two equations) or require stricter coupling condition than ours, for example the continuity of the traces at $x=0$ (in our case this will only hold for the momentum component of the solution). From a mathematical point of view, our modeling rather lies in the framework of constrained Riemann problems: see \cite{Colombo-Goatin} for scalar conservation laws, \cite{Garavello-Goatin} for a $2\times2$ system, and \cite{MR3965292} for recent advances. We also point out that, with respect to \cite{Banda-Herty-Klar2, Banda-Herty-Klar1}, we neither require the continuity of the pressure nor that flows are subsonic at the valve.

Different valves correspond to different c-Riemann solvers, see \cite{CFFR1, CR1, CR2} and Sections~\ref{s:prelim},~\ref{s:cova} for some examples. 
A key analytic feature of the modeling is the possible {\em incoherence} of the corresponding c-Riemann solver, which is related to chattering. Clearly, incoherence leads to the numerical instability of the solution, see for instance the central column in \figurename~\ref{f:comparison:RSh_RSv}.

\smallskip

In this paper we slightly modify the c-Riemann solver $\rsv$ introduced in \cite[Section~4]{CR1}; the motivation is that $\rsv$ is incoherent. Incoherent states for $\rsv$ are supersonic, indeed.
In most real gas-flows through pipe networks, supersonic states do not occur, and the reason is attributed to friction terms \cite{Banda-Herty-Klar2}, \cite[pages 45 and 49]{Taylor} and safety reasons. Supersonic flows {\em do} occur in particular circumstances, indeed, see for example \cite{Modesti-Pirozzoli} and references therein.
Moreover, from a mathematical point of view, the invariant domains for the Riemann solver $\rsp$ always contain supersonic states (see \cite{LeVeque-book}) and, as a consequence, these states can appear even if they are not present initially; the latter happens as well for $\rsv$, as we show in Proposition \ref{p:subsonic} {\em (ii)}. The issue is how to modify $\rsv$ to recover coherence.
The new proposed c-Riemann solver $\rsh$
\begin{itemize} 
\item is coherent; 
\item differs from $\rsv$ {\em only} for the states that lead $\rsv$ to lose coherence;
\item for incoherent initial data, it selects the unique solution that maximizes the flow through the valve among all c-Riemann solvers.
\end{itemize}
The last property deserves a comment. As it has been first pointed out in \cite{Holden-Risebro}, such a condition is understood as a sort of \lq\lq entropy\rq\rq\ condition  \cite{Banda-Herty-Klar2, Banda-Herty-Klar1, garavellopiccoli-book, Holden-Risebro}, because it singles out  solutions uniquely. It is interesting to observe that this property is suggested by the behavior of a valve with a positive reaction time, see Section \ref{s:cova}.

We provide some numerical simulations letting us conjecture that  $\rsh$ furnishes the solution obtained by applying $\rsv$ at every time step $\Delta t$ (where chattering occurs) as $\Delta t\to0$, see the last two columns in \figurename~\ref{f:comparison:RSh_RSv}. We currently miss of a general analytic proof of the latter statement; nevertheless, such simulations suggest that $\rsh$ reproduces the {\em final effects} on the gas flow of a chattering valve, without the inconvenient of the (numerical) instability caused by incoherence.
This suggests a different design of the valve corresponding to $\rsv$, replacing it with the valve corresponding to $\rsh$.
By the way, the study of the coherence of a constrained Riemann problem is surely of interest from the mathematical point of view.

\smallskip

Here follows an outline of the paper. In Section \ref{s:prelim} we first introduce our notation and quickly review some basic facts about system \eqref{e:system}. We emphasize that, there and in the following, {\em the Lax curves represented in the pictures are always exact} and not merely qualitative. Then we summarize the modeling of the flow through a valve, with a special focus to the above mentioned valve.  Section \ref{s:cova} provides the definition of the modified Riemann solver $\rsh$ and the proof of its coherence. 
In Section \ref{s:Ulrich} we first introduce the numerical scheme to be used in the following and the reasons of our choice; then we give some comparisons between exact and numerical solutions.
Section \ref{s:max} is the core of the paper. There, we first state the maximization problem under study; it depends both on the flow threshold $q_*$ and on the time horizon $T$. In same simple cases the solutions can be computed analytically, and they are compared to the numerical solutions to further validate the numerical scheme. Then we give some numerical simulations of more complicated situations, in particular dealing with the perturbation of the incoming flow of either a shock or a rarefaction. 

\section{Preliminary results and notation}\label{s:prelim}
In this section we first briefly recall the main facts about system \eqref{e:system} with the pressure law $p(\rho)=a^2\rho$, in particular for what concerns Lax curves and their properties. All of them are well known, see \cite{LeVeque-book} and \cite{Bressan-book, Dafermos-book} for general information, but this avoids us to systematically refer the reader to other books or papers. Then, we summarize the modeling of a gas flow through a one-way valve \cite{CR1}, which is located at $x=0$; we also provide some new results.  

\smallskip

We always deal with the conservative variables $u\doteq(\rho,q)$, where $q\doteq\rho \, v$ is the momentum, so that system \eqref{e:system} can be written as
\begin{equation}\label{e:systemq}
\begin{cases}
\partial_t\rho + \partial_xq=0,
\\
\partial_tq + \partial_x\left(\frac{q^2}{\rho} + a^2 \rho\right) = 0.
\end{cases}
\end{equation}
We denote $\Omega \doteq \{(\rho,q) \in \R^2 : \rho>0\}$. 
The eigenvalues of \eqref{e:systemq} are $\lambda_1(u) \doteq \frac{q}{\rho} - a$, and $\lambda_2(u) \doteq \frac{q}{\rho} + a$; system \eqref{e:systemq} is strictly hyperbolic in $\Omega$ and $\lambda_1,\lambda_2$ are genuinely nonlinear. 
The Riemann problem for \eqref{e:systemq} is the Cauchy problem with initial condition
\begin{equation}\label{e:Riemann}
u(0,x) = 
\begin{cases}
u_\ell &\hbox{ if } x<0,
\\
u_r &\hbox{ if } x\geq0,
\end{cases}
\end{equation}
where $u_\ell,u_r\in\Omega$ are constant states. Solutions to \eqref{e:systemq}, \eqref{e:Riemann} are meant in the weak sense as follows.

\begin{dfntn}
A function $u \in \C0([0,\infty);\Lloc1(\R;\Omega))$ is a \emph{weak solution} to the Riemann problem \eqref{e:systemq}, \eqref{e:Riemann} in $[0,\infty)\times\R$ if for any $\varphi \in \Cc\infty([0,\infty)\times\R;\R)$ we have
\begin{align*}
\int_0^\infty\int_{\R} \Bigl[ \rho \, \partial_t\varphi + q \, \partial_x\varphi \Bigr] \d x \,\d t
+ \rho_\ell \int_{-\infty}^0 \varphi(0,x) \,\d x + \rho_r \int_0^\infty \varphi(0,x) \,\d x &= 0,
\\
\int_0^\infty\int_{\R} \Bigl[ q \, \partial_t\varphi + \Bigl(\frac{q^2}{\rho^2}+a^2\Bigr) \rho \, \partial_x\varphi \Bigr] \d x \,\d t
+ q_\ell \int_{-\infty}^0 \varphi(0,x) \,\d x + q_r \int_0^\infty \varphi(0,x) \,\d x &= 0.
\end{align*}
\end{dfntn}

If $x=\gamma(t)$ is a smooth curve along which a weak solution $u$ is discontinuous, then the following Rankine-Hugoniot conditions must be satisfied, where $u^\pm(t) \doteq u(t,\gamma(t)^\pm)$ are the traces of $u$ along $x=\gamma(t)$:
\begin{align}\label{e:RH1}
&\left(\rho^+-\rho^-\right) \dot{\gamma} = q^+-q^-,\\
\label{e:RH2}
&\left(q^+-q^-\right) \dot{\gamma} = \left(\dfrac{(q^+)^2}{\rho^+} + a^2 \, \rho^+\right) - \left(\dfrac{(q^-)^2}{\rho^-} + a^2 \, \rho^-\right).
\end{align}

For $u_o\in\Omega$ we define $\mathcal{FL}_i^{u_o}, \mathcal{BL}_i^{u_o} : (0,\infty) \to \R$, $i\in\{1,2\}$, by
\begin{align*}
\mathcal{FL}_1^{u_o}(\rho) &\doteq
\begin{cases}
\mathcal{R}_1^{u_o}(\rho)&\hbox{ if }\rho \in (0,\rho_o],
\\
\mathcal{S}_1^{u_o}(\rho)&\hbox{ if }\rho \in (\rho_o,\infty),
\end{cases}&
\mathcal{FL}_2^{u_o}(\rho) &\doteq
\begin{cases}
\mathcal{S}_2^{u_o}(\rho)&\hbox{ if }\rho \in (0,\rho_o),
\\
\mathcal{R}_2^{u_o}(\rho)&\hbox{ if }\rho \in [\rho_o,\infty),
\end{cases}
\\
\mathcal{BL}_1^{u_o}(\rho) &\doteq
\begin{cases}
\mathcal{S}_1^{u_o}(\rho)&\hbox{ if }\rho \in (0,\rho_o),
\\
\mathcal{R}_1^{u_o}(\rho)&\hbox{ if }\rho \in [\rho_o,\infty),
\end{cases}&
\mathcal{BL}_2^{u_o}(\rho) &\doteq
\begin{cases}
\mathcal{R}_2^{u_o}(\rho)&\hbox{ if }\rho \in (0,\rho_o],
\\
\mathcal{S}_2^{u_o}(\rho)&\hbox{ if }\rho \in (\rho_o,\infty),
\end{cases}
\end{align*}
where $\mathcal{S}_i^{u_o}, \mathcal{R}_i^{u_o} : (0,\infty) \to \R$, $i\in\{1,2\}$, are defined by
\begin{align}\label{e:SRi}
\mathcal{S}_i^{u_o}(\rho) 
&\doteq \rho \left( \frac{q_o}{\rho_o} +(-1)^i \, a \left(\sqrt{\dfrac{\rho}{\rho_o}} - \sqrt{\dfrac{\rho_o}{\rho}}\right) \right),&
\mathcal{R}_i^{u_o}(\rho) 
&\doteq \rho \left( \frac{q_o}{\rho_o} +(-1)^i \, a \, \ln\left(\dfrac{\rho}{\rho_o}\right)\right).
\end{align}

The graphs of the functions $\mathcal{FL}_i^{u_o}$ and $\mathcal{BL}_i^{u_o}$ are the {\em forward} $\mathsf{FL}_i^{u_o}$ and {\em backward} $\mathsf{BL}_i^{u_o}$ Lax curves, respectively, of the $i$-th family through $u_o$, see \figurename~\ref{f:RVLaxcurves}.
Analogously, the shock $\mathsf{S}_i^{u_o}$ and rarefaction $\mathsf{R}_i^{u_o}$ curves through $u_o$ are the graphs of the functions $\mathcal{S}_i^{u_o}$ and $\mathcal{R}_i^{u_o}$. The shock speeds are $s_{1}^{u_o}(\rho)\doteq v_o - a\, \sqrt{\rho/\rho_o}$ and $s_{2}^{u_o}(\rho) \doteq v_o + a \, \sqrt{\rho/\rho_o}$.
A state $(\rho,q)\in\Omega$ is \emph{subsonic} if $|v| < a$ and \emph{supersonic} if $|v| > a$;  
the {\em sonic lines} are $q=\pm a \, \rho$. 

\begin{figure}[!htb]\centering
\begin{tikzpicture}[every node/.style={anchor=south west,inner sep=0pt},x=1mm, y=1mm]
\node at (0,0) {\includegraphics[width=45mm]{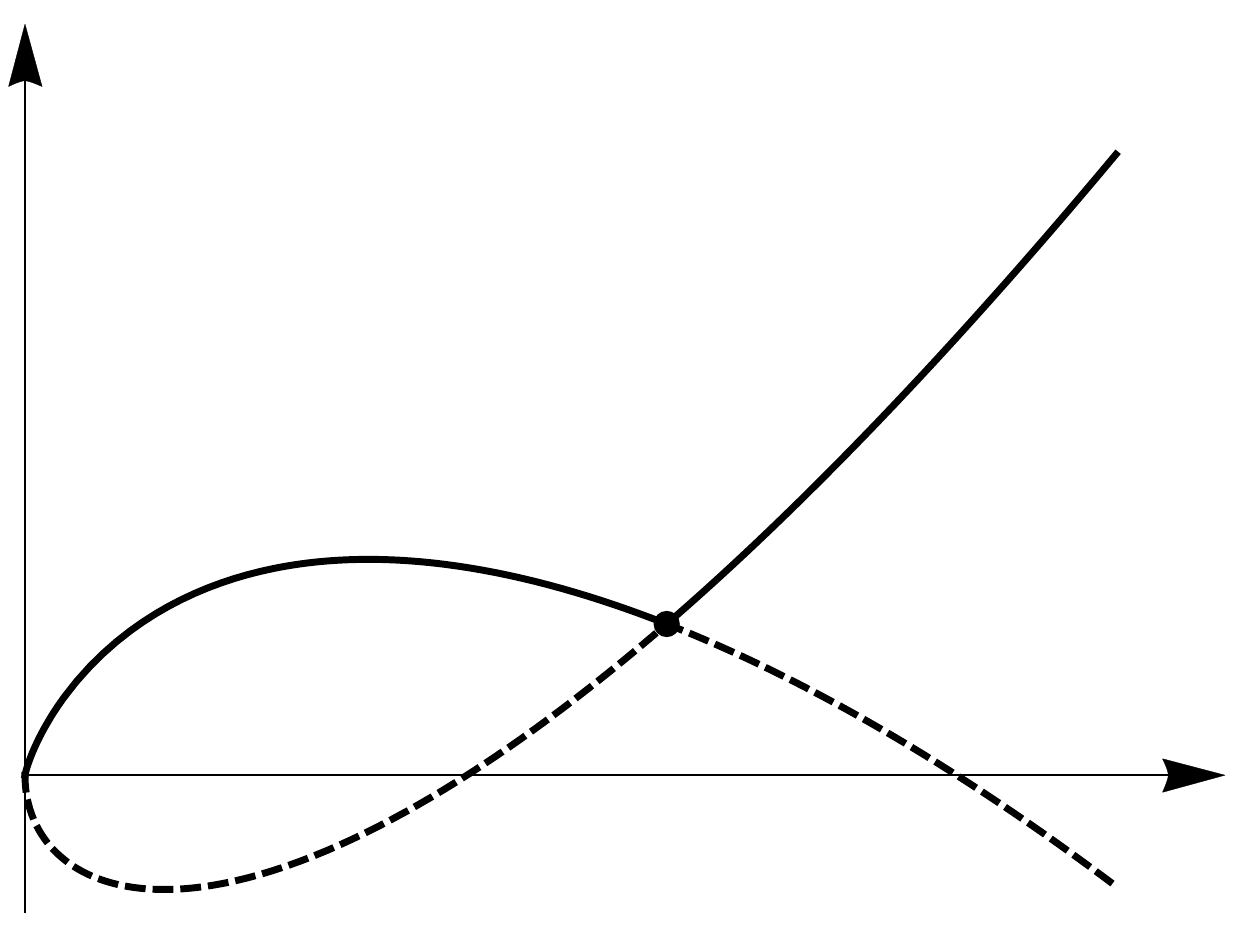}};
\node at (43,1) {\strut $\rho$};
\node at (-3,30) {\strut $q$};
\node at (10,30) {\strut $\mathsf{FL}_1^{u_o} \cup \mathsf{FL}_2^{u_o}$};
\node at (12,14) {\strut $\mathsf{R}_1^{u_o}$};
\node at (40,28) {\strut $\mathsf{R}_2^{u_o}$};
\node at (16,0) {\strut $\mathsf{S}_2^{u_o}$};
\node at (31,8) {\strut $\mathsf{S}_1^{u_o}$};
\node at (21,12) {\strut $u_o$};
\end{tikzpicture}
\quad
\begin{tikzpicture}[every node/.style={anchor=south west,inner sep=0pt},x=1mm, y=1mm]
\node at (0,0) {\includegraphics[width=45mm]{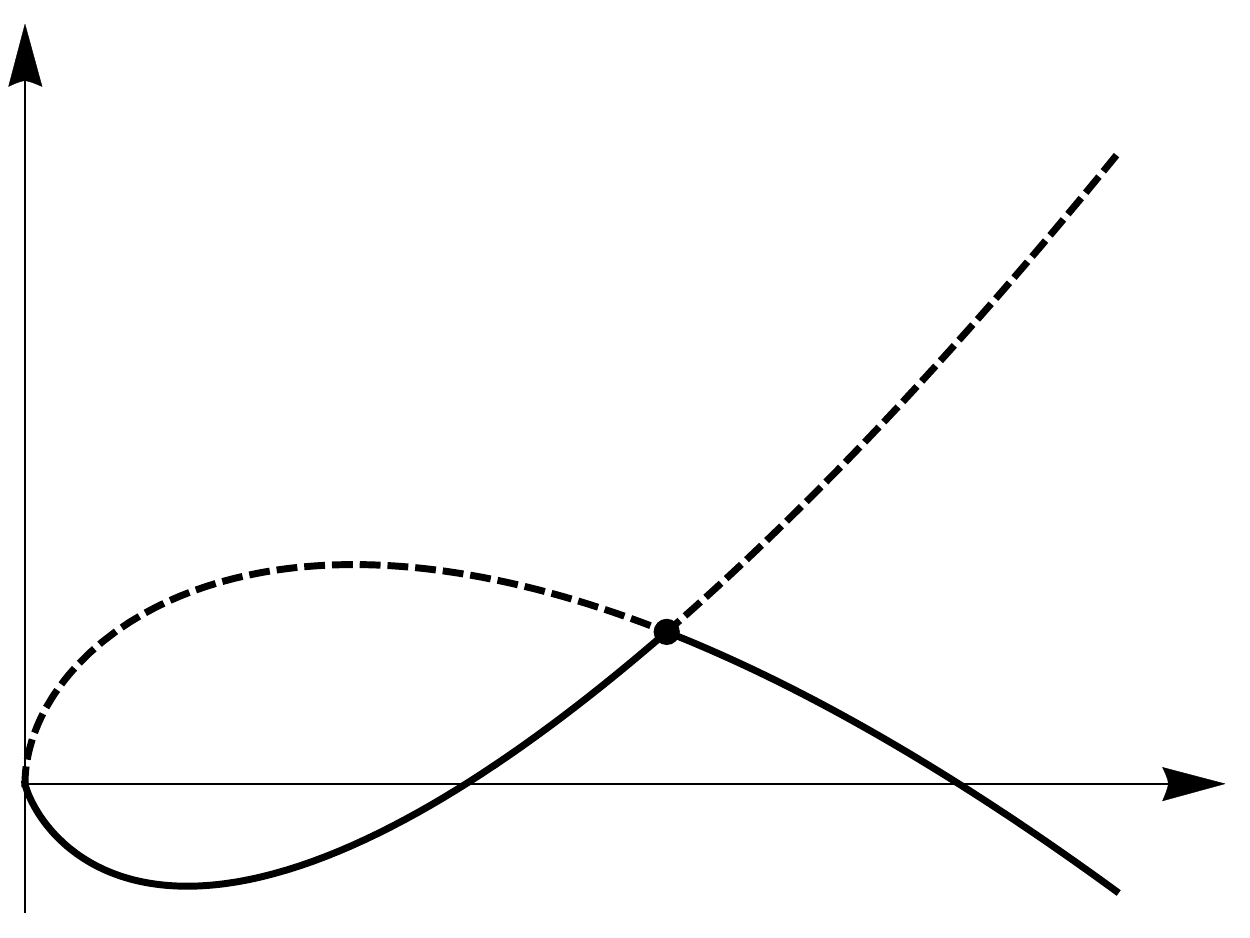}};
\node at (43,1) {\strut $\rho$};
\node at (-3,30) {\strut $q$};
\node at (10,30) {\strut $\mathsf{BL}_1^{u_o} \cup \mathsf{BL}_2^{u_o}$};
\node at (12,14) {\strut $\mathsf{S}_1^{u_o}$};
\node at (40,28) {\strut $\mathsf{S}_2^{u_o}$};
\node at (16,0) {\strut $\mathsf{R}_2^{u_o}$};
\node at (30,8) {\strut $\mathsf{R}_1^{u_o}$};
\node at (21.5,11.5) {\strut $u_o$};
\end{tikzpicture}
\quad
\begin{tikzpicture}[every node/.style={anchor=south west,inner sep=0pt},x=1mm, y=1mm]
\node at (0,0) {\includegraphics[width=45mm]{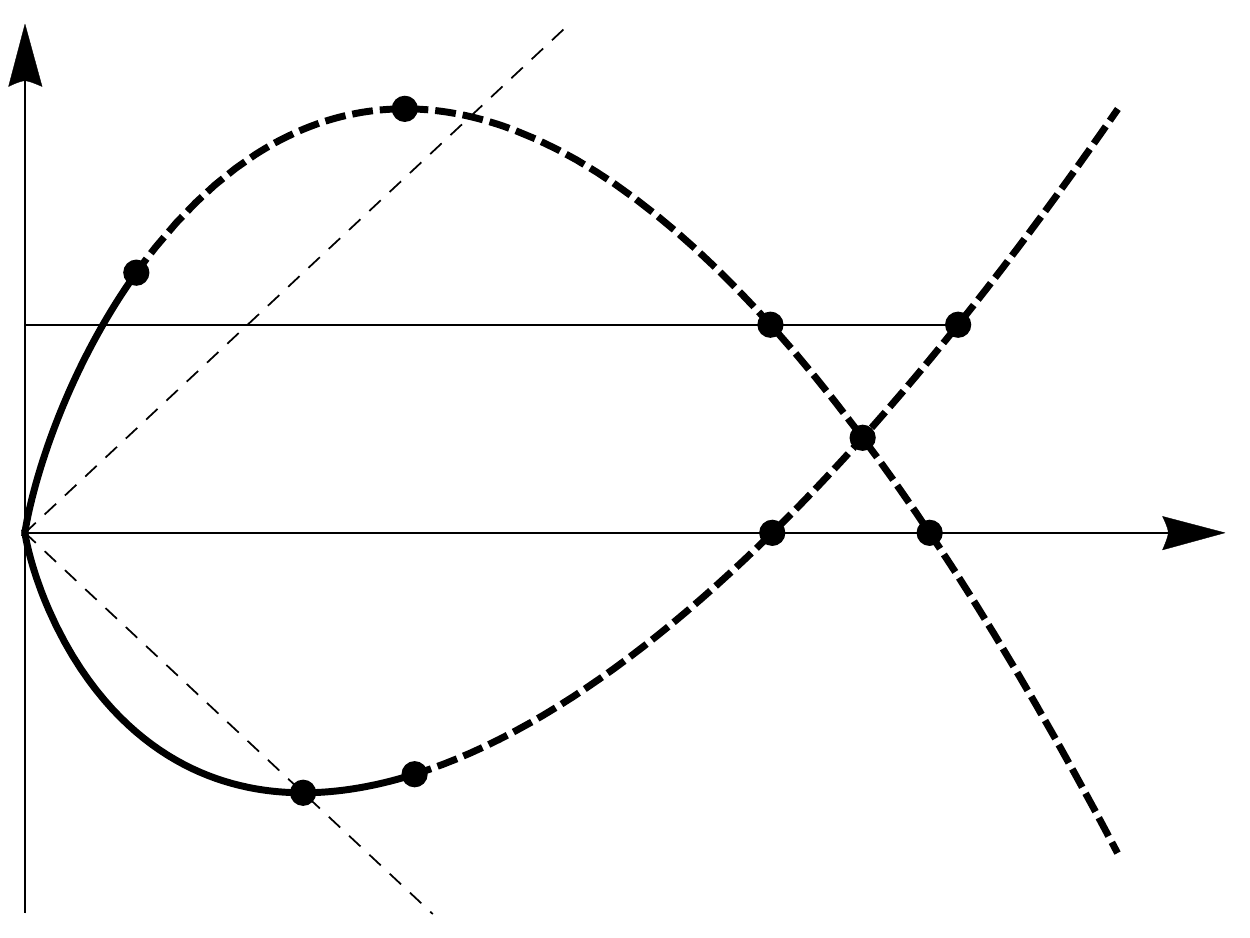}};
\node at (43,10) {\strut $\rho$};
\node at (-3,30) {\strut $q$};
\node at (40,29) {\strut $\mathsf{BL}_2^{u_r}$};
\node at (41,0) {\strut $\mathsf{FL}_1^{u_\ell}$};
\draw[latex-] (34,15) -- (41,17.5) node[right] {\strut $\hat{u}(0,u_\ell)$};
\node at (16,14.5) {\strut $\check{u}(0,u_r)$};
\node at (15,22) {\strut $\hat{u}(q_o,u_\ell)$};
\node at (-3,20) {\strut $q_o$};
\draw[latex-] (32,18.5) -- (41,21.5) node[right] {\strut $\tilde{u}(u_\ell,u_r)$};
\draw[latex-] (35.5,22.5) -- (41,25.5) node[right] {\strut $\check{u}(q_o,u_r)$};
\node at (15.5,2.5) {\strut $u_r$};
\node at (9,30) {\strut $\bar{u}(u_\ell)$};
\node at (1.5,24) {\strut $u_\ell$};
\node at (20,33) {\strut $q=a\,\rho$};
\node at (16.5,-2) {\strut $q=-a\,\rho$};
\end{tikzpicture}
\caption{Left: forward Lax curves. Center: backward Lax curves. Right: an illustration of the quantities in Definition~\ref{def:notation}.}
\label{f:RVLaxcurves}
\end{figure}

We introduce the following notations, see \figurename~\ref{f:RVLaxcurves} on right for an illustration.

\begin{dfntn}\label{def:notation} 
For $u_\ell,u_r\in\Omega$ we denote:
\begin{itemize}[leftmargin=*]\setlength{\itemsep}{0cm}\setlength\itemsep{0em}
\item 
$\bar{u}(u_\ell)$ is the element of $\mathsf{FL}_1^{u_\ell}$ with the maximum $q$-coordinate;
\item 
$\tilde{u}(u_\ell,u_r)$ is the (unique) element of $\mathsf{FL}_1^{u_\ell} \cap \mathsf{BL}_2^{u_r}$;
\item
$\hat{u}(q_o,u_\ell)$, for any $q_o \leq \bar{q}(u_\ell)$, is the intersection of $\mathsf{FL}_1^{u_\ell}$ and $q=q_o$ with the largest $\rho$-coordinate;
\item
$\check{u}(q_o,u_r)$, for any $q_o \geq 0$, is the intersection of $\mathsf{BL}_2^{u_r}$ and $q=q_o$ with the largest $\rho$-coordinate.
\end{itemize}
\end{dfntn}

Now, we briefly recall the modeling of a gas flow through a \emph{one-way} valve \cite{CR1}.
One-way valves are characterized by letting the flow occur (at $x=0$) in a single direction; we fix the positive one for definiteness. 

We denote by $\BV(\R;\Omega)$ the space of $\Omega$-valued functions with bounded variation. 
We define $\mathsf{D} \doteq \Omega \times \Omega$.
The Lax Riemann solver $\rsp \colon \mathsf{D}\to\BV(\R;\Omega)$, whose action is denoted for $(u_\ell,u_r)\in \mathsf{D} $ by $(t,x) \mapsto \rsp[u_\ell,u_r](x/t)$, provides the unique entropic solution to Riemann problem  \eqref{e:systemq}, \eqref{e:Riemann}, see \cite{LeVeque-book}.
We denote for brevity, when the dependence on initial data is clear,
\begin{align}\label{eq:BMTH}
&u_{\rm p} \doteq \rsp[u_\ell,u_r],&u_{\rm p}^\pm \doteq u_{\rm p}(0^\pm).
\end{align}

\begin{rmrk}\label{r:BHK} If the $1$-wave in $u_{\rm p}$ is a shock with positive speed, then we have $v_\ell>a$ by $\eqref{e:SRi}_1$. If the $1$-wave is a rarefaction, then it enters the region $x>0$ if and only if $v_r>a$; if also $a<v_\ell<v_r$, then the whole rarefaction enters the region $x>0$. An analogous remark holds for $2$-waves. As a consequence, subsonic states $u_\ell,u_r$ never produce waves moving to the same direction.
\end{rmrk}

Solutions to \eqref{e:systemq} for $x\ne0$ will always be given by $\rsp$; at $x=0$ we model the flow through the valve by a {\em c-Riemann solver} $\rs$ (\lq\lq c\rq\rq\ for coupling), as we are going to define.
First, for each $(u_\ell,u_r) \in \mathsf{D}$ we assign the flow $Q = Q(u_\ell,u_r) \in [ 0 , \overline{Q}(u_\ell) ]$ through the valve, where $\overline{Q} \colon \Omega\to\R$ is given by
\begin{equation}\label{e:AliceinChains}
\overline{Q}(u) \doteq
\begin{cases}
\bar{q}(u) = \frac{a \, \rho}{e} \, \exp\left(\frac{v}{a}\right)&\hbox{ if } v \leq a ,
\\
q&\hbox{ if } v > a .
\end{cases}
\end{equation}
We observe that $\overline{Q}\in\C1(\Omega)$.
The introduction of $\overline{Q}(u)$ is needed to select the values of the flow across the valve, in order that the Riemann solver $\rsc$ defined below provides at most {\em one} wave (a $1$-rarefaction or a $1$-shock) on the left of the valve; see Remark~\ref{r:rem4} \ref{itemmm} below.

\begin{dfntn}\label{d:RSV} 
Let $Q \colon \mathsf{D} \to \R$ be such that $Q(u_\ell,u_r) \in [ 0 , \overline{Q}(u_\ell) ]$ for every $(u_\ell,u_r)\in\mathsf{D}$. 
The corresponding c-Riemann solver $\rsc \colon \mathsf{D} \to \BV(\R;\Omega)$ is defined by
\begin{equation}\label{e:rsc}
\rsc[u_\ell,u_r](\xi) \doteq
\begin{cases}
\rsp\left[u_\ell,\hat{u}\left(Q(u_\ell,u_r),u_\ell\right)\right](\xi)&\hbox{ if }\xi<0,\\[5pt]
\rsp\left[\check{u}\left(Q(u_\ell,u_r),u_r\right),u_r\right](\xi)&\hbox{ if }\xi\geq0.
\end{cases}
\end{equation}
\end{dfntn}

Analogously to \eqref{eq:BMTH} we denote
$u_{\rm c} \doteq \rsc[u_\ell,u_r]$ and $u_{\rm c}^\pm \doteq u_{\rm c}(0^\pm)$.

\begin{rmrk}\label{r:rem4}
We now give several explanations of the previous definition and introduce some notations.

\begin{enumerate}[label={\em(\roman*)},leftmargin=*,nolistsep]\setlength{\itemsep}{0cm}\setlength\itemsep{0em}

\item 
A c-Riemann solver $\rsc$ is characterized by $Q$. For brevity we omit the dependence on $Q$.

\item
The conservation of the mass (which corresponds to the first Rankine-Hugoniot condition \eqref{e:RH1}) must hold at $x=0$. This condition  is automatically satisfied by $u_{\rm c}$: if $u_{\rm c}$ has a {\em stationary} discontinuity at $x=0$, then $\dot \gamma=0$ but $q_{\rm c}(0^-) = Q(u_\ell,u_r) = q_{\rm c}(0^+)$ because of the definitions of $\check u$ and $\hat u$, and so \eqref{e:RH1} holds.
On the contrary, the conservation of momentum is lost at $x=0$, in general; hence the second Rankine-Hugoniot condition \eqref{e:RH2}, which encodes this property, cannot be required. Indeed, definition \eqref{e:rsc} does not imply \eqref{e:RH2}. As a consequence, $u_{\rm c}$ may fail to be a weak solution of \eqref{e:systemq} at $x=0$.

\item
We say that for $(u_\ell,u_r)\in\mathsf{D}$ the valve is {\em closed} if $Q(u_\ell,u_r)=0$ and \emph{open} if $Q(u_\ell,u_r) \neq 0$. 
\item
By Definition~\ref{def:notation} and \eqref{e:AliceinChains} we deduce that $\overline{Q}(u_\ell) \geq q_\ell$ and $0 < \overline{Q}(u_\ell) \leq \bar q(u_\ell)$.
So, if $Q \in [ 0 , \overline{Q}(u_\ell) ]$, then $\hat u(Q,u_\ell)$ and $\check u(Q,u_r)$ are well defined.
We denote for short
\begin{align*}
\hat{u} = \hat{u}(u_\ell,u_r) \doteq \hat{u}\left(Q\left(u_\ell,u_r\right),u_\ell\right),
\qquad
\check{u} = \check{u}(u_\ell,u_r) \doteq \check{u}\left(Q\left(u_\ell,u_r\right),u_r\right).
\end{align*}
By Definition~\ref{def:notation} we deduce $\hat{\rho} \geq \bar{\rho}(u_\ell)$ and $\hat{q} = Q\left(u_\ell,u_r\right) = \check{q}$. 
\item\label{itemmm}
The states $\hat{u} = \hat{u}(u_\ell,u_r)$ and $\check{u} = \check{u}(u_\ell,u_r)$ are well defined by assuming $Q(u_\ell,u_r)\in [0,\bar q(u_\ell)]$.
However, it is easy to check that the stricter condition $Q(u_\ell,u_r) \leq \overline{Q}(u_\ell)$ required in  Definition \ref{d:RSV} is needed in order that $\xi\mapsto\rsp[u_\ell,\hat{u}](\xi) \in \mathsf{FL}_1^{u_\ell}$ represents a single wave with {\em negative} ($\leq0$) speed, and then let \eqref{e:rsc} make sense for $\xi<0$.

Analogously, condition $Q(u_\ell,u_r) \geq 0$ ensures that $\xi\mapsto\rsp[\check{u},u_r](\xi) \in \mathsf{FL}_2^{\check{u}}$ represents a single wave with {\em positive} ($\geq0$) speed (so that \eqref{e:rsc} makes sense for $\xi\geq0$) and it is needed in order that $q(\rsc[u_\ell,u_r](0)) = q(\rsp[u_\ell,\hat{u}](0)) = q(\rsp[\check{u},u_r](0)) \geq 0$.
\end{enumerate}
\end{rmrk}

We now discuss the occurrence of subsonic states for the solver $\rsc$.

\begin{prpstn}\label{p:subsonic} Let $(u_\ell,u_r)\in\mathsf{D}$ and $u_{\rm c} \doteq \rsc[u_\ell,u_r]$. Then:

\begin{enumerate}[label={{(\roman*)}},leftmargin=*,nolistsep]\setlength{\itemsep}{0cm}\setlength\itemsep{0em}
\item The restriction to $\xi<0$ of $u_{\rm c}$ attains supersonic values if and only if $u_\ell$ is supersonic.
\item The restriction to $\xi\geq0$ of $u_{\rm c}$ may attain supersonic values even if neither $u_\ell$ nor $u_r$ are supersonic.
\end{enumerate}
\end{prpstn}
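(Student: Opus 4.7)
The plan is to establish (i) by a direct case analysis on the 1-wave in $\rsp[u_\ell,\hat u]$, and to establish (ii) by exhibiting an explicit example.

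For the ``if'' direction of (i), if $u_\ell$ is supersonic then for $\xi$ sufficiently negative (namely, below the speed of the 1-wave, which is non-positive by Remark~\ref{r:rem4}) one has $u_{\rm c}(\xi)=u_\ell$, so a supersonic value is attained on $\xi<0$. For the ``only if'' direction I argue by contraposition: assume $|v_\ell|\leq a$. The values attained by $u_{\rm c}|_{\xi<0}$ are $u_\ell$, $\hat u$ and, when the 1-wave is a rarefaction, the intermediate states on the rarefaction fan. I split into two sub-cases. If the wave is a rarefaction ($\hat\rho\leq\rho_\ell$), then by \eqref{e:SRi} the velocity $v(\rho)=v_\ell-a\ln(\rho/\rho_\ell)$ is monotone in $\rho$, and its values on $[\hat\rho,\rho_\ell]$ form the interval $[v_\ell,v(\hat\rho)]$. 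The non-positivity of the rarefaction speed $\lambda_1(u)=v-a$ throughout the fan (Remark~\ref{r:rem4}) gives $v(\hat\rho)\leq a$, and $v\geq v_\ell\geq -a$ yields the lower bound, so $|v|\leq a$ along the whole wave. If the wave is a shock ($\hat\rho>\rho_\ell$) only the endpoints appear: $u_\ell$ is not supersonic by hypothesis, and for $\hat u$ the shock-curve formula $\eqref{e:SRi}_1$ gives $\hat v=v_\ell-a\left(\sqrt{\hat\rho/\rho_\ell}-\sqrt{\rho_\ell/\hat\rho}\right)<v_\ell\leq a$, while $\hat v=Q/\hat\rho\geq 0>-a$ because $Q\geq 0$ by Definition~\ref{d:RSV}. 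Hence $u_{\rm c}|_{\xi<0}$ attains no supersonic value.

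For (ii) I will produce an explicit counterexample. Take $a=1$, $u_\ell=(\rho_\ell,0)$ with $\rho_\ell$ large, and $u_r=(1,0)$, so both $u_\ell$ and $u_r$ are subsonic. The 2-rarefaction branch of $\mathsf{BL}_2^{u_r}$ is $q=\rho\ln\rho$ for $\rho\geq 1$, which is monotonically increasing and crosses the sonic line $q=a\rho$ at $(\rho,q)=(e,e)$. Meanwhile $\overline Q(u_\ell)=\rho_\ell/e$ is arbitrarily large for $\rho_\ell$ large, so any $Q\in(e,\overline Q(u_\ell)]$ is admissible and forces $\check u$ to lie strictly beyond the sonic point on $\mathsf{R}_2^{u_r}$; in particular $\check v>a$. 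Since $u_{\rm c}(\xi)=\check u$ for $\xi\in[0,\lambda_2(\check u))$, a supersonic value is attained on $\xi\geq 0$, while neither $u_\ell$ nor $u_r$ is supersonic.

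No step is especially hard; the care required is in the shock sub-case of (i), where the bound $\hat v\geq 0$ relies on the sign constraint $Q\geq 0$ built into Definition~\ref{d:RSV} rather than on any intrinsic property of the 1-shock curve. Everything else reduces to monotonicity of $v$ along the rarefaction fan and to the admissibility condition on $Q$ encoded in Remark~\ref{r:rem4}.
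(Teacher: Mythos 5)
Your proof of part (i) is correct but takes a genuinely more elementary route than the paper. The paper invokes the fact that $\bar u(u_\ell)$ is sonic whenever $u_\ell$ is subsonic (cited from a lemma in \cite{CR1}), and concludes at once that the whole arc $\{u\in\mathsf{FL}_1^{u_\ell}:q\leq\overline Q(u_\ell)\}$ is subsonic. You instead perform a direct rarefaction/shock case split using the explicit formulas \eqref{e:SRi}, the constraint $Q\geq 0$ from Definition~\ref{d:RSV}, and the non-positivity of wave speeds from Remark~\ref{r:rem4}\ref{itemmm}. The two arguments prove the same thing; yours is longer but self-contained, which is useful precisely because the key lemma lives in a different paper. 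You also helpfully make explicit in (ii) that the admissibility bound $Q\leq\overline Q(u_\ell)$ does not obstruct taking $Q$ large, by choosing $\rho_\ell$ large — a point the paper's one-line proof of (ii) glosses over.

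However, the explicit example in (ii) contains a concrete error. For $u_r=(1,0)$ and $\rho>\rho_r=1$, the backward Lax curve $\mathsf{BL}_2^{u_r}$ follows the \emph{shock} branch $\mathcal{S}_2^{u_r}(\rho)=a\bigl(\rho^{3/2}-\rho^{1/2}\bigr)$, not the rarefaction branch $a\,\rho\ln\rho$; you seem to have confused $\mathsf{BL}_2$ with $\mathsf{FL}_2$, for which the rarefaction branch indeed lies on the side $\rho\geq\rho_o$. With the correct formula $v(\rho)=a(\sqrt{\rho}-1/\sqrt{\rho})$, the sonic crossing of $\mathsf{BL}_2^{u_r}$ is at $\rho=(3+\sqrt5)/2\approx2.618$, not at $\rho=e$. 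By coincidence $e>(3+\sqrt5)/2$, so your stated threshold $Q>e$ does force $\check v>a$ and the conclusion survives, but the justification as written is wrong and should be repaired by using the shock-curve formula.
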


\begin{proof}
First, we prove {\em (i)}. If $u_\ell$ is subsonic, i.e.\ $|v_\ell| \leq a$, then $\overline{Q}(u_\ell) = \bar{q}(u_\ell)$ and $\bar{u}(u_\ell)$ is a sonic state by \cite[Lemma~2.6]{CR1}. Therefore for any $\xi<0$
\[
u_{\rm c}(\xi) = \rsp\left[u_\ell,\hat{u}\left(Q(u_\ell,u_r),u_\ell\right)\right](\xi) \in \left\{u \in \mathsf{FL}_1^{u_\ell} : |v|\leq a\right\},
\] 
because by assumption $Q(u_\ell,u_r) \in [ 0 , \overline{Q}(u_\ell) ]$.
The converse is trivial: if $u_\ell$ is supersonic, then $u_{\rm c}$ attains a supersonic value at least at $u_\ell$.

About {\em (ii)}, it is sufficient to choose $Q(u_\ell,u_r)$ sufficiently large in order that $\check{u}\left(Q(u_\ell,u_r),u_r\right)$ is supersonic.
\end{proof}

We can now give the definition of a {\em coherent} coupling Riemann solver.

\begin{dfntn}
A c-Riemann solver $\rsc \colon \mathsf{D} \to \BV(\R;\Omega)$ is {\em coherent} at $(u_\ell,u_r)\in\mathsf{D}$ if the traces $u_{\rm c}^\pm\doteq\rsc[u_\ell,u_r](0^\pm)$ satisfy
\begin{equation}\label{chv}
\rsc[u^-,u^+](\xi) = 
\begin{cases}
u_{\rm c}^-&\hbox{ if } \xi<0,\\
u_{\rm c}^+&\hbox{ if } \xi\geq0.
\end{cases}
\end{equation}
The \emph{coherence domain} $\mathsf{CH}$ of $\rsc$ is the set of all pairs $(u_\ell,u_r)\in\mathsf{D}$ where $\rsc$ is coherent. The set $\mathsf{CH}^{\scriptscriptstyle\complement}\doteq\mathsf{D} \setminus \mathsf{CH}$ is the \emph{incoherence domain}.
\end{dfntn}

A c-Riemann solver $\rsc$ is coherent at an initial datum $(u_\ell, u_r) \in \mathsf{D}$ if the ordered pair of the traces of the corresponding solution $\left(\rsc[u_\ell, u_r](0^-), \rsc[u_\ell, u_r](0^+)\right)$ is, in a sense, a fixed point of $\rsc$. Hence, coherence may be thought as a stability property. On the contrary, the incoherence of a c-Riemann solver is understood as modeling the chattering of a valve and may yield analytical and numerical instabilities, see for instance the central column in \figurename~\ref{f:comparison:RSh_RSv}.

The Riemann solver $\rsp$ is coherent in $\mathsf{D}$ \cite[Proposition~2.5]{CFFR1}. On the contrary, coherence may fail for $\rsc$ because of the presence of a valve.
Indeed this is the case for the c-Riemann solver $\rsv$ introduced in \cite{CR1} and that we are going to briefly recall.
Fix $q_*>0$, then $\rsv$ corresponds to the valve that keeps the flow at $x=0$ equal to $q_*$ if possible, otherwise it closes.
This motivates the way $Q=Q_{\rm v}$ is defined in \eqref{e:Blackfield} below.

\begin{dfntn}
\label{d:Blackfield}
We denote by $\rsv$ the c-Riemann solver corresponding to 
\begin{equation}\label{e:Blackfield}
Q_{\rm v}(u_\ell) \doteq 
\begin{cases}
q_*&\hbox{ if } \overline{Q}(u_\ell)\geq q_*,
\\
0&\hbox{ if }\overline{Q}(u_\ell)<q_*.
\end{cases}
\end{equation}
\end{dfntn}
Notice that $Q_{\rm v}$ in \eqref{e:Blackfield} only depends on $u_\ell$ (and $q_*$, but for the moment we keep it fixed) and not on $u_r$. We denote $u_{\rm v} \doteq \rs[u_\ell,u_r]$ and $u_{\rm v}^\pm \doteq u_{\rm v}(0^\pm)$.
The function $Q_{\rm v}$ is discontinuous along some curve in $\Omega$; we explicitly find such a curve in the following Lemma~\ref{l:closedvalve}. 

We denote
\begin{align}\label{e:usa}
&u_*^a \doteq (\rho_*^a,q_*) \doteq (q_*/a,q_*),&
&u_*^0 \doteq (\rho_*^0,0) \doteq (e\,q_*/a,0),
\end{align}
see \figurename~\ref{f:closed}. Notice that $u_*^a$ is the intersection of the line $\{ u \in \Omega : q=q_*\}$ with the sonic line $\{ u \in \Omega : v = a\}$. 
Moreover, $u_*^0$ is the unique intersection of the curve $\mathsf{BL}_1^{u_*^a}$ with the line $\{ u \in \Omega : q=0\}$.

\begin{figure}[!htb]\centering
\begin{tikzpicture}[every node/.style={anchor=south west,inner sep=0pt},x=1mm, y=.5mm]
\node at (4,4) {\includegraphics[width=50mm]{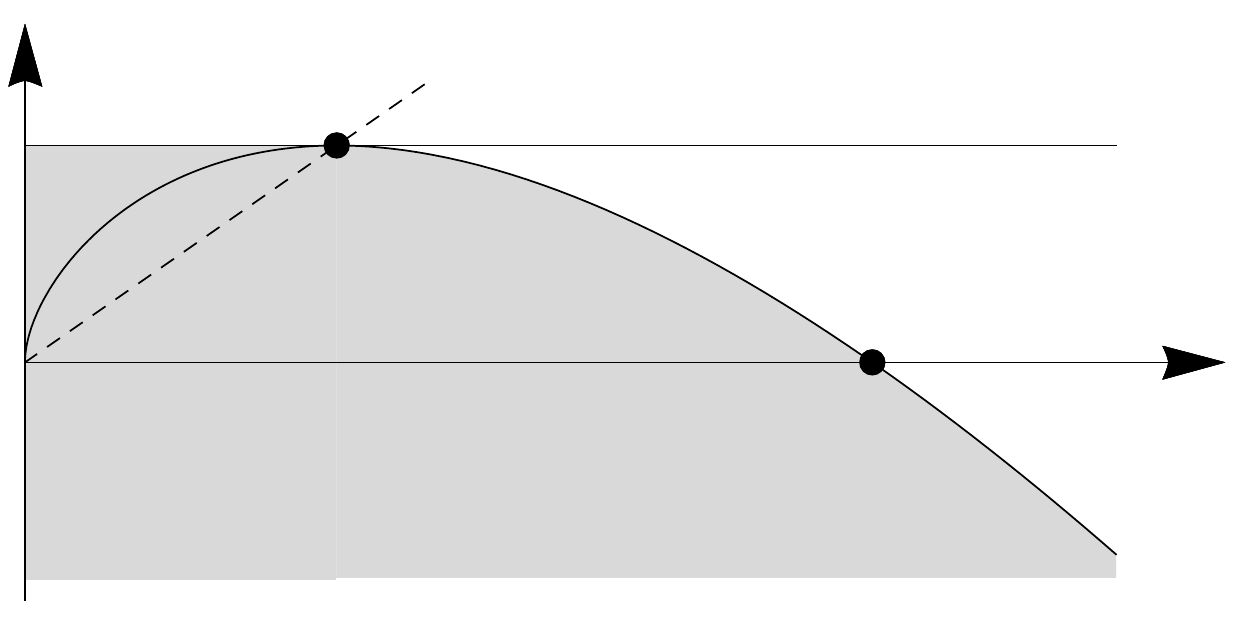}};
\node at (52,16) {\strut $\rho$};
\node at (0,48) {\strut $q$};
\node at (22,48) {\strut $q=a\,\rho$};
\node at (0,38) {\strut $q_*$};
\node at (50,5) {\strut $\mathsf{BL}_1^{u_*^a}$};
\node at (14,43) {\strut $u_*^a$};
\node at (39,25) {\strut $u_*^0$};
\node at (22,15) {\strut $\mathsf{C}_\ell$};
\end{tikzpicture}
\caption{The shaded region represents the set $\mathsf{C}_\ell$ of left states $u_\ell$ such that the valve corresponding to \eqref{e:Blackfield} is closed.}
\label{f:closed}
\end{figure}

The following lemma characterizes the states for which the valve is closed; see \figurename~\ref{f:closed}. The function $Q_{\rm v}$ is then discontinuous along the upper boundary of the set $\mathsf{C}_\ell$.

\begin{lmm}\label{l:closedvalve}
The valve is closed if and only if one of the following equivalent conditions is satisfied:
\begin{enumerate}[label={{(\roman*)}},leftmargin=*,nolistsep]\setlength{\itemsep}{0cm}\setlength\itemsep{0em}

\item
$Q_{\rm v}(u_\ell)=0$;

\item\label{ii}
$\overline{Q}(u_\ell)<q_*$;

\item
$u_\ell$ belongs to the set
\begin{equation}
\mathsf{C}_\ell \doteq 
\bigl( (0,\rho_*^a] \times (-\infty,q_*) \bigr)
\cup
\bigl\{u \in \Omega : 
\rho > \rho_*^a,\ q < \mathcal{R}_1^{u_*^a}(\rho) \bigr\}.
\label{e:CL}
\end{equation}
\end{enumerate}
\end{lmm}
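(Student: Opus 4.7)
The first equivalence (i) $\Leftrightarrow$ (ii) is immediate from \eqref{e:Blackfield}: by construction $Q_{\rm v}(u_\ell) = 0$ precisely when $\overline{Q}(u_\ell) < q_*$. For (ii) $\Leftrightarrow$ (iii) I would proceed by a case analysis driven by the piecewise definition \eqref{e:AliceinChains} of $\overline{Q}$, treating the subsonic case $v_\ell \leq a$ and the supersonic case $v_\ell > a$ separately.

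The key algebraic step concerns the subsonic branch, where $\overline{Q}(u_\ell) = \bar q(u_\ell) = (a\rho_\ell/e)\exp(v_\ell/a)$. A direct manipulation, taking logarithms and using $q_* = a\rho_*^a$ together with $\rho_*^0 = e\rho_*^a$, produces the identity
\[
\bar q(u_\ell) < q_* \quad\Longleftrightarrow\quad q_\ell < a\rho_\ell\bigl(1 - \ln(\rho_\ell/\rho_*^a)\bigr) = \mathcal{R}_1^{u_*^a}(\rho_\ell).
\]
This is the geometric heart of the argument: in the subsonic region, the level set $\{\bar q = q_*\}$ coincides with a piece of the rarefaction curve $\mathsf{R}_1^{u_*^a}$. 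From $(\mathcal{R}_1^{u_*^a})'(\rho) = -a\ln(\rho/\rho_*^a)$ I would read off the monotonicity facts I need: $\mathcal{R}_1^{u_*^a}$ attains its maximum $q_*$ at $\rho = \rho_*^a$, satisfies $\mathcal{R}_1^{u_*^a}(\rho) \geq a\rho$ for $\rho \leq \rho_*^a$ (with equality only at $\rho_*^a$), and $\mathcal{R}_1^{u_*^a}(\rho) < a\rho$ for $\rho > \rho_*^a$.

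In the supersonic case $v_\ell > a$, one has $\overline{Q}(u_\ell) = q_\ell$, so (ii) becomes $q_\ell < q_*$; combined with $q_\ell > a\rho_\ell$, this forces $\rho_\ell < q_\ell/a < \rho_*^a$, placing $u_\ell$ in the first component of $\mathsf{C}_\ell$. Conversely, a supersonic $u_\ell$ cannot sit in the second component of $\mathsf{C}_\ell$, since there $q_\ell < \mathcal{R}_1^{u_*^a}(\rho_\ell) < a\rho_\ell$ would contradict $v_\ell > a$; and a supersonic $u_\ell$ with $\rho_\ell > \rho_*^a$ automatically satisfies $q_\ell > a\rho_\ell > q_*$, so (ii) fails there.

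For the subsonic case I would combine the displayed equivalence with the monotonicity facts above. When $\rho_\ell \leq \rho_*^a$ the chain $q_\ell \leq a\rho_\ell \leq \mathcal{R}_1^{u_*^a}(\rho_\ell) \leq q_*$ shows that both $q_\ell < \mathcal{R}_1^{u_*^a}(\rho_\ell)$ and $q_\ell < q_*$ reduce to $u_\ell \neq u_*^a$, matching the first component of $\mathsf{C}_\ell$; when $\rho_\ell > \rho_*^a$ the displayed equivalence is exactly the defining condition of the second component. The only point requiring care, and the main bookkeeping trap, is the corner $u_*^a$, where $\overline{Q}(u_*^a) = q_*$ (so (ii) fails) and $u_*^a \notin \mathsf{C}_\ell$ (since the first component demands the strict inequality $q < q_*$); this makes the strict-versus-non-strict inequalities consistent throughout.
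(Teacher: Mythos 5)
Your proof is correct, and the case split on the branches of $\overline{Q}$ together with the logarithmic manipulation reducing $\bar q(u_\ell)<q_*$ to $q_\ell<\mathcal{R}_1^{u_*^a}(\rho_\ell)$ is the natural argument. The paper itself states this lemma without proof (it is recalled from \cite{CR1}), so there is no in-text proof to compare against; in any case your treatment of the monotonicity of $\mathcal{R}_1^{u_*^a}$, the supersonic exclusions, and the boundary point $u_*^a$ is careful and complete.
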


We now introduce the states $u_*^{\sup}$ and $u_*^{\rm sub}$, see \figurename~\ref{f:CH2} on the left. Notice that $\mathsf{BL}_1^{u_*^0}=\mathsf{S}_1^{u_*^0}$ if $q\geq0$.
\begin{figure}[!htb]\centering
\begin{tikzpicture}[every node/.style={anchor=south west,inner sep=0pt},x=1mm, y=.5mm]
\node at (4,4) {\includegraphics[width=60mm]{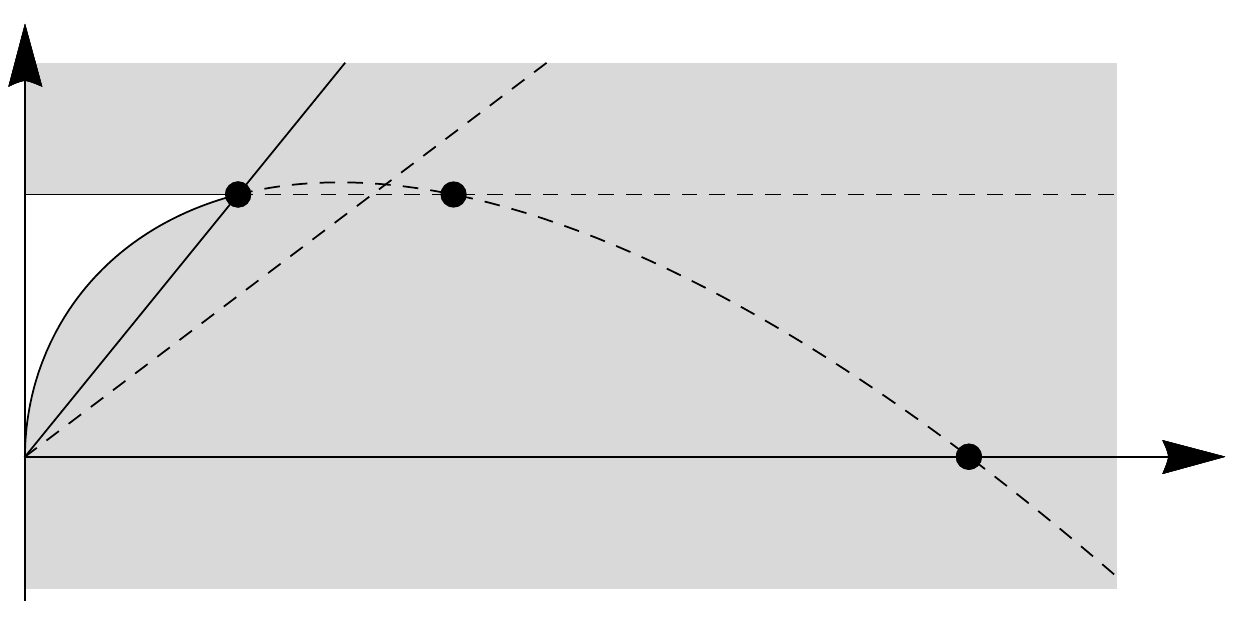}};
\node at (62,10) {\strut $\rho$};
\node at (0,58) {\strut $q$};
\node at (0,44) {\strut $q_*$};
\node at (51,20) {\strut $u_*^0$};
\node at (41,33) {\strut $\mathsf{BL}_1^{u_*^0}$};
\node at (10,46) {\strut $u_*^{\sup}$};
\node at (27,46) {\strut $u_*^{\rm sub}$};
\node at (14,58) {\strut $v=v_*^{\sup}$};
\node at (31,58) {\strut $v=a$};
\end{tikzpicture}
\qquad
\begin{tikzpicture}[every node/.style={anchor=south west,inner sep=0pt},x=1mm, y=.5mm]
\node at (4,4) {\includegraphics[width=60mm]{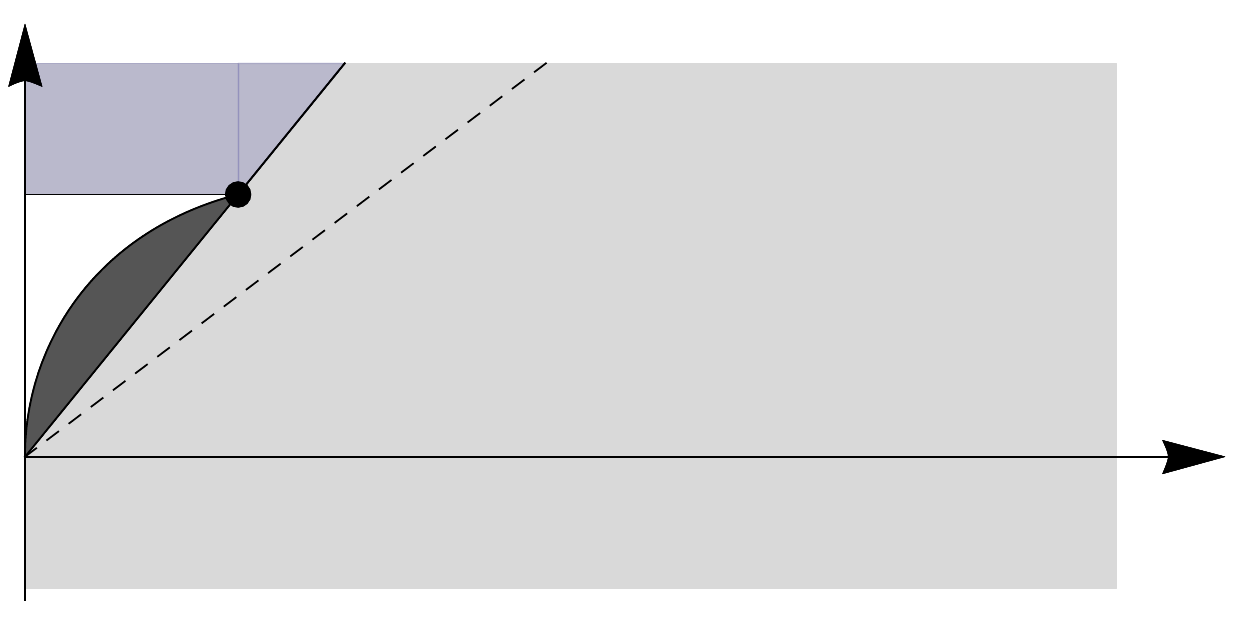}};
\node at (62,10) {\strut $\rho$};
\node at (0,58) {\strut $q$};
\node at (0,44) {\strut $q_*$};
\node at (6,23) {\strut \rotatebox{60}{\color{white}$\mathsf{CH}_{\ell,2}$}};
\node at (35,32) {\strut $\mathsf{CH}_{\ell,1}$};
\node at (7,48) {\strut $\mathsf{CH}_{\ell,3}$};
\node at (14,58) {\strut $v=v_*^{\sup}$};
\node at (31,58) {\strut $v=a$};
\draw[-latex] (3,38) node[left] {$\mathsf{CH}_\ell^{\scriptscriptstyle\complement}$} -- (7,40);
\end{tikzpicture}
\caption{Left: the shaded region represents the coherence domain $\mathsf{CH}_\ell$, the white region the incoherence domain $\mathsf{CH}^{\scriptscriptstyle\complement}_\ell$. Right: the decomposition of $\mathsf{CH}_\ell$ into the subsets $\mathsf{CH}_{\ell,1}$, $\mathsf{CH}_{\ell,2}$, $\mathsf{CH}_{\ell,3}$ given in \eqref{e:CH}.}
\label{f:CH2}
\end{figure}
It is easy to see that the curve $\mathsf{BL}_1^{u_*^0}$ intersects the line $q=q_*$ at the two points 
$u_*^{\sup} \doteq (\rho_*^{\sup},q_*)$ and $u_*^{\rm sub} \doteq (\rho_*^{\rm sub},q_*)$.
The state $u_*^{\sup}$ is supersonic, the state $u_*^{\rm sub}$ is subsonic with constant speeds
\begin{align}\label{e:DevinTownsend}
&v_*^{\sup} \approx 1.63 \cdot a,&
&v_*^{\rm sub} \approx 0.81 \cdot a.
\end{align}

The next theorem characterizes the incoherence domain $\mathsf{CH}^{\scriptscriptstyle\complement}$ of $\rsv$.
Since $Q_{\rm v}$ only depends on the upstream states, it is clear that 
\[
\mathsf{CH}=\mathsf{CH}_\ell\times \Omega, \qquad \mathsf{CH}^{\scriptscriptstyle\complement}=\mathsf{CH}_\ell^{\scriptscriptstyle\complement}\times \Omega,
\] 
where both $\mathsf{CH}_\ell \subseteq \Omega$ and $ \mathsf{CH}_\ell^{\scriptscriptstyle\complement} \doteq \Omega \setminus \mathsf{CH}_\ell$ only contain left states $u_\ell$. 

\begin{thrm}[Incoherence]
The incoherence domain of $\rsv$ is $\mathsf{CH}^{\scriptscriptstyle\complement} =\mathsf{CH}_\ell^{\scriptscriptstyle\complement}\times \Omega$, where 
\begin{align}\label{e:CHellC}
\mathsf{CH}_{\ell}^{\scriptscriptstyle\complement} &= 
\left\{
u \in \Omega : 
v>v_*^{\sup},\ \mathcal{S}_1^{u_*^0}(\rho)\leq q < q_*\right\}.
\end{align}
\end{thrm}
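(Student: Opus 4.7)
Plan: The key simplification is that $Q_{\rm v}$ in~\eqref{e:Blackfield} depends only on $u_\ell$, which reduces coherence at $(u_\ell,u_r)$ to verifying $Q_{\rm v}(u_{\rm v}^-)=Q_{\rm v}(u_\ell)$: once the two flow values agree, the equalities $\hat u(Q_{\rm v}(u_{\rm v}^-),u_{\rm v}^-)=u_{\rm v}^-$ and $\check u(Q_{\rm v}(u_{\rm v}^-),u_{\rm v}^+)=u_{\rm v}^+$ follow from the shape of the Lax curves at the base point, while if they disagree the new left trace has $q\neq q_{\rm v}^-$ and coherence fails. Consequently $\mathsf{CH}^{\scriptscriptstyle\complement}=\mathsf{CH}_\ell^{\scriptscriptstyle\complement}\times\Omega$ and the analysis reduces to $u_\ell$ alone.

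I first dispatch the open case $\overline Q(u_\ell)\geq q_*$ by showing $u_{\rm v}^-=\hat u(q_*,u_\ell)$ is never strictly supersonic: for subsonic or sonic $u_\ell$ the condition $\hat\rho\geq\bar\rho(u_\ell)$ from Remark~\ref{r:rem4} places $u_{\rm v}^-$ on the subsonic portion of $\mathsf{FL}_1^{u_\ell}$, while for supersonic $u_\ell$ a short computation of $d\mathcal{S}_1^{u_\ell}/d\rho$ shows the 1-shock branch has its unique $q$-maximum at the sonic state and is subsonic beyond it. Since $\bar q(u)\geq q(u)$ whenever $v(u)\leq a$ (a short algebraic check), this gives $\overline Q(u_{\rm v}^-)\geq q_*$ and hence $Q_{\rm v}(u_{\rm v}^-)=q_*$; coherence follows.

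The closed case $u_\ell\in\mathsf{C}_\ell$ is where incoherence arises. Here $u_{\rm v}^-=(\rho_{\rm v}^-,0)$ with $v=0<a$, so $\overline Q(u_{\rm v}^-)=a\rho_{\rm v}^-/e$, and incoherence amounts to $\rho_{\rm v}^-\geq\rho_*^0$. A direct computation shows that $\rho'\mapsto\mathcal{BL}_1^{(\rho',0)}(\rho)$ is strictly increasing for each fixed $\rho>0$, so $u_\ell$ determines $\rho_{\rm v}^-$ uniquely via $u_\ell\in\mathsf{BL}_1^{(\rho_{\rm v}^-,0)}$, and $\rho_{\rm v}^-\geq\rho_*^0$ rewrites as $q_\ell\geq\mathcal{S}_1^{u_*^0}(\rho_\ell)$ if $\rho_\ell<\rho_*^0$, and as $q_\ell\geq\mathcal{R}_1^{u_*^0}(\rho_\ell)$ if $\rho_\ell\geq\rho_*^0$. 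The latter is ruled out by the identity $\mathcal{R}_1^{u_*^0}\equiv\mathcal{R}_1^{u_*^a}$ on $[\rho_*^a,\infty)$ (since $u_*^0$ and $u_*^a$ share the 1-Riemann invariant $v+a\ln\rho$): it would contradict the defining condition $q_\ell<\mathcal{R}_1^{u_*^a}(\rho_\ell)$ of $\mathsf{C}_\ell$ from Lemma~\ref{l:closedvalve}.

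On the remaining range $\rho_\ell\in(0,\rho_*^0)$, I localize further using $q_\ell\leq\overline Q(u_\ell)<q_*$. The subinterval $(\rho_*^{\sup},\rho_*^a]$ is excluded because $\mathcal{S}_1^{u_*^0}>q_*$ there (the ``hump'' between the two crossings $u_*^{\sup}$ and $u_*^{\rm sub}$ of the level $q=q_*$), and the subinterval $(\rho_*^a,\rho_*^0)$ is excluded by $\mathcal{S}_1^{u_*^0}>\mathcal{R}_1^{u_*^0}=\mathcal{R}_1^{u_*^a}$ combined with the $\mathsf{C}_\ell$ condition. So $\rho_\ell\in(0,\rho_*^{\sup})$, where $\rho\mapsto\mathcal{S}_1^{u_*^0}(\rho)/\rho$ is strictly decreasing down to $v_*^{\sup}$; hence $q_\ell\geq\mathcal{S}_1^{u_*^0}(\rho_\ell)$ forces $v_\ell>v_*^{\sup}$. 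This yields~\eqref{e:CHellC}, and the converse inclusion is obtained by reversing the argument. The main obstacle is the precise geometric bookkeeping required to identify and rule out the various subintervals of $\rho$, in particular the interplay between the hump of $\mathsf{S}_1^{u_*^0}$, the rarefaction identity $\mathcal{R}_1^{u_*^0}=\mathcal{R}_1^{u_*^a}$, and the boundary of $\mathsf{C}_\ell$.
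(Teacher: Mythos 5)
Your overall strategy is sound: since $Q_{\rm v}$ depends only on the left state, coherence at $(u_\ell,u_r)$ does come down to $Q_{\rm v}(u_{\rm v}^-)=Q_{\rm v}(u_\ell)$, provided one also checks the fixed-point equalities $\hat u(Q,u_{\rm v}^-)=u_{\rm v}^-$ and $\check u(Q,u_{\rm v}^+)=u_{\rm v}^+$; the closed-case analysis using the nesting of $\rho'\mapsto\mathcal{BL}_1^{(\rho',0)}$, the identity $\mathcal{R}_1^{u_*^0}\equiv\mathcal{R}_1^{u_*^a}$ (shared $1$-Riemann invariant), the hump of $\mathsf{S}_1^{u_*^0}$ between $u_*^{\sup}$ and $u_*^{\rm sub}$, and the monotonicity of $\rho\mapsto\mathcal{S}_1^{u_*^0}(\rho)/\rho$ correctly recovers \eqref{e:CHellC}. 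The paper recalls this theorem from \cite{CR1} without a proof, so I evaluate your argument on its own merits.

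There is, however, a concrete false intermediate claim in the open case. You assert that for supersonic $u_\ell$ the $q$-maximum of the $1$-shock branch $\mathcal{S}_1^{u_\ell}$ sits at a \emph{sonic} state. This is false: writing $\mathcal{S}_1^{u_\ell}(\rho)=\rho\,v(\rho)$ with $v(\rho)=v_\ell - a\bigl(\sqrt{\rho/\rho_\ell}-\sqrt{\rho_\ell/\rho}\bigr)$, one finds
\[
\frac{\d\mathcal{S}_1^{u_\ell}}{\d\rho}
= v(\rho) - \frac{a}{2}\Bigl(\sqrt{\rho/\rho_\ell}+\sqrt{\rho_\ell/\rho}\Bigr),
\]
so at the critical point $v(\rho)=\tfrac{a}{2}\bigl(\sqrt{\rho/\rho_\ell}+\sqrt{\rho_\ell/\rho}\bigr)\geq a$, with equality only at $\rho=\rho_\ell$. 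Hence for supersonic $u_\ell$ (where the maximum lies at $\rho>\rho_\ell$) the maximizing state is strictly supersonic, not sonic; the sonic-maximum property of $\bar u(u_\ell)$ holds only when $u_\ell$ is subsonic, as the paper uses in the proof of Proposition~\ref{p:subsonic}(i).

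The conclusion you need — that $u_{\rm v}^-=\hat u(q_*,u_\ell)$ is never strictly supersonic when $\overline{Q}(u_\ell)\geq q_*$ — is nevertheless true, and this fact is genuinely required: your reduction to $Q_{\rm v}(u_{\rm v}^-)=Q_{\rm v}(u_\ell)$ presupposes $\hat u(Q,u_{\rm v}^-)=u_{\rm v}^-$, which holds precisely because the $1$-shock curve emanating from $u_{\rm v}^-$ is strictly decreasing for $\rho>\rho_{\rm v}^-$, i.e.\ because $v_{\rm v}^-\leq a$. A correct justification in the supersonic case is direct: solving $\mathcal{S}_1^{u_\ell}(\rho)=q_\ell$ for $\rho\neq\rho_\ell$ gives $\rho=\rho_\ell (v_\ell/a)^2$, hence $\hat u(q_\ell,u_\ell)$ has velocity $a^2/v_\ell<a$, and since $\hat u(q_*,u_\ell)$ for $q_*\leq q_\ell=\overline{Q}(u_\ell)$ lies at even larger $\rho$ on a decreasing branch, its velocity is smaller still. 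You should replace the ``sonic $q$-maximum'' claim by this computation; with that repair the proof is complete.
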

\noindent
We refer to \figurename~\ref{f:CH2} on the left for a representation of $\mathsf{CH}_{\ell}^{\scriptscriptstyle\complement}$. 
By \eqref{e:CHellC} we deduce $\mathsf{CH}_\ell = \mathsf{CH}_{\ell,1}\cup\mathsf{CH}_{\ell,2}\cup\mathsf{CH}_{\ell,3}$ where, see \figurename~\ref{f:CH2} on the right,
\begin{align}\label{e:CH}
\mathsf{CH}_{\ell,1} &\doteq \bigl\{u\in \Omega : v\leq v_*^{\sup}\bigr\},
&
\mathsf{CH}_{\ell,2} & \doteq \bigl\{u\in \Omega : v> v_*^{\sup},\ q < \mathcal{S}_1^{u_*^0}(\rho) \bigr\},
&
\mathsf{CH}_{\ell,3} & \doteq \bigl\{u\in \Omega : v> v_*^{\sup},\ q\geq q_*\bigr\}.
\end{align}

Notice that $\mathsf{CH}_{\ell,1}$ is independent of $q_*$ by the definition \eqref{e:DevinTownsend} of $v_*^{\sup}$. We now show that if $\rsv$ is not coherent at $(u_\ell,u_r)$ then the valve is closed.

\begin{crllr}\label{c:SnarkyPuppy}
We have $\mathsf{CH}_\ell^{\scriptscriptstyle\complement} \subset \mathsf{C}_\ell$ and $\rsv[\mathsf{CH}_\ell^{\scriptscriptstyle\complement},\Omega](0^-) \subseteq \Omega \setminus \mathsf{C}_\ell \subset \mathsf{CH}_\ell$.
\end{crllr}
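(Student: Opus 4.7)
My plan is to split the statement into three claims: $\mathsf{CH}_\ell^{\scriptscriptstyle\complement}\subset\mathsf{C}_\ell$, the trace containment $\rsv[\mathsf{CH}_\ell^{\scriptscriptstyle\complement},\Omega](0^-)\subseteq\Omega\setminus\mathsf{C}_\ell$, and $\Omega\setminus\mathsf{C}_\ell\subset\mathsf{CH}_\ell$. The third is just the contrapositive of the first taken within the set of left states, so only the first two really need proof.

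For $\mathsf{CH}_\ell^{\scriptscriptstyle\complement}\subset\mathsf{C}_\ell$ I would simply read off from \eqref{e:CHellC} that any $u_\ell\in\mathsf{CH}_\ell^{\scriptscriptstyle\complement}$ satisfies $v_\ell>v_*^{\sup}>a$ and $q_\ell<q_*$. Supersonicity fed into \eqref{e:AliceinChains} yields $\overline{Q}(u_\ell)=q_\ell<q_*$, and the characterization of closed states in Lemma~\ref{l:closedvalve} places $u_\ell$ in $\mathsf{C}_\ell$.

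For the trace containment, fix $u_\ell\in\mathsf{CH}_\ell^{\scriptscriptstyle\complement}$ and $u_r\in\Omega$. By the previous step the valve is closed, so $Q_{\rm v}(u_\ell)=0$ and the auxiliary state appearing in \eqref{e:rsc} is $\hat u=\hat u(0,u_\ell)$, the unique intersection of $\mathsf{FL}_1^{u_\ell}$ with $\{q=0\}$; using \eqref{e:SRi} together with $v_\ell>0$ one sees that this intersection lies on the $1$-shock branch and has the form $\hat u=(\hat\rho,0)$ with $\hat\rho>\rho_\ell$ uniquely determined by $v_\ell = a\bigl(\sqrt{\hat\rho/\rho_\ell}-\sqrt{\rho_\ell/\hat\rho}\bigr)$. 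The Rankine-Hugoniot condition \eqref{e:RH1} gives a shock speed $s=-q_\ell/(\hat\rho-\rho_\ell)<0$, so the trace of $\rsv[u_\ell,u_r]$ at $\xi=0^-$ is exactly $\hat u$.

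What remains, and is where the real work lies, is to verify $\hat u\notin\mathsf{C}_\ell$. Using $v_*^0=0$ and expanding via \eqref{e:SRi}, the defining inequality $q_\ell\geq\mathcal{S}_1^{u_*^0}(\rho_\ell)$ of $\mathsf{CH}_\ell^{\scriptscriptstyle\complement}$ rewrites as $v_\ell/a\geq\sqrt{\rho_*^0/\rho_\ell}-\sqrt{\rho_\ell/\rho_*^0}$; since $y\mapsto y-1/y$ is strictly increasing on $(0,\infty)$, comparison with the defining equation for $\hat\rho$ gives $\sqrt{\hat\rho/\rho_\ell}\geq\sqrt{\rho_*^0/\rho_\ell}$, i.e.\ $\hat\rho\geq\rho_*^0$. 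Then $\hat\rho\geq\rho_*^0>\rho_*^a$ moves $\hat u$ into the second region of \eqref{e:CL}, and since $\mathcal{R}_1^{u_*^a}(\hat\rho)=a\hat\rho\bigl(1-\ln(\hat\rho/\rho_*^a)\bigr)\leq 0=\hat q$ (using $\hat\rho\geq e\rho_*^a$), the strict inequality in \eqref{e:CL} fails at $\hat u$, so $\hat u\in\Omega\setminus\mathsf{C}_\ell$. The crux of the argument is the algebraic identification of the boundary condition $u_\ell\in\mathsf{S}_1^{u_*^0}$ with the equality $\hat\rho=\rho_*^0$; once this is in hand, monotonicity of $y\mapsto y-1/y$ does the rest.
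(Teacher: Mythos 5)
Your proof is correct, and since the paper states this as a corollary without giving its own argument, your derivation fills a genuine gap. Each step checks out: the containment $\mathsf{CH}_\ell^{\scriptscriptstyle\complement}\subset\mathsf{C}_\ell$ follows immediately from $v_\ell>v_*^{\sup}>a$ together with \eqref{e:AliceinChains} and Lemma~\ref{l:closedvalve}; the trace at $0^-$ is indeed $\hat u(0,u_\ell)$, lying on the $1$-shock branch with a strictly negative speed since $q_\ell>0$; and the monotonicity of $y\mapsto y-1/y$ correctly upgrades the inequality $\mathcal{S}_1^{u_*^0}(\rho_\ell)\leq q_\ell$ to $\hat\rho\geq\rho_*^0=e\,\rho_*^a$, from which $\hat u\notin\mathsf{C}_\ell$ follows by checking both branches of \eqref{e:CL}. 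The reduction of the third inclusion to the contrapositive of the first is also fine.

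It is worth observing that the trace containment has a shorter, more structural proof that sidesteps the shock-curve algebra entirely. Suppose $u_\ell\in\mathsf{CH}_\ell^{\scriptscriptstyle\complement}$ and, for contradiction, that $u_{\rm v}^-=\hat u(0,u_\ell)\in\mathsf{C}_\ell$. Both traces $u_{\rm v}^\pm$ have zero momentum, so they are each their own hat and check states: $\hat u(0,u_{\rm v}^-)=u_{\rm v}^-$ and $\check u(0,u_{\rm v}^+)=u_{\rm v}^+$. Since $u_{\rm v}^-\in\mathsf{C}_\ell$ gives $Q_{\rm v}(u_{\rm v}^-)=0$, the solver $\rsv[u_{\rm v}^-,u_{\rm v}^+]$ reduces to the piecewise-constant function in \eqref{chv}, i.e.\ $\rsv$ is coherent at $(u_\ell,u_r)$, contradicting $u_\ell\in\mathsf{CH}_\ell^{\scriptscriptstyle\complement}$. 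This route uses only the fixed-point meaning of coherence and Lemma~\ref{l:closedvalve}, while your route gives a direct algebraic verification and in the process re-derives the useful estimate $\hat\rho\geq\rho_*^0$; both are legitimate, and they illuminate the corollary from complementary angles.
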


In other words, Corollary~\ref{c:SnarkyPuppy} means that if $(u_\ell,u_r) \in \mathsf{CH}^{\scriptscriptstyle\complement}= \mathsf{CH}_\ell^{\scriptscriptstyle\complement}\times\Omega$, then in the solution $u_{\rm v} = \rsv[u_\ell,u_r]$ the valve is closed, while in the solution $\rsv[u_{\rm v}^-,u_{\rm v}^+]$ the valve is open and then $(u_{\rm v}^-,u_{\rm v}^+) \in \mathsf{CH} = \mathsf{CH}_\ell \times \Omega$; as a consequence by \eqref{chv} we have $\rsv[u_{\rm v}^-,u_{\rm v}^+](0^\pm) = u_{\rm v}^\pm$.

\section{The coherent c-Riemann solver \texorpdfstring{$\rsh$}{}}
\label{s:cova}

A drawback of incoherence is that it leads to instabilities. For instance, 
numerical solutions obtained by exploiting $\rsv$ at each time step may substantially differ from the {\em exact} solution $u_{\rm v}$ in correspondence of incoherent initial data, see for instance the first two columns in \figurename~\ref{f:comparison:RSh_RSv}. This difficulty motivates the design of a new valve, which reproduces the behavior of the valve modeled in Definition \ref{d:Blackfield} for coherent initial data but that gives rise to a coherent solver.

We introduce such a valve in Definition~\ref{d:rsv0} through its Riemann solver $\rsh$ (\lq\lq h\rq\rq\ for co{\em h}erent). The solver $\rsh$, roughly speaking, is uniquely determined by the following conditions:

\begin{enumerate}[label={{(\Roman*)}},leftmargin=*]

\item $\rsh$ is coherent in the whole of $\mathsf{D} \doteq \Omega\times\Omega$;

\item $\rsh$ coincides with $\rsv$ in the coherence region $\mathsf{CH}$ of $\rsv$;

\item if $(u_\ell,u_r) \in \mathsf{CH}^{\scriptscriptstyle\complement}$, then $\rsh[u_\ell,u_r]$ maximizes the flow across $x=0$, that is $q(\rsh[u_\ell,u_r](0)) \geq q(\rsc[u_\ell,u_r](0))$ for any c-Riemann solver $\rsc$.
\end{enumerate}
As we commented in the Introduction, condition (III) resembles an entropy condition. It has been already exploited in the framework of gas networks, see for instance \cite[(28)]{Banda-Herty-Klar2}, \cite[(15a)]{Banda-Herty-Klar1}.

Because of (II), the issue is then how to define $\rsh$ in $\mathsf{CH}^{\scriptscriptstyle\complement}$. A hint comes from \cite[\S6]{CR2}, where a valve with a reaction time and based on $\rsv$ is considered. For $(u_\ell,u_r) \in \mathsf{CH}^{\scriptscriptstyle\complement}$, a solution is constructed there by applying a front-tracking algorithm. Rather surprisingly, the reaction time leads to the periodic appearance of a flow $q_\ell$ at $x=0$, even if $q_\ell$ differs from both $0$ and $q_*$.

We are then led to prescribe a new value $Q=Q_{\rm h}$ of the flow at $x=0$, which equals $q_\ell$ in the incoherent region $\mathsf{CH}^{\scriptscriptstyle\complement} = \mathsf{CH}_\ell^{\scriptscriptstyle\complement} \times \Omega$ of $\rsv$, see \eqref{e:CHellC}, and coincides with $Q_{\rm v}$ in $\mathsf{CH}$, as stated in the following definition.

\begin{dfntn}
\label{d:rsv0}
We denote by $\rsh$ the c-Riemann solver corresponding to 
\begin{equation}\label{e:rsv0}
Q_{\rm h}(u_\ell) \doteq 
\begin{cases}
q_*&\hbox{ if } \overline{Q}(u_\ell)\geq q_*,
\\
0&\hbox{ if }\overline{Q}(u_\ell)<q_* \hbox{ and } u_\ell \in \mathsf{CH}_\ell,
\\
q_\ell&\hbox{ if }u_\ell \in \mathsf{CH}_\ell^{\scriptscriptstyle\complement},
\end{cases}
\end{equation}
where $\overline{Q}$ is defined as in \eqref{e:AliceinChains} and $\mathsf{CH}_\ell^{\scriptscriptstyle\complement} \times \Omega$ is the incoherence domain of $\rsv$, see \eqref{e:CHellC}.
\end{dfntn}

\begin{rmrk}
Observe that if $\overline{Q}(u_\ell)\geq q_*$ then $u_\ell \in \mathsf{CH}_\ell$, or equivalently, if $u_\ell \in \mathsf{CH}_\ell^{\scriptscriptstyle\complement}$ then $\overline{Q}(u_\ell)<q_*$.
\end{rmrk}

Numerical simulations based on $\rsh$ reproduce the {\em same} effect on the gas flow of $\rsv$ after the chattering, at least in the case considered in \figurename~\ref{f:comparison:RSh_RSv}, see the last two columns. We currently miss of a rigorous proof of this fact. 

\begin{figure}[!htb]\centering
\begin{subfigure}[(t)]{50mm}
\begin{tikzpicture}[every node/.style={anchor=south west,inner sep=0pt},x=1mm, y=1mm]
\node at (0,0) {\includegraphics[width=50mm,trim=72.5mm 195.5mm 73.5mm 46mm, clip]{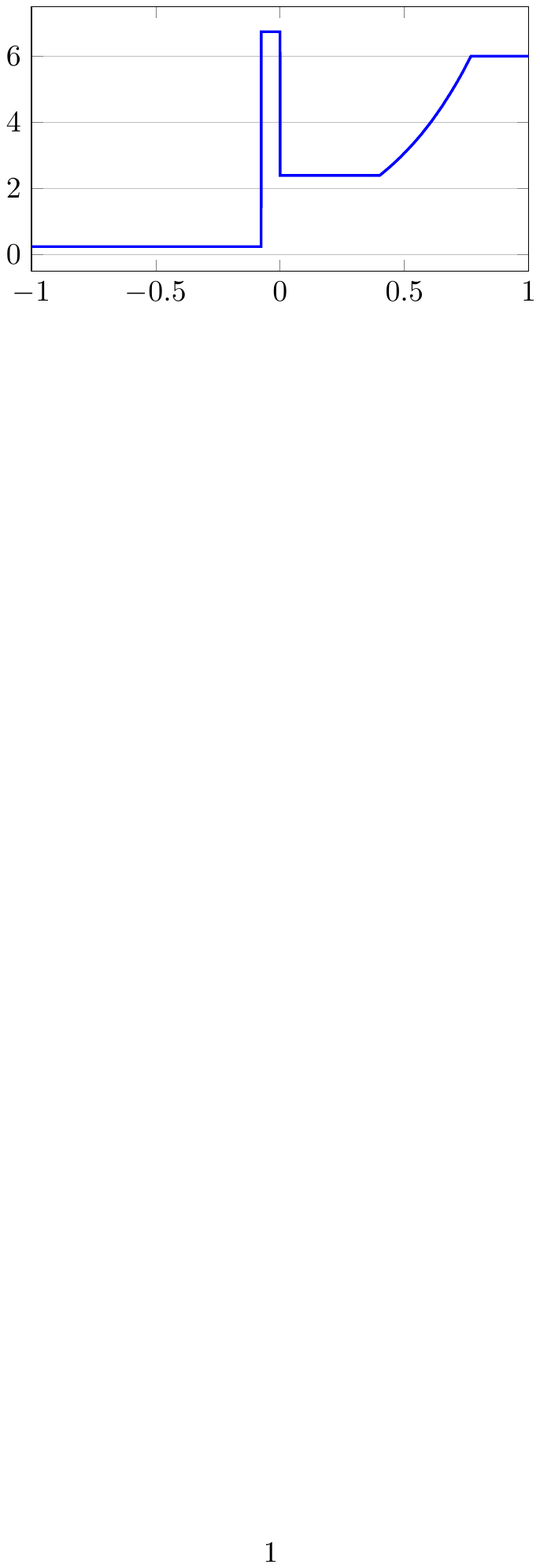}};
\end{tikzpicture}
\caption*{$x\mapsto\rho_\Delta(0.2,x)$}
\end{subfigure}
\hspace{5mm}
\begin{subfigure}[(t)]{50mm}
\begin{tikzpicture}[every node/.style={anchor=south west,inner sep=0pt},x=1mm, y=1mm]
\node at (0,0) {\includegraphics[width=50mm,trim=72.5mm 195.5mm 73.5mm 46mm, clip]{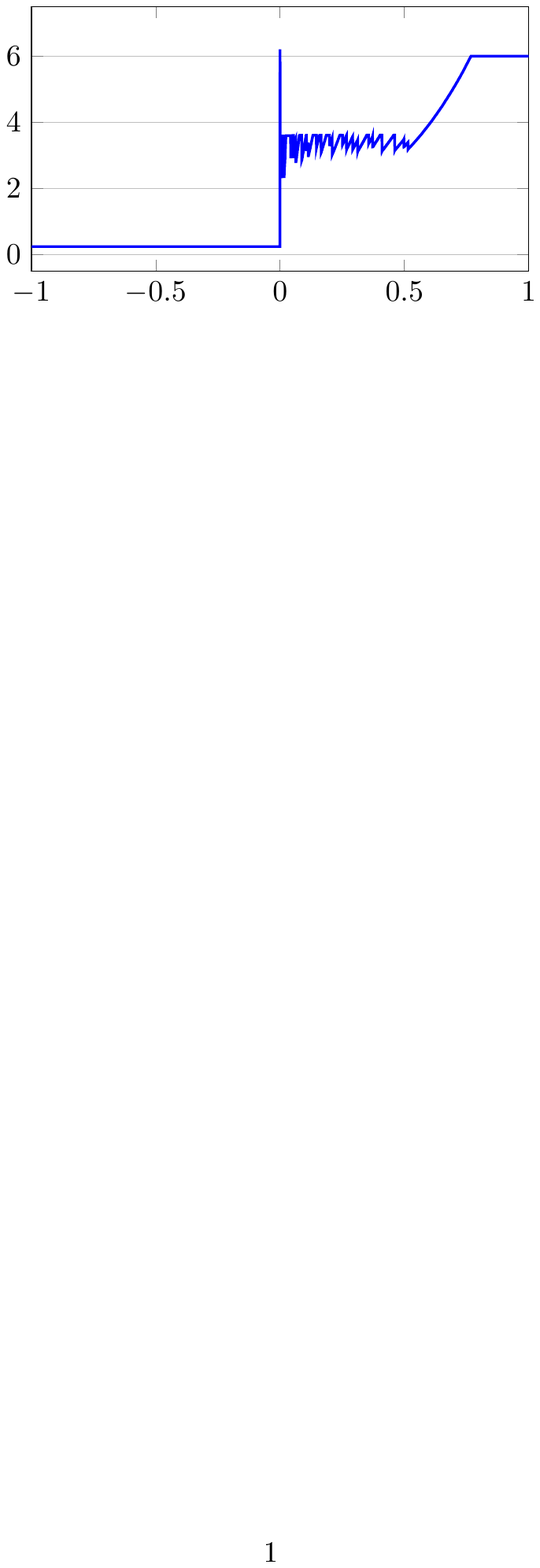}};
\end{tikzpicture}
\caption*{$x\mapsto\rho_\Delta(0.2,x)$}
\end{subfigure}
\hspace{5mm}
\begin{subfigure}[(t)]{50mm}
\begin{tikzpicture}[every node/.style={anchor=south west,inner sep=0pt},x=1mm, y=1mm]
\node at (0,0) {\includegraphics[width=50mm,trim=72.5mm 195.5mm 73.5mm 46mm, clip]{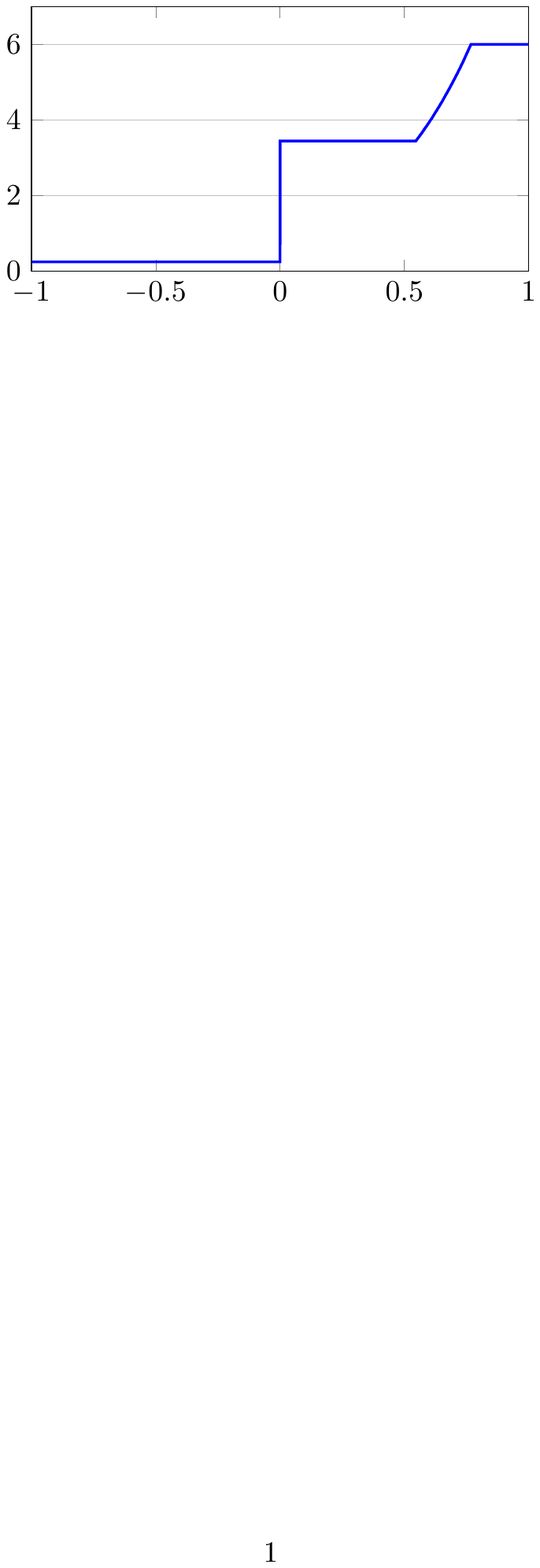}};
\end{tikzpicture}
\caption*{$x\mapsto\rho_\Delta(0.2,x)$}
\end{subfigure}
\\[10pt]
\begin{subfigure}[(t)]{50mm}
\begin{tikzpicture}[every node/.style={anchor=south west,inner sep=0pt},x=1mm, y=1mm]
\node at (0,0) {\includegraphics[width=50mm,trim=72.5mm 195.5mm 73.5mm 46mm, clip]{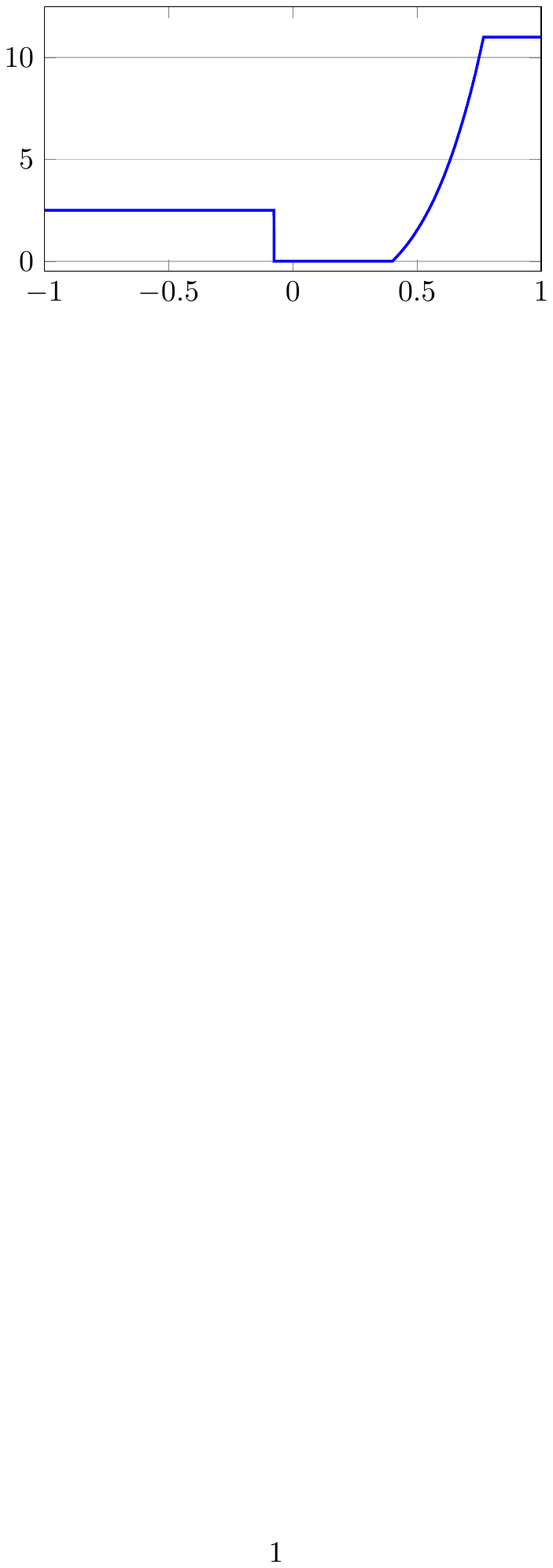}};
\end{tikzpicture}
\caption*{$x\mapsto q_\Delta(0.2,x)$}
\end{subfigure}
\hspace{5mm}
\begin{subfigure}[(t)]{50mm}
\begin{tikzpicture}[every node/.style={anchor=south west,inner sep=0pt},x=1mm, y=1mm]
\node at (0,0) {\includegraphics[width=50mm,trim=72.5mm 195.5mm 73.5mm 46mm, clip]{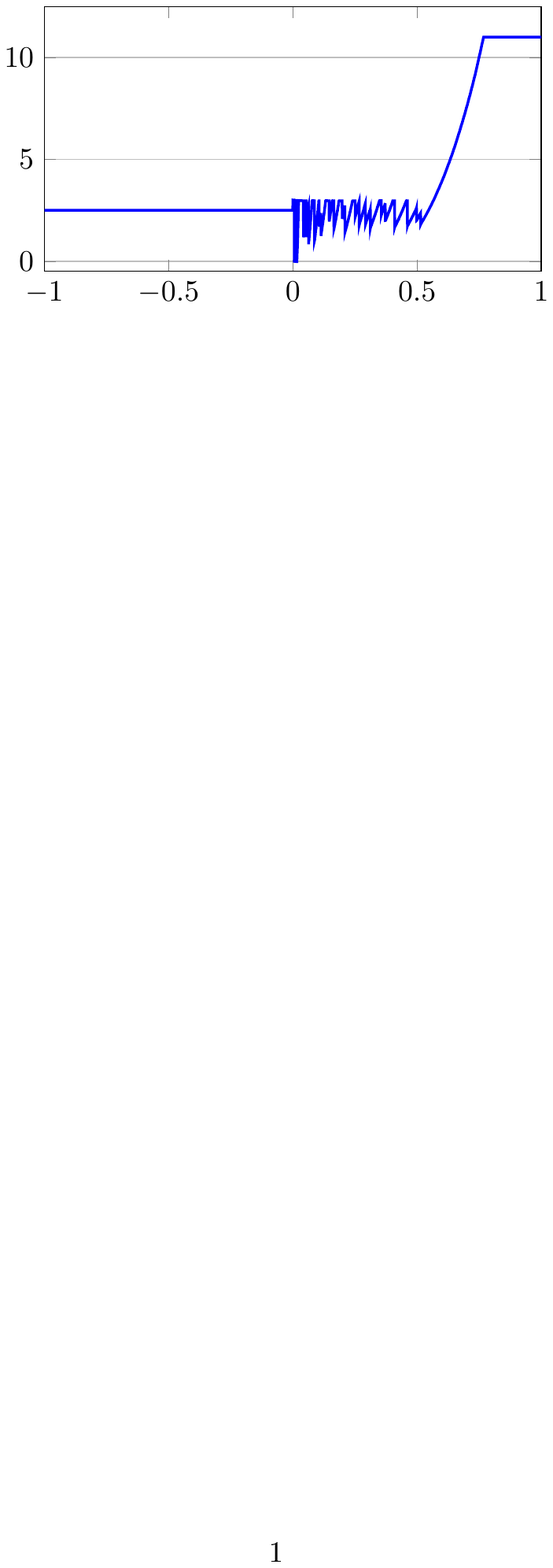}};
\end{tikzpicture}
\caption*{$x\mapsto q_\Delta(0.2,x)$}
\end{subfigure}
\hspace{5mm}
\begin{subfigure}[(t)]{50mm}
\begin{tikzpicture}[every node/.style={anchor=south west,inner sep=0pt},x=1mm, y=1mm]
\node at (0,0) {\includegraphics[width=50mm,trim=72.5mm 195.5mm 73.5mm 46mm, clip]{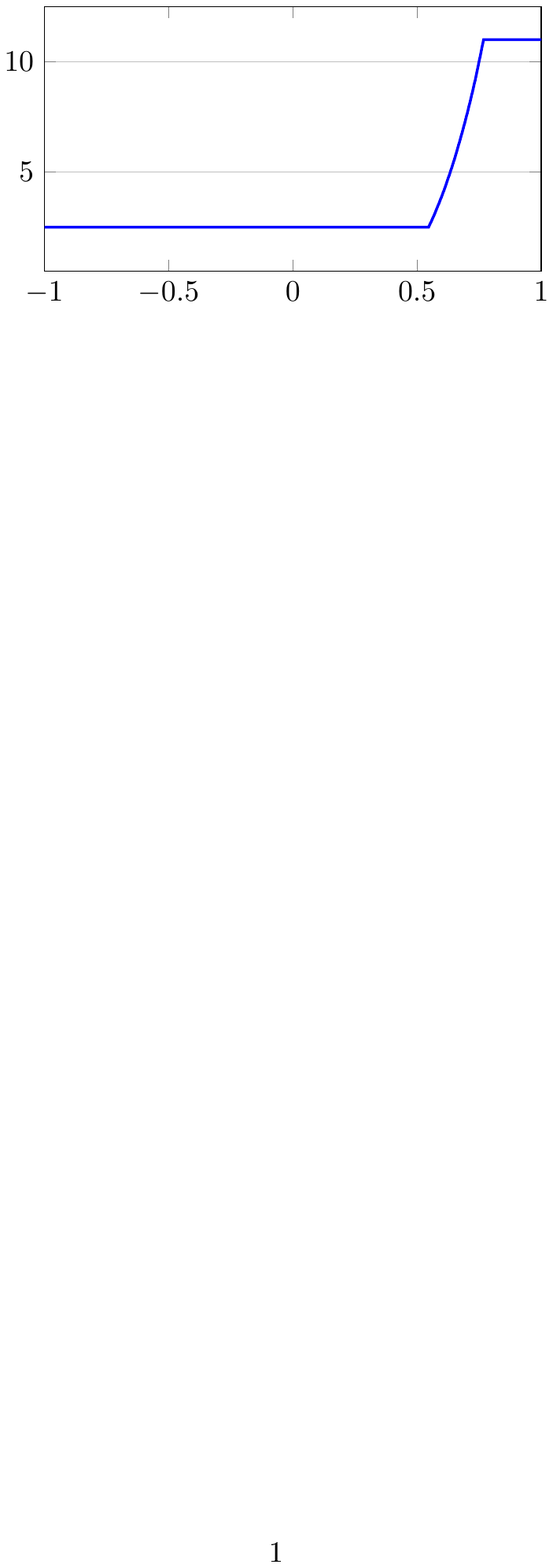}};
\end{tikzpicture}
\caption*{$x\mapsto q_\Delta(0.2,x)$}
\end{subfigure}
\caption{Different numerical simulations for $u(t,x) \doteq \rsv[u_\ell,u_r](x/t)$. Left column: we computed $Q_{\rm v}(u_\ell)$ and kept it as flow through the valve for any time. Center column: we applied $\rsv$ at $x=0$ at each time step, namely, we updated the flow through the valve according to the left traces of the solution computed at each time step. Right column: $u(t,x) \doteq \rsh[u_\ell,u_r](x/t)$.
Here $\rho_\ell=0.25$, $q_\ell=2.5$, $\rho_r=6$, $q_r=11$, $a=2$ and $q_*=3$, so that $u_\ell \in \mathsf{CH}_\ell^{\scriptscriptstyle\complement}$.}
\label{f:comparison:RSh_RSv}
\end{figure}

By Corollary~\ref{c:SnarkyPuppy} we have $\mathsf{CH}_\ell^{\scriptscriptstyle\complement} \subset \mathsf{C}_\ell$, hence $\mathsf{C}_\ell^{\scriptscriptstyle\complement} = \mathsf{CH}_\ell \setminus \mathsf{C}_\ell$.
This, and Lemma~\ref{l:closedvalve} \ref{ii}, implies
\begin{align}
\label{e:KingCrimson}
&\left\{ u_\ell \in \Omega : \overline{Q}(u_\ell) < q_* \right\} = \mathsf{C}_\ell,&
&\left\{ u_\ell \in \Omega : \overline{Q}(u_\ell)\geq q_* \right\} = \mathsf{CH}_\ell \setminus \mathsf{C}_\ell,
\end{align}
and therefore
\begin{equation}\label{e:rsv00}
Q_{\rm h}(u_\ell) =
\begin{cases}
q_*&\hbox{ if } u_\ell \in \mathsf{CH}_\ell \setminus \mathsf{C}_\ell,
\\
0&\hbox{ if } u_\ell \in \mathsf{CH}_\ell \cap \mathsf{C}_\ell,
\\
q_\ell&\hbox{ if } u_\ell \in \mathsf{CH}_\ell^{\scriptscriptstyle\complement}.
\end{cases}
\end{equation}

Now, we collect the main properties of $\rsh$. About (III), we notice that it is a consequence of the explicit definition \eqref{e:rsv0} and not, as in \cite{Banda-Herty-Klar2, Banda-Herty-Klar1}, an implicit consequence of a maximization process.

\begin{prpstn}
\label{p:argh}
For any $u_\ell,u_r\in\Omega$ the following holds:
\begin{enumerate}[label={{(\roman*)}},leftmargin=*,nolistsep]\setlength{\itemsep}{0cm}\setlength\itemsep{0em}

\item if $(u_\ell,u_r) \in \mathsf{CH}_\ell\times\Omega$, then
\begin{equation}
\label{e:prop1}
\rsh[u_\ell,u_r] \,\equiv\, \rsv[u_\ell,u_r];
\end{equation}

\item if $(u_\ell,u_r) \in \mathsf{CH}_\ell^{\scriptscriptstyle\complement}\times\Omega$, then
\begin{align}
\label{e:prop2}
&\rsh[u_\ell,u_r](\xi) =
\begin{cases}
u_\ell&\hbox{ if }\xi <0,\\
\rsp[\check{u}(q_\ell,u_r),u_r](\xi)&\hbox{ if }\xi\geq0,
\end{cases}
\intertext{and for every c-Riemann solver $\rsc$ we have}
\label{e:prop3}
&q( \rsc[u_\ell,u_r] )(0) \leq q( \rsh[u_\ell,u_r] )(0) \in (0,q_*);
\end{align}

\item $\rsh$ is coherent in $\mathsf{D}$.
\end{enumerate}
\end{prpstn}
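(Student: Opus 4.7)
The plan is to exploit the explicit form \eqref{e:rsv00} of $Q_{\rm h}$ together with the one-to-one correspondence (Definition~\ref{d:RSV}) between a c-Riemann solver and its defining flow function. I treat the three items in order, reducing (iii) to (i)--(ii).

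For (i), I would simply compare \eqref{e:Blackfield} with \eqref{e:rsv00}: on $\mathsf{CH}_\ell$ the third case of \eqref{e:rsv00} is vacuous and the first two cases coincide with $Q_{\rm v}$, so $Q_{\rm h}\equiv Q_{\rm v}$ on $\mathsf{CH}_\ell$ and \eqref{e:prop1} is immediate from \eqref{e:rsc}.

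For (ii), let $(u_\ell,u_r)\in \mathsf{CH}_\ell^{\scriptscriptstyle\complement}\times\Omega$. I would first read off from \eqref{e:CHellC} that $v_\ell>v_*^{\sup}>a$, so that $u_\ell$ is supersonic, $\overline{Q}(u_\ell)=q_\ell$, and $q_\ell\in(0,q_*)$; the definition \eqref{e:rsv00} then gives $Q_{\rm h}(u_\ell)=q_\ell$. The key computation is the identification of $\hat u(q_\ell,u_\ell)$: the level $q=q_\ell$ meets $\mathsf{FL}_1^{u_\ell}$ at $u_\ell$ itself and, on the 1-shock branch, at the sonic-shock state $(\rho_\ell(v_\ell/a)^2,q_\ell)$ where the shock speed vanishes. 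This follows from inspecting $\mathcal{S}_1^{u_\ell}$, which starts at $q_\ell$ with positive slope $v_\ell-a$, reaches a maximum, and decreases through $q_\ell$ again precisely at the sonic-shock state, while $\mathcal{R}_1^{u_\ell}$ stays strictly below $q_\ell$ for $\rho<\rho_\ell$. The sonic-shock state has the larger $\rho$-coordinate, so it is $\hat u(q_\ell,u_\ell)$; the associated 1-shock is stationary, hence $\rsp[u_\ell,\hat u(q_\ell,u_\ell)](\xi)=u_\ell$ for every $\xi<0$, and \eqref{e:prop2} follows from \eqref{e:rsc}. The trace at $0^+$ is $\check u(q_\ell,u_r)$ by Remark~\ref{r:rem4} \ref{itemmm}, so $q(\rsh[u_\ell,u_r])(0)=q_\ell\in(0,q_*)$; the maximization in \eqref{e:prop3} is then a direct consequence of Definition~\ref{d:RSV}, since any c-Riemann solver $\rsc$ satisfies $q(\rsc[u_\ell,u_r])(0)=Q(u_\ell,u_r)\leq \overline{Q}(u_\ell)=q_\ell$.

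For (iii), on $\mathsf{CH}_\ell\times\Omega$ the coherence of $\rsh$ is inherited from that of $\rsv$ via \eqref{e:prop1}. On $\mathsf{CH}_\ell^{\scriptscriptstyle\complement}\times\Omega$, \eqref{e:prop2} gives $u_{\rm h}^-=u_\ell$ and $u_{\rm h}^+=\check u(q_\ell,u_r)$; since $u_\ell\in \mathsf{CH}_\ell^{\scriptscriptstyle\complement}$ is unchanged, applying (ii) to the new datum $(u_\ell,\check u(q_\ell,u_r))$ reduces the coherence identity \eqref{chv} to checking that $\check u(q_\ell,\check u(q_\ell,u_r))=\check u(q_\ell,u_r)$. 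The latter I would verify by monotonicity on $\mathcal{BL}_2^{\check u(q_\ell,u_r)}$: on the rarefaction branch the derivative at $\check\rho$ equals $v+a>0$ and $q=q_\ell$ is attained only at $\check u(q_\ell,u_r)$, and on the shock branch an elementary sign computation yields that the curve stays strictly above $q_\ell$, so $\check u(q_\ell,u_r)$ itself is the rightmost intersection with $q=q_\ell$.

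I expect the main obstacle to lie in the bookkeeping of part (ii): pinning down $\hat u(q_\ell,u_\ell)$ as the sonic-shock state (and not $u_\ell$) and then observing that the resulting stationary 1-shock lets $\rsp[u_\ell,\hat u(q_\ell,u_\ell)](\xi)$ equal $u_\ell$ on the whole $\{\xi<0\}$, together with the idempotency $\check u(q_\ell,\check u(q_\ell,u_r))=\check u(q_\ell,u_r)$ needed in (iii). Once these two geometric facts about Lax curves are established, the remainder is a direct application of Definitions~\ref{d:RSV} and \ref{d:rsv0}.
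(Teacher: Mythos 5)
Your proposal is correct and follows essentially the same route as the paper: reduce (i) to a comparison of the defining flow functions, derive (ii) from the explicit formula \eqref{e:rsv00} together with the geometry of $\mathsf{FL}_1^{u_\ell}$ and the supremal property of $\overline{Q}$, and establish (iii) by showing the traces land back in $\mathsf{CH}_\ell^{\scriptscriptstyle\complement}\times\Omega$ and that $\check u(q_\ell,\cdot)$ is idempotent. The only difference is one of exposition: the paper states the two key geometric facts telegraphically (for \eqref{e:prop2}, that the 1-wave from $u_\ell$ to $\hat u(q_\ell,u_\ell)$ is either trivial or a stationary shock, so the $\xi<0$ piece is constant; for (iii), that $\check u(q_\ell,\check u(q_\ell,u_r))=\check u(q_\ell,u_r)$ \lq\lq because $q_\ell>0$\rq\rq), whereas you carry out the underlying Lax-curve computations explicitly — identifying $\hat u(q_\ell,u_\ell)$ with the sonic-shock state $(\rho_\ell(v_\ell/a)^2,q_\ell)$ and verifying the strict monotonicity of $\mathcal{S}_2^{\check u}$ above $q_\ell$. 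Both of your computations are correct and simply make rigorous what the paper leaves to the reader.
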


\begin{proof}
About {\em (i)}, formula \eqref{e:prop1} directly follows from Definition~\ref{d:rsv0} and by comparing \eqref{e:Blackfield} with \eqref{e:rsv0}.

We now prove {\em (ii)}. About \eqref{e:prop2}, if $(u_\ell,u_r) \in \mathsf{CH}_\ell^{\scriptscriptstyle\complement} \times \Omega$, then by \eqref{e:rsc} and \eqref{e:rsv0} we deduce that
\[
\rsh[u_\ell,u_r](\xi) =
\begin{cases}
\rsp\left[u_\ell,\hat{u}\left(q_\ell,u_\ell\right)\right](\xi) & \hbox{ if } \xi<0,
\\
\rsp[\check{u}(q_\ell,u_r),u_r](\xi)&\hbox{ if } \xi\geq0. 
\end{cases}
\]
Then, we have two possibilities: either $\hat{u}\left(q_\ell,u_\ell\right) = u_\ell$ and so $\rsp\left[u_\ell,\hat{u}\left(q_\ell,u_\ell\right)\right] \equiv u_\ell$, or else $\hat{u}\left(q_\ell,u_\ell\right)\neq u_\ell$ and so $\rsp\left[u_\ell,\hat{u}\left(q_\ell,u_\ell\right)\right]$ consists of constant states $u_\ell$ and $\hat{u}\left(q_\ell,u_\ell\right)$ separated by a stationary shock.
In both cases \eqref{e:prop2} immediately follows.
To prove \eqref{e:prop3}, we first recall \eqref{e:rsc}, \eqref{e:rsv00} and observe that if $(u_\ell,u_r) \in \mathsf{CH}_\ell^{\scriptscriptstyle\complement}\times\Omega$ then
\[q( \rsh[u_\ell,u_r] )(0) = Q_{\rm h}(u_\ell) = q_\ell.
\]
By the definition \eqref{e:CHellC} of $\mathsf{CH}_\ell^{\scriptscriptstyle\complement}$ we have that both $q_\ell \in (0, q_*)$ and $v_\ell>a$; thus by \eqref{e:AliceinChains} we have
\[
q_\ell=\overline{Q}(u_\ell) = \max_{u\in\Omega} q( \rsp[u_\ell,u] )(0) \in (0, q_*).
\]

At last, to prove {\em (iii)}, notice that from \eqref{e:prop2} we have for any $(u_\ell,u_r) \in \mathsf{CH}_\ell^{\scriptscriptstyle\complement}\times\Omega$ that
\[\left(\rsh[u_\ell,u_r](0^-) , \rsh[u_\ell,u_r](0^+)\right) 
= \left(u_\ell , \check{u}(q_\ell,u_r)\right)
\in \mathsf{CH}_\ell^{\scriptscriptstyle\complement}\times\Omega.
\]
Moreover $\check{u}(q_\ell,\check{u}(q_\ell,u_r)) = \check{u}(q_\ell,u_r)$ because $q_\ell>0$, and then $\rsh$ is coherent in $\mathsf{D}$.
\end{proof}

Proposition \ref{p:argh} proofs the properties of $\rsh$ listed at the beginning of this section; in particular, formula \eqref{e:prop3} is the maximization of the flow at $x=0$.

\section{Numerical approximation of a c-Riemann solver \texorpdfstring{$\rsc$}{}}
\label{s:Ulrich}
In this section we introduce the numerical scheme to be used in the following and show some simulations to show its reliability in dealing cases where the valve is involved.

\subsection{Description of the numerical scheme}

In this subsection, we describe the scheme used to approximate the solutions provided by a given c-Riemann solver $\rsc$. It is based on the Random Choice Method (RCM), which was
introduced in~\cite{Glimm_CPAM_1965} in order to prove the existence of solutions to systems of
non-linear hyperbolic conservation laws. It has then been adapted and used in~\cite{Chorin_JCP_1976} as a numerical scheme. 
We also quote~\cite{Toro_book_1997} and references therein, for the description of the method as a numerical scheme to be implemented.\\

\noindent Let $\Delta x$ and $\Delta t$ be the constant space and time steps, respectively. We introduce the points $x_{j+1/2} \doteq j \, \Delta x$, the cells $K_j \doteq [x_{j-1/2},x_{j+1/2})$
and the cell centers $x_j \doteq (j-1/2) \, \Delta x$ for $j\in\mathbb{Z}$. We denote by $j_c$ the index such that $x_{j_c+1/2}$ is the location of the valve. Define 
$N \doteq \lfloor T/\Delta t\rfloor$ and, for $n\in\Z\cap[0,N]$, introduce the time discretization $t^n \doteq n \, \Delta t$. We denote by $u_\Delta$ the approximate solution that we assume 
to be constant in each cell $K_j$:
\[u_\Delta(t,x) \doteq u_j^n\in\R^2,\quad (t,x)\in[t^n,t^{n+1})\times K_j.\]
Next, we denote by $\hat{u}_\Delta(Q(u_\ell,u_r),u_\ell)$ and $\check{u}_\Delta(Q(u_\ell,u_r),u_r)$ the numerical approximations of 
$\hat{u}(Q(u_\ell,u_r),u_\ell)$ and $\check{u}_\Delta(Q(u_\ell,u_r),u_r)$, respectively.\\

\noindent The main goal is now to compute $u_j^n$ for any $n\in\N\cap[0,N]$ and $j\in\mathbb{Z}$. We first define  
\[u_j^0 \doteq \dfrac{1}{\Delta x}\int_{K_j} u(0,x) \,\d x.\]
Now for a fixed $n\in\Z\cap[0,N]$, assume that $u_j^n$ is given and for any $j\in\mathbb{Z}$. We use the following procedure to compute $u_j^{n+1}$:
\begin{itemize}[leftmargin=*]
\item [$\bullet$] We pick up randomly or quasi-randomly a number $\theta^n\in[0,1]$. Here, as in Colella~\cite{Collela_SIAJSSC_1982} (see also~\cite{Toro_book_1997}), 
we consider the van der Corput random sequence $(\theta^n)$ defined by 
\[\theta^n \doteq \sum_{k=0}^m i_k \, 2^{-(k+1)},\]
where
\[n\doteq\sum_{k=0}^m i_k \, 2^k,\qquad i_k\in\{0,1\},\]
denotes the binary expansion of the integer $n$.
\item [$\bullet$] The updated solution is then computed as follows, for $j\notin\{j_c,j_{c+1}\}$,
\[u_j^{n+1} \doteq
\begin{cases}
\rsp\left[u_{j-1}^n,u_j^n\right]\left( \theta^n\Delta x/\Delta t \right)
&\hbox{ if }0\leq \theta^n\leq \tfrac{1}{2},\\[2pt]
\rsp\left[u_{j}^n,u_{j+1}^n\right]\left( (\theta^n-1)\Delta x/\Delta t \right)
&\hbox{ if }\tfrac{1}{2}\leq \theta^n\leq 1,\\
\end{cases}\]
and, for $j\in\{j_c,j_{c+1}\}$, 
\begin{align*}
u_{j_c}^{n+1}& \doteq
\begin{cases}
\rsp\left[u_{j_c-1}^n,u_{j_c}^n\right]\left( \theta^n\Delta x/\Delta t \right)
&\hbox{ if }0\leq \theta^n\leq \tfrac{1}{2},\\[2pt]
\rsp\left[u_{j_c}^n,\hat{u}_\Delta\left(Q(u_{j_c-1}^n,u_{j_c}^n),u_{j_c-1}^n\right)\right]\left( (\theta^n-1)\Delta x/\Delta t \right)
&\hbox{ if }\tfrac{1}{2}\leq \theta^n\leq 1,\\
\end{cases}
\\
u_{j_c+1}^{n+1}& \doteq
\begin{cases}
\rsp\left[\check{u}_\Delta\left(Q(u_{j_c}^n,u_{j_c+1}^n),u_{j_c+1}^n\right),u_{j_c+1}^n\right]\left( \theta^n\Delta x/\Delta t \right)
&\hbox{ if }0\leq \theta^n\leq \tfrac{1}{2},\\[2pt]
\rsp\left[u_{j_c+1}^n,u_{j_c+2}^n\right]\left( (\theta^n-1)\Delta x/\Delta t \right)
&\hbox{ if }\tfrac{1}{2}\leq \theta^n\leq 1.\\
\end{cases}
\end{align*}
\end{itemize}

\noindent Let us note that, as usual, the time steps are chosen with respect to the CFL condition, that is, 
\[\Delta t = \frac{C_{\rm cfl} \, \Delta x}{\underset{j\in\mathbb{Z}}\max\,\underset{i\in\{1,2\}}\max|\lambda_i(u_j^n)|},\]
where the CFL coefficient $C_{\rm cfl}$ satisfies $0\leq C_{\rm cfl}\leq \frac{1}{2}$. For all the simulations of this paper, we always take $C_{\rm cfl}=0.45$. 

\subsection{Numerical simulations}

\noindent We use the scheme to compute numerical solutions of some cases involving different configurations of the valve,
and we compare them with exact solutions when available. We define the following relative $\L1$-error
\[e_{\L1}^t(\Delta x) \doteq \dfrac{\|u_\Delta(t,\cdot)-u(t,\cdot)\|_{\L1(I)}}{\|u(t,\cdot)\|_{\L1(I)}},\]
where $I\subset\R$ is the computational domain. In the remaining part of this subsection, we take
\begin{align*}
&I=[-1,1],&
&a=2,&
&q_*=3&
&\text{and the final time }&
&T=0.2.
\end{align*}

\noindent In the first two examples, we consider the case where $u_\ell\in \mathsf{CH}_\ell$.

\begin{xmpl}
\label{ex:1}
We take $u_\ell=(6, 1)  \in \mathsf{CH}_\ell\setminus\mathsf{C}_\ell$ and $u_r=(1,-1)$.
This corresponds to the case when the valve realizes the flow $q^*$.
In \figurename~\ref{fig.errors.valve}~(a), we show the numerical convergence of the scheme and this result also shows that the order of convergence is approximately~$1$.
Moreover, we can see in Figures~\ref{fig.solutions.valve.open} that the numerical solution is in a good agreement with the exact one. 
\end{xmpl}

\begin{xmpl}
\label{ex:2}
We take now $u_\ell=(2,2) \in \mathsf{CH}_\ell\cap\mathsf{C}_\ell$ and $u_r=(3,4)$.
This corresponds to the case when the valve is closed.
As for the previous example, we can see in \figurename~\ref{fig.errors.valve}~(b), the numerical convergence of the scheme 
and that the order of convergence is also approximately~$1$.
Moreover, \figurename~\ref{fig.solutions.valve.closed} shows the good agreement between the numerical and the exact solutions. 
\end{xmpl}

\begin{figure}[!htb]\centering
\begin{subfigure}[(t)]{54mm}
\begin{tikzpicture}[every node/.style={anchor=south west,inner sep=0pt},x=1mm, y=1mm]
\node at (0,0) {\includegraphics[width=54mm,trim=69mm 170mm 69mm 46mm, clip]{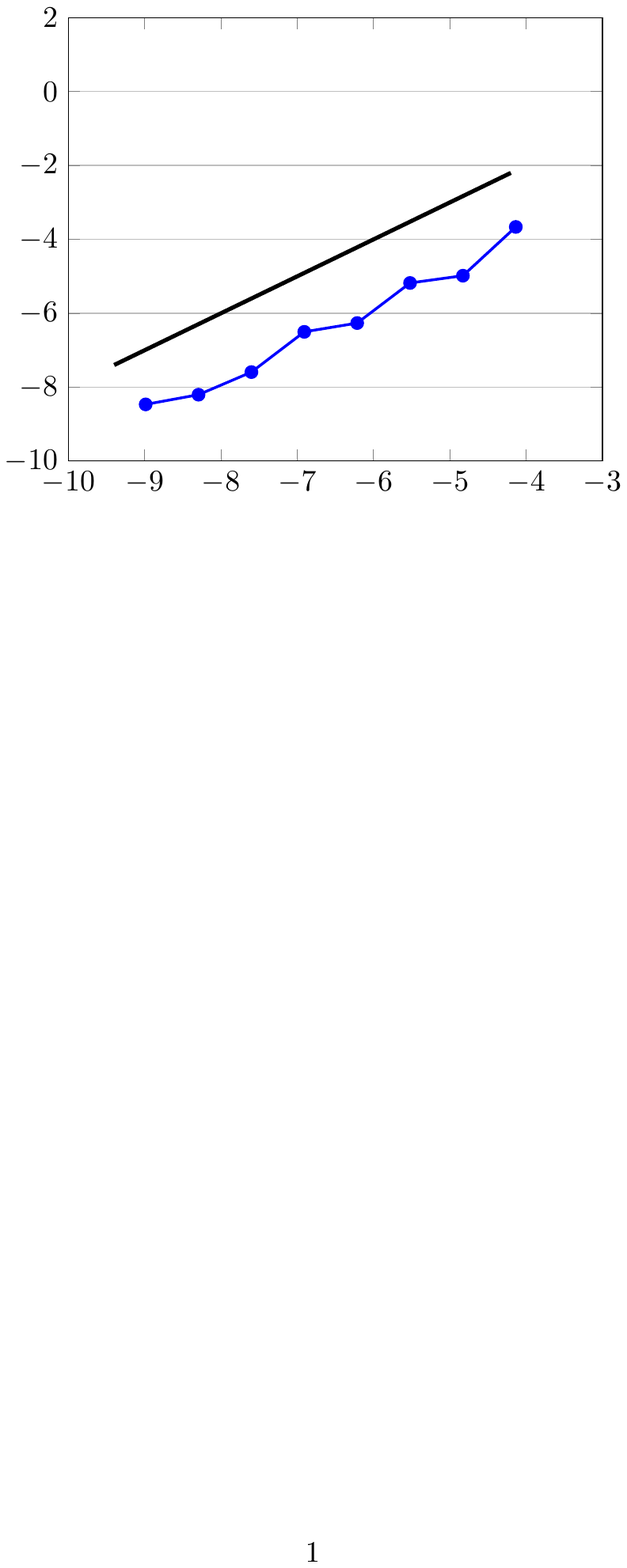}};
\draw[black, fill=white] (7,30) rectangle (30.5,40);
\draw[thick, blue] (8,37) -- (15,37) node[right, black] {\strut$\,e_{\L1}^{0.2}(\Delta x)$};
\draw[blue, fill=blue] (11.5,37) circle (2pt);
\draw[thick, black] (8,32) -- (15,32) node[right, black] {\strut\,slope $1$};
\end{tikzpicture}
\caption*{(a) Example~\ref{ex:1}}
\end{subfigure}
\hspace{30mm}
\begin{subfigure}[(t)]{54mm}
\begin{tikzpicture}[every node/.style={anchor=south west,inner sep=0pt},x=1mm, y=1mm]
\node at (0,0) {\includegraphics[width=54mm,trim=69mm 170mm 69mm 46mm, clip]{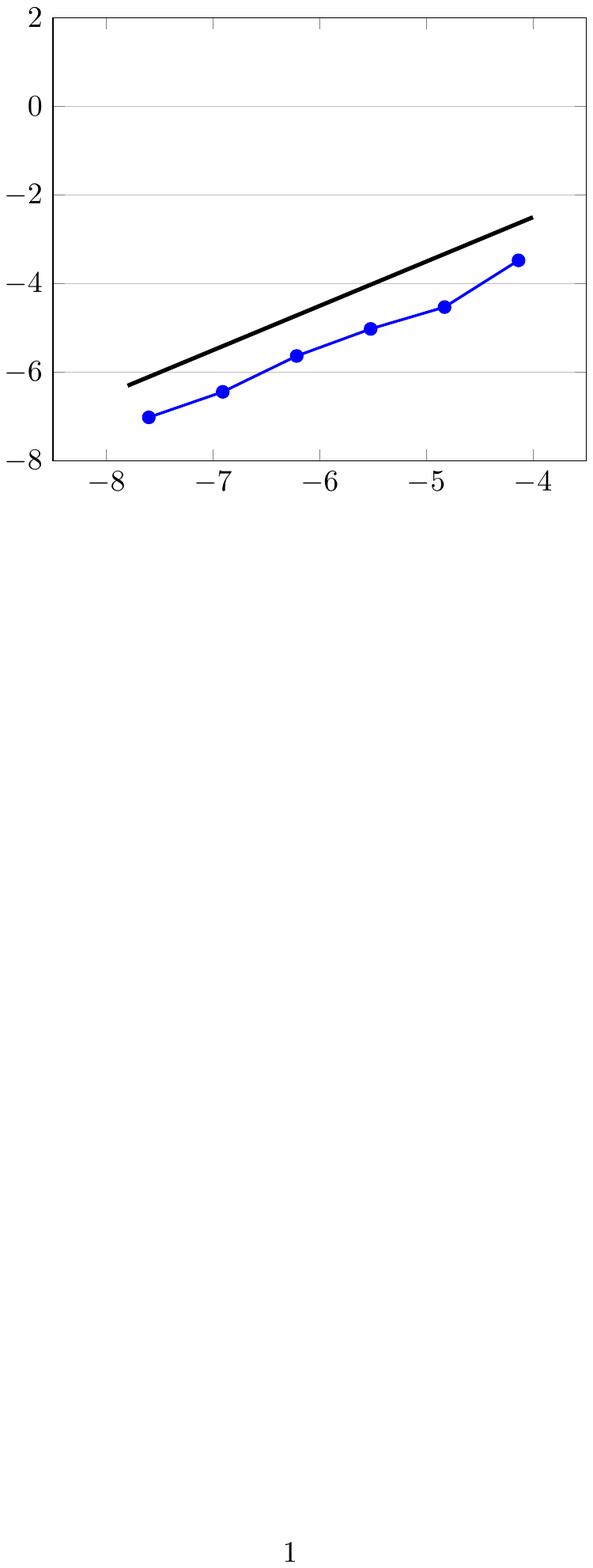}};
\draw[black, fill=white] (7,30) rectangle (30.5,40);
\draw[thick, blue] (8,37) -- (15,37) node[right, black] {\strut$\,e_{\L1}^{0.2}(\Delta x)$};
\draw[blue, fill=blue] (11.5,37) circle (2pt);
\draw[thick, black] (8,32) -- (15,32) node[right, black] {\strut\,slope $1$};
\end{tikzpicture}
\caption*{(b) Example~\ref{ex:2}}
\end{subfigure}
\caption{Relative $\L1$-errors in log/log scale for Examples~\ref{ex:1} and~\ref{ex:2}.}
\label{fig.errors.valve}
\end{figure}

\begin{figure}[!htb]\centering
\begin{tikzpicture}[every node/.style={anchor=south west,inner sep=0pt},x=1mm, y=1mm]
\node at (0,0) {\includegraphics[height=50mm,trim=62.5mm 156.5mm 63mm 47mm, clip]{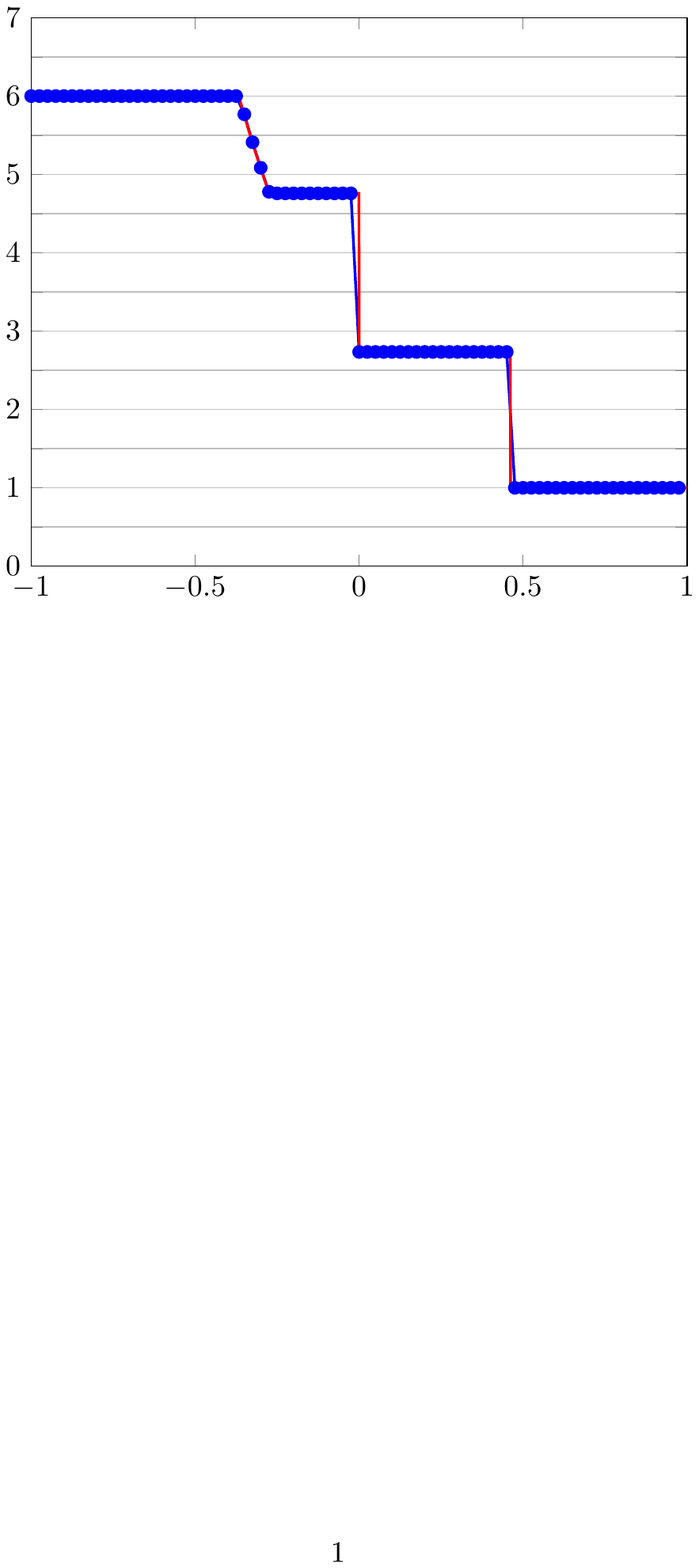}};

\begin{scope}[shift={(32.25,37)}]
\draw[black, fill=white] (0,0) rectangle (26,10);
\draw[thick, blue] (1,7) -- (8,7) node[right, black] {\strut$\,\rho_{\Delta}(0.2,x)$};
\draw[blue, fill=blue] (4.5,7) circle (2pt);
\draw[thick, red] (1,2) -- (8,2) node[right, black] {\strut$\,\rho(0.2,x)$};
\end{scope}
\end{tikzpicture}
\hspace{30mm}
\begin{tikzpicture}[every node/.style={anchor=south west,inner sep=0pt},x=1mm, y=1mm]
\node at (0,0) {\includegraphics[height=50mm,trim=62.5mm 156.5mm 63mm 47mm, clip]{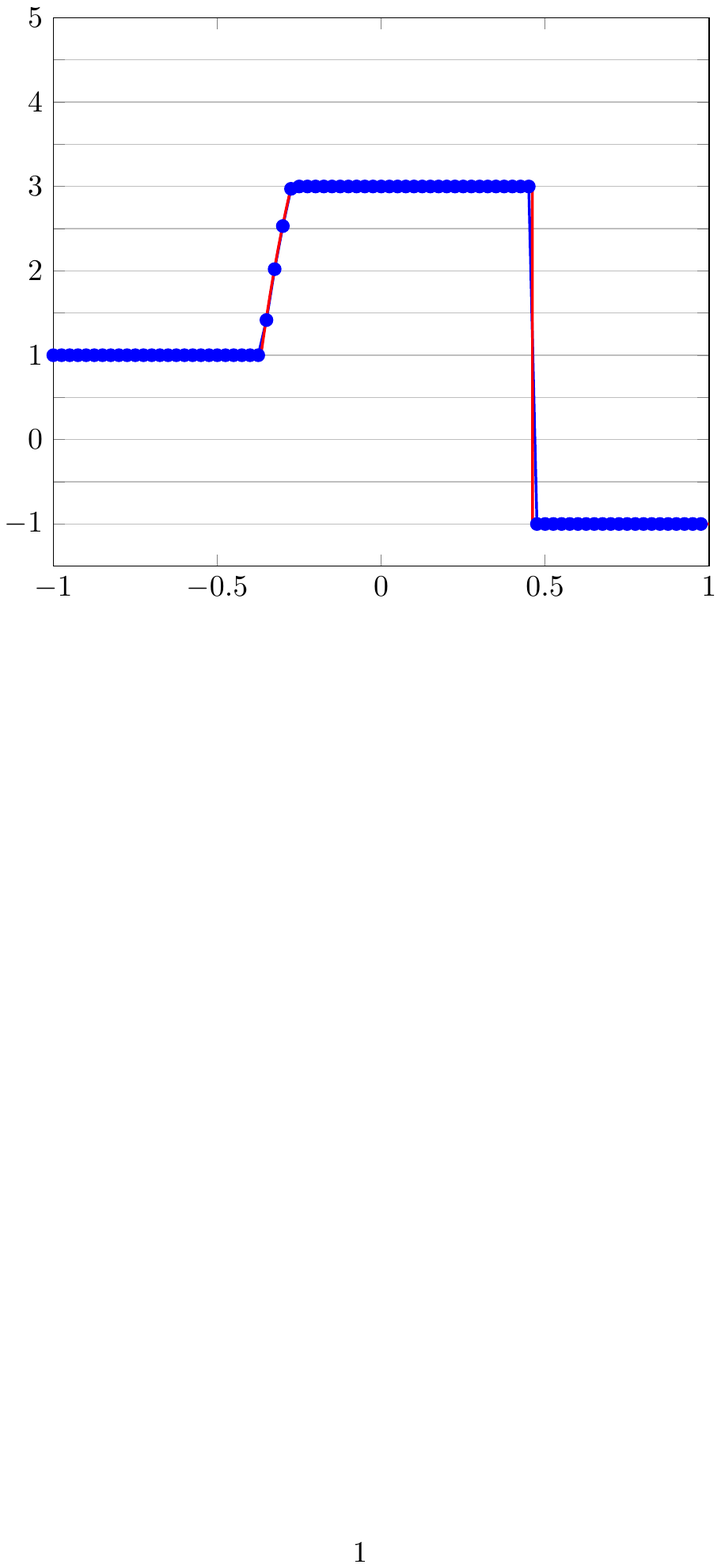}};

\begin{scope}[shift={(5,37)}]
\draw[black, fill=white] (0,0) rectangle (26,10);
\draw[thick, blue] (1,7) -- (8,7) node[right, black] {\strut$\,q_{\Delta}(0.2,x)$};
\draw[blue, fill=blue] (4.5,7) circle (2pt);
\draw[thick, red] (1,2) -- (8,2) node[right, black] {\strut$\,q(0.2,x)$};
\end{scope}
\end{tikzpicture}
\caption{Exact and numerical solutions for Example~\ref{ex:1} with $\Delta x=5\times10^{-4}$.}
\label{fig.solutions.valve.open}
\end{figure}

\begin{figure}[!htb]\centering
\begin{tikzpicture}[every node/.style={anchor=south west,inner sep=0pt},x=1mm, y=1mm]
\node at (0,0) {\includegraphics[height=50mm,trim=62.5mm 156.5mm 63mm 47mm, clip]{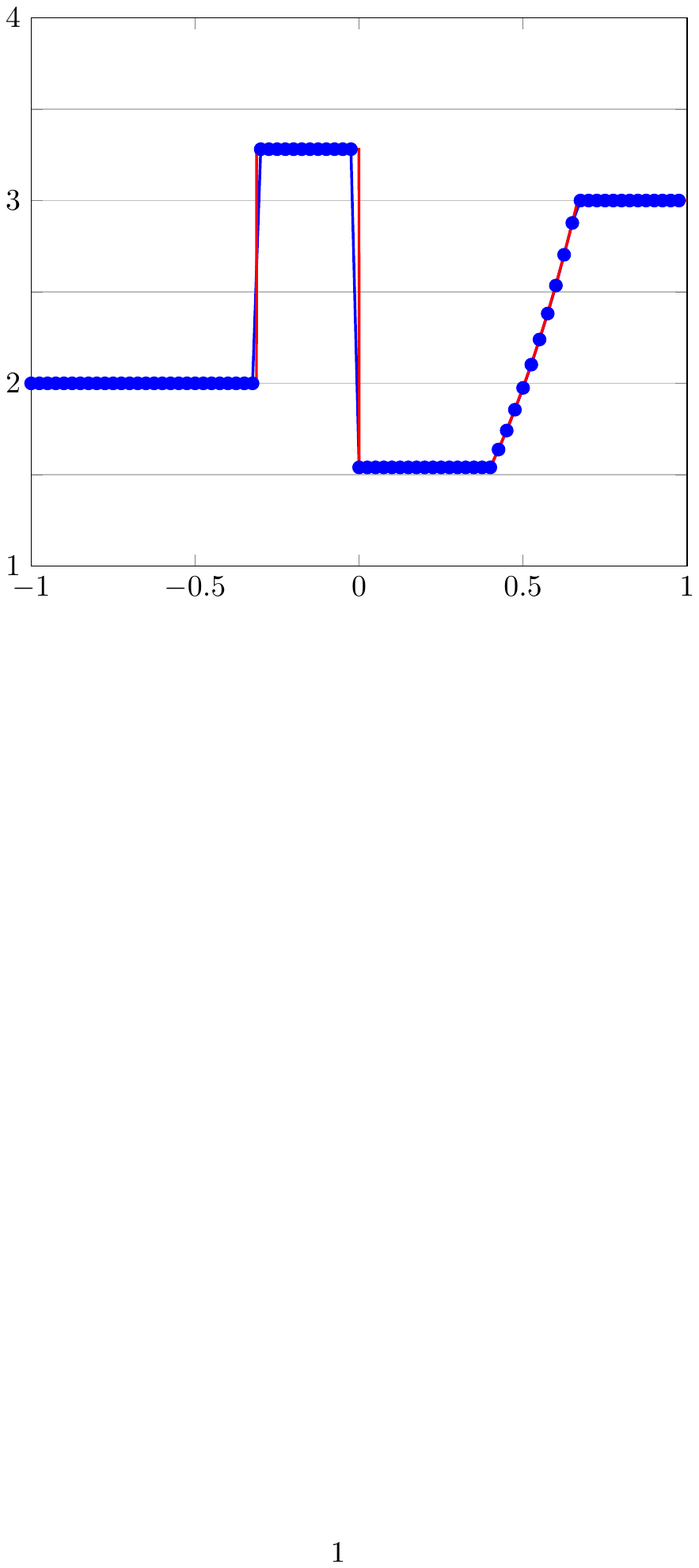}};

\begin{scope}[shift={(32.25,37)}]
\draw[black, fill=white] (0,0) rectangle (26,10);
\draw[thick, blue] (1,7) -- (8,7) node[right, black] {\strut$\,\rho_{\Delta}(0.2,x)$};
\draw[blue, fill=blue] (4.5,7) circle (2pt);
\draw[thick, red] (1,2) -- (8,2) node[right, black] {\strut$\,\rho(0.2,x)$};
\end{scope}
\end{tikzpicture}
\hspace{30mm}
\begin{tikzpicture}[every node/.style={anchor=south west,inner sep=0pt},x=1mm, y=1mm]
\node at (0,0) {\includegraphics[height=50mm,trim=62.5mm 156.5mm 63mm 47mm, clip]{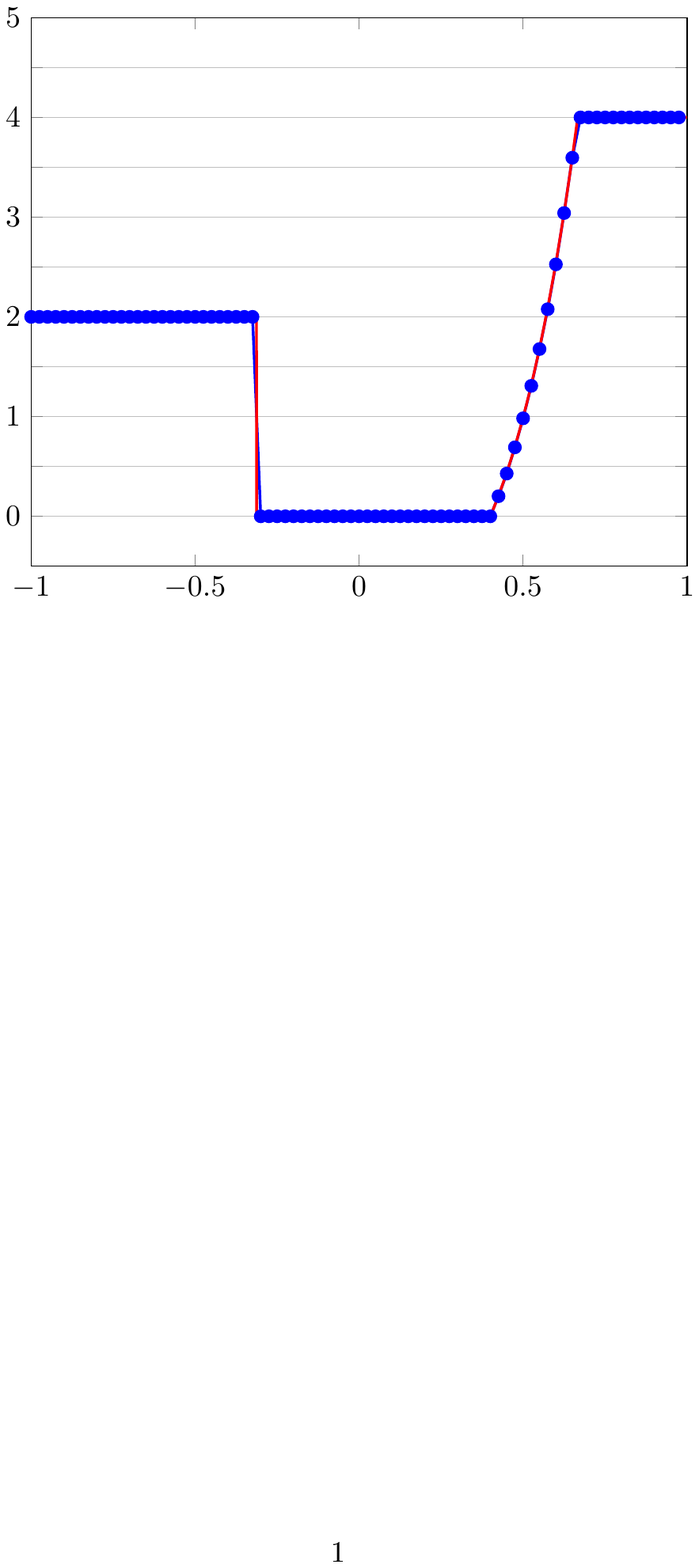}};

\begin{scope}[shift={(5,37)}]
\draw[black, fill=white] (0,0) rectangle (26,10);
\draw[thick, blue] (1,7) -- (8,7) node[right, black] {\strut$\,q_{\Delta}(0.2,x)$};
\draw[blue, fill=blue] (4.5,7) circle (2pt);
\draw[thick, red] (1,2) -- (8,2) node[right, black] {\strut$\,q(0.2,x)$};
\end{scope}
\end{tikzpicture}
\caption{Exact and numerical solutions for Example~\ref{ex:2} with $\Delta x=5\times10^{-4}$}
\label{fig.solutions.valve.closed}
\end{figure}

\begin{rmrk}
We now provide the main motivation to the choice of the numerical scheme we use in this paper: 
it lies in the fact that the RCM approximates well single discontinuities. 
By definition, the state $\check{u} = \check{u}\left(Q_{\rm h}(u_\ell),u_r\right)$ is given by the intersection of $\mathsf{BL}_2^{u_r}$ and $q=Q_{\rm h}(u_\ell)$; hence, the solution $\rsp[\check{u},u_r]$ has a single wave in $\xi>0$, namely a $2$-wave.
It is generically impossible, from a numerical point of view, to catch exact values on a curve.
For this reason we consider a numerical approximation $\check{u}_\Delta$ of $\check{u}$.
If $\check{u}_\Delta$ is subsonic, then $\rsp[\check{u}_\Delta,u_r]$ has only a $2$-wave in $\xi>0$ by Remark~\ref{r:BHK}. 
On the contrary, if $\check{u}_\Delta$ is supersonic, then $\rsp[\check{u}_\Delta,u_r]$ can well have a $1$-wave followed by a $2$-wave in $\xi>0$, because Remark~\ref{r:BHK} does not hold any more. 
Thus essentially any numerical approximation of $\rsp[\check{u},u_r]$ different from RCM, based on standard finite-volume methods (such as the Godunov scheme) 
has a $1$-wave followed by a $2$-wave in $\xi>0$ if $\check{u}_\Delta$ is supersonic, 
by the stability of the scheme.
The RCM avoids this sever drawback.
\end{rmrk}

\section{Maximization of the flow}
\label{s:max}
In this section we use the solver $\rsh$ to treat a maximization problem, by looking whenever possible to explicit solutions. 
Since we let the flow-threshold parameter $q_*$ vary, we use in the following the explicit notation $Q_{\rm h}^{q_*}$, $\mathsf{C}_\ell^{q_*}$,
$\mathsf{CH}_\ell^{q_*,\scriptscriptstyle\complement}$ for $Q_{\rm h}$, $\mathsf{C}_\ell$, $\mathsf{CH}_\ell^{\scriptscriptstyle\complement}$  
given by \eqref{e:rsv0}, \eqref{e:CL}, \eqref{e:CHellC}, respectively. As a consequence we denote by $\rsh^{q_*}$ the c-Riemann solver corresponding to $Q_{\rm h}^{q_*}$.

We fix a time horizon $T>0$ and an initial datum attaining the values $u_i, u_\ell, u_r \in \Omega$ ($i$ for \lq\lq ingoing\rq\rq) for $x$ belonging to $(-\infty, -1)$, $[-1,0)$, $[0,\infty)$, respectively; we only let $q_*$ vary. 
For any $q_*\geq0$, we denote by $u^{q_*}(t,x) \doteq (\rho^{q_*}(t,x),q^{q_*}(t,x))$ the solution corresponding to the initial condition
\begin{equation}\label{e:ic3}
u^{q_*}(0,x)  = \begin{cases}
u_i&\hbox{ if }x<-1,\\
u_\ell&\hbox{ if }-1\leq x<0,\\
u_r&\hbox{ if }x\geq0,
\end{cases}
\end{equation}
and constructed by applying $\rsh^{q_*}$ at $x=0$ and $\rsp$ elsewhere.
The choice of the initial datum as in \eqref{e:ic3} represents a Riemann problem at the valve position with a perturbation on the left. The choice of the point $-1$ is for simplicity: a different value only leads to a rescaling.

Assume for the moment that for any $q_*\geq0$ the corresponding solution $u^{q_*}$ is unique and well defined up to a fixed time $T$. 
We then study the maximization problem
\begin{equation}\label{e:opt1}
\max_{q_*>0} \mathfrak{Q}(q_*,T) \qquad \hbox{ with }\qquad \mathfrak{Q}(q_*,T) \doteq \frac{1}{T} \int_0^T q^{q_*}(t,0) \,\d t,
\end{equation}
for the average flow $\mathfrak{Q}(q_*,T)$.
We point out that the above assumption of existence of solutions is not trivial, because of the possibility of blow up in finite time \cite{MR1849664,  Bressan-NoBVbounds, MR1752421}; furthermore, to solve \eqref{e:opt1} we should also need qualitative properties of the solutions. As a consequence, analytic results for maximization problem \eqref{e:opt1} can hardly be proved in a general setting.
For this reason, in the following Subsections~\ref{ss:1} and~\ref{ss:2} we focus on some particular cases where {\em analytical} results are available.
These results will be crucial benchmarks for the numerical simulations in Subsections~\ref{sub:n1} and~\ref{sub:n2}, 
which regard an example which doesn't fit in the analytical results obtained in the preceding subsections. The last Subsection \ref{ss:3} contains a further case study which is treated only numerically.
For all the numerical simulations performed in the sequel, we always take $\Delta x=5\times10^{-4}$.

\subsection{The case \texorpdfstring{$u_i=u_\ell$}{}}
\label{ss:1}
In the case $u_i=u_\ell$, problem \eqref{e:opt1} only concerns solutions to a fixed Riemann problem at $x=0$; in particular, $q^{q_*}(t,0)=Q_{\rm h}^{q_*}(u_\ell)$ does not depend on $t$. We recall that the set $\mathsf{CH}_{\ell,1}$ does not depend on $q_*$, see \eqref{e:CH}.

\begin{prpstn}
Consider the maximization problem \eqref{e:opt1} in the case $u_i=u_\ell$. Then for any $T>0$ we have
\[
\max_{q_*>0}\mathfrak{Q}(q_*,T) = \overline{Q}(u_\ell),
\]
and a maximizer is $q_*=\overline{Q}(u_\ell)$. 
Moreover, the maximizer is unique if and only if $u_\ell \in \mathsf{CH}_{\ell,1}$.
\end{prpstn}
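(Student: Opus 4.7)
The plan has three stages. First, since $u_i = u_\ell$, the initial condition \eqref{e:ic3} is in fact the Riemann datum $(u_\ell, u_r)$ at $x = 0$, and the solution constructed with $\rsh^{q_*}$ is self-similar, with $q^{q_*}(t, 0) \equiv Q_{\rm h}^{q_*}(u_\ell)$ for every $t > 0$. Hence $\mathfrak{Q}(q_*, T) = Q_{\rm h}^{q_*}(u_\ell)$ does not depend on $T$, and the problem reduces to maximizing the scalar function $q_* \mapsto Q_{\rm h}^{q_*}(u_\ell)$.

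Second, I would examine the three cases of \eqref{e:rsv0}. If $q_* \leq \overline{Q}(u_\ell)$, then $Q_{\rm h}^{q_*}(u_\ell) = q_* \leq \overline{Q}(u_\ell)$, with equality iff $q_* = \overline{Q}(u_\ell)$. If $q_* > \overline{Q}(u_\ell)$ and $u_\ell \in \mathsf{CH}_\ell^{q_*}$, then $Q_{\rm h}^{q_*}(u_\ell) = 0$. Finally, if $u_\ell \in \mathsf{CH}_\ell^{q_*,\scriptscriptstyle\complement}$, the characterization \eqref{e:CHellC} forces $v_\ell > v_*^{\sup} > a$, so $u_\ell$ is supersonic, \eqref{e:AliceinChains} yields $\overline{Q}(u_\ell) = q_\ell$, and therefore $Q_{\rm h}^{q_*}(u_\ell) = q_\ell = \overline{Q}(u_\ell)$. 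Combining these three cases gives $\max_{q_* > 0} Q_{\rm h}^{q_*}(u_\ell) = \overline{Q}(u_\ell)$, attained at $q_* = \overline{Q}(u_\ell)$, which settles the first claim.

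For the uniqueness claim, I notice that any additional maximizer can only arise from the third case, whose availability for some $q_* > q_\ell$ requires $v_\ell > v_*^{\sup}$, i.e., $u_\ell \notin \mathsf{CH}_{\ell,1}$. Hence if $u_\ell \in \mathsf{CH}_{\ell,1}$ the maximizer is unique. Conversely, assuming $u_\ell \notin \mathsf{CH}_{\ell,1}$, the plan is to exhibit a right neighborhood of $q_\ell$ consisting of maximizers: for such $q_*$ one needs $u_\ell \in \mathsf{CH}_\ell^{q_*,\scriptscriptstyle\complement}$, i.e., $\mathcal{S}_1^{u_*^0}(\rho_\ell) \leq q_\ell < q_*$. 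The main obstacle is the first of these inequalities; substituting $\rho_*^0 = e\,q_*/a$ into \eqref{e:SRi} at $q_* = q_\ell$ reduces it to an elementary scalar inequality in $\alpha \doteq v_\ell/a$ which, by the very definition of $v_*^{\sup}$ (and the scale-invariance that makes $v_*^{\sup}$ independent of $q_*$), holds with equality at $\alpha = v_*^{\sup}/a$ and strictly for $\alpha > v_*^{\sup}/a$. A continuity argument in $q_*$ then supplies a full right neighborhood of $q_\ell$ of additional maximizers, completing the proof.
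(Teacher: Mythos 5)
Your proof is correct and follows essentially the same skeleton as the paper's: reduce $\mathfrak{Q}(q_*,T)$ to the scalar function $q_*\mapsto Q_{\rm h}^{q_*}(u_\ell)$ (since with $u_i=u_\ell$ the solution is self-similar with $q^{q_*}(t,0)\equiv Q_{\rm h}^{q_*}(u_\ell)$), then split according to the three branches of \eqref{e:rsv0}. The one place where you genuinely diverge is the converse of the uniqueness claim: the paper cites \cite[(5.1)]{CR1} to conclude that $u_\ell\in\mathsf{CH}_\ell^{q_*,\scriptscriptstyle\complement}$ exactly when $q_*\in(\overline{Q}(u_\ell),\mathring{q}(u_\ell)]$, thereby obtaining the full piecewise graph \eqref{e:Ghost2} of $Q_{\rm h}^{q_*}(u_\ell)$. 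You instead go back to the defining inequality $\mathcal{S}_1^{u_*^0}(\rho_\ell)\leq q_\ell$ of $\mathsf{CH}_\ell^{q_*,\scriptscriptstyle\complement}$, note that at $q_*=q_\ell$ it reduces (after substituting $\rho_*^0=e\,q_*/a$ into \eqref{e:SRi}) to an inequality in $\alpha=v_\ell/a$ which holds strictly precisely for $\alpha>v_*^{\sup}/a$, and push to a right neighborhood of $q_\ell$ by continuity. This is more elementary and self-contained than the paper's citation-based step, at the cost of only producing \emph{some} nondegenerate interval of maximizers rather than identifying its exact right endpoint $\mathring{q}(u_\ell)$ --- but that extra information is not needed for the statement being proved. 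One caveat: you phrase the crucial step as a ``plan''; to be a complete proof you would need to actually carry out the substitution and confirm the equivalence $\mathcal{S}_1^{u_*^0}(\rho_\ell)\leq q_\ell\ \Longleftrightarrow\ v_\ell\geq v_*^{\sup}$ at $q_*=q_\ell$ (it does check out: writing $z=\sqrt{\rho_\ell/\rho_*^0}=1/\sqrt{e\alpha}$ the inequality becomes $a z^2+v_\ell z-a\geq 0$, i.e.\ $1/\sqrt{e\alpha}\geq\bigl(\sqrt{\alpha^2+4}-\alpha\bigr)/2$, with equality at $\alpha=v_*^{\sup}/a\approx 1.63$), and observe that $q_*\mapsto\mathcal{S}_1^{u_*^0}(\rho_\ell)$ is continuous so that strict inequality persists for $q_*$ slightly above $q_\ell$.
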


\begin{proof} 
In the case $u_i=u_\ell$, problem \eqref{e:opt1} reduces to maximize $q_*\mapsto Q_{\rm h}^{q_*}(u_\ell)$ because $q^{q_*}(t,0) = Q_{\rm h}^{q_*}(u_\ell)$ for any $t>0$ and therefore $\mathfrak{Q}(q_*,T) = Q_{\rm h}^{q_*}(u_\ell)$ for any $T>0$. 

\begin{figure}[!htb]\centering
\resizebox{.9\linewidth}{!}{
\begin{tikzpicture}[>=latex, x=10mm, y=9mm, semithick]
\draw[->] (0,0) --  (6.1,0) node[below]{\strut$q_*$} coordinate (x axis);
\draw[->] (0,0) -- (0,4) node[left]{$Q_{\rm h}^{q_*}(u_\ell)$} coordinate (y axis);
\draw[dashed] (0,3)node[left]{$\overline{Q}(u_\ell)$} -- (3,3) -- (3,0) node[below]{\strut$\overline{Q}(u_\ell)$};
\draw[ultra thick] (0,0) -- (3,3) node[midway,sloped,above]{$u_\ell\in\mathsf{CH}_{\ell,1} \setminus \mathsf{C}_\ell^{q_*}$};
\draw[fill=black] (0,0) circle (2pt);
\draw[fill=black] (3,3) circle (2pt);

\draw[ultra thick] (3,0) -- (6,0) node[midway,above]{$u_\ell \in \mathsf{CH}_{\ell,1} \cap \mathsf{C}_\ell^{q_*}$};
\draw[fill=white] (3,0) circle (2pt);
\end{tikzpicture}
\begin{tikzpicture}[every node/.style={anchor=south west,inner sep=0pt},x=1mm, y=.5mm]
\node at (4,4) {\includegraphics[width=90mm]{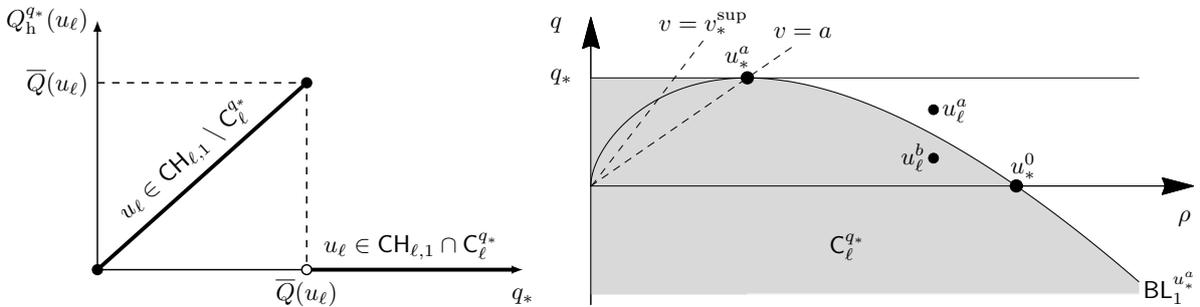}};
\node at (90,29) {\strut $\rho$};
\node at (0,85) {\strut $q$};
\node at (32,82) {\strut $v=a$};
\node at (0,70) {\strut $q_*$};
\node at (85,7) {\strut $\mathsf{BL}_1^{u_*^a}$};
\node at (25,75) {\strut $u_*^a$};
\node at (66,43) {\strut $u_*^0$};
\node at (40,20) {\strut $\mathsf{C}_\ell^{q_*}$};
\draw[dashed] (6,42) -- (22,84);
\node[above] at (22,84) {\strut $v=v_*^{\sup}$};
\draw[fill=black] (55,50) circle (2pt) node[left] {\strut $u_\ell^b\ $};
\draw[fill=black] (55,64) circle (2pt) node[right] {\strut $\ u_\ell^a$};
\end{tikzpicture}
}
\caption{\label{f:f2}{Left: Plot of $q_*\mapsto Q_{\rm h}^{q_*}(u_\ell)$ with $u_\ell \in \mathsf{CH}_{\ell,1}$ fixed, see \eqref{e:Ghost1}.
Right: Two states in $\mathsf{CH}_{\ell,1}$, with $u_\ell^a \in \mathsf{CH}_{\ell,1} \setminus \mathsf{C}_\ell^{q_*}$ and $u_\ell^b \in\mathsf{CH}_{\ell,1} \cap \mathsf{C}_\ell^{q_*}$.
The shaded region represents the set $\mathsf{C}_\ell^{q_*}$.
}}
\end{figure}

\begin{figure}[!htb]\centering
\resizebox{.9\linewidth}{!}{
\begin{tikzpicture}[>=latex, x=10mm, y=9mm, semithick]
\draw[->] (0,0) --  (8.1,0) node[below]{\strut$q_*$} coordinate (x axis);
\draw[->] (0,0) -- (0,4) node[left]{$Q_{\rm h}^{q_*}(u_\ell)$} coordinate (y axis);
\draw[dashed] (0,3) node[left]{$q_\ell$} -- (3,3) -- (3,0) node[below]{\strut$q_\ell = \overline{Q}(u_\ell)$};
\draw[ultra thick] (0,0) -- (3,3) node[midway,sloped,above]{$u_\ell\in\mathsf{CH}_{\ell,3}^{q_*}$};
\draw[ultra thick] (3,3) -- (5,3) node[midway,above]{$u_\ell \in \mathsf{CH}_\ell^{q_*,\scriptscriptstyle\complement}$};
\draw[dashed] (5,0) node[below]{\strut$\mathring{q}(u_\ell)$} -- (5,3);
\draw[fill=black] (0,0) circle (2pt);
\draw[fill=black] (3,3) circle (2pt);
\draw[fill=black] (5,3) circle (2pt);

\draw[ultra thick] (5,0) -- (8,0) node[midway,above]{$u_\ell \in \mathsf{CH}_{\ell,2}^{q_*}$};
\draw[fill=white] (5,0) circle (2pt);
\end{tikzpicture}
\begin{tikzpicture}[every node/.style={anchor=south west,inner sep=0pt},x=1mm, y=.5mm]
\node at (4,4) {\includegraphics[width=90mm]{coerenza123bis}};
\node at (90,13) {\strut $\rho$};
\node at (2,85) {\strut $q$};
\node at (2,62) {\strut $q_*$};
\draw[white,fill=white] (10,45) circle (2pt) node {\strut\color{white} $u_\ell^c$};
\draw[fill=black] (11,62) circle (2pt) node[below left] {\strut $u_\ell^b\,$};
\draw[fill=black] (10,75) circle (2pt) node[right] {\strut $\ u_\ell^a$};
\draw[-latex] (2,45) node[left] {\strut $\mathsf{CH}_{\ell,2}^{q_*}$} --++ (5,0);
\draw[-latex] (2,56) node[left] {\strut $\mathsf{CH}_\ell^{q_*,\scriptscriptstyle\complement}$} --++ (3,0);
\node at (50,40) {\strut $\mathsf{CH}_{\ell,1}$};
\draw[-latex] (2,75) node[left] {\strut $\mathsf{CH}_{\ell,3}^{q_*}$} --++ (3,0);
\node at (25,85) {\strut $v=v_*^{\sup}$};
\node at (40,85) {\strut $v=a$};
\end{tikzpicture}
}
\caption{\label{f:f1}{Left: Plot of $q_*\mapsto Q_{\rm h}^{q_*}(u_\ell)$ with $u_\ell \in \mathsf{CH}_{\ell,1}^{\scriptscriptstyle\complement}$ fixed, see \eqref{e:Ghost2}. 
Right: Three possible elements of $\mathsf{CH}_{\ell,1}^{\scriptscriptstyle\complement}$, with $u_\ell^a \in \mathsf{CH}_{\ell,3}^{q_*}$, $u_\ell^b \in\mathsf{CH}_\ell^{q_*,\scriptscriptstyle\complement}$ and $u_\ell^c \in\mathsf{CH}_{\ell,2}^{q_*}$.
}}
\end{figure}

Consider first the case $u_\ell \in \mathsf{CH}_{\ell,1}$, see \figurename~\ref{f:f2} on the right.
By \eqref{e:KingCrimson} and \eqref{e:rsv00} we have
\begin{equation}
\label{e:Ghost1}
Q_{\rm h}^{q_*}(u_\ell) = \begin{cases}
q_*&\hbox{ if }q_* \in [0,\overline{Q}(u_\ell)],\\
0&\hbox{ if }q_* >\overline{Q}(u_\ell).
\end{cases}
\end{equation}
The plot of $q_*\mapsto Q^{q_*}(u_\ell)$ is represented in \figurename~\ref{f:f2} on the left.

Assume now that $u_\ell \in \mathsf{CH}_{\ell,1}^{\scriptscriptstyle\complement}$, see \figurename~\ref{f:f1} on the right.
We denote
\[\mathring{q}(u) \doteq \bar{q}\left(\hat{u}(0,u)\right) = \dfrac{\rho}{4\,a\,e} \left[ \sqrt{v^2 + 4 \, a^2} + v \right]^2.\]
Observe that in this case $\overline{Q}(u_\ell)=q_\ell$ by \eqref{e:AliceinChains}.
We use again \eqref{e:rsv00} to deduce the following:

\begin{itemize}

\item if $u_\ell \in \mathsf{CH}_{\ell,3}^{q_*} \subset \mathsf{CH}_\ell \setminus \mathsf{C}_\ell$, then we have $Q_{\rm h}^{q_*}(u_\ell) = q_*$ and $q_*\leq \overline{Q}(u_\ell)$ by $\eqref{e:KingCrimson}_2$;  

\item if $u_\ell \in \mathsf{CH}_\ell^{q_*,\scriptscriptstyle\complement}$, then we have $Q_{\rm h}^{q_*}(u_\ell)=q_\ell$ and $q_*\in(\overline{Q}(u_\ell),\mathring{q}(u_\ell)]$ by \cite[(5.1)]{CR1}; 

\item if $u_\ell \in \mathsf{CH}_{\ell,2}^{q_*} \subset \mathsf{CH}_\ell \cap \mathsf{C}_\ell$, then we have $Q_{\rm h}^{q_*}(u_\ell)=0$ and $q_*> \overline{Q}(u_\ell)$ by $\eqref{e:KingCrimson}_1$.

\end{itemize}
Therefore we deduce that 
\begin{equation}
\label{e:Ghost2}
Q_{\rm h}^{q_*}(u_\ell) = \begin{cases}
q_*&\hbox{ if }q_* \in [0,\overline{Q}(u_\ell)],\\
q_\ell&\hbox{ if }q_* \in (\overline{Q}(u_\ell),\mathring{q}(u_\ell)],\\
0&\hbox{ if }q_* >\overline{Q}(u_\ell).
\end{cases}
\end{equation}
See \figurename~\ref{f:f1} for the graph of $q_*\mapsto Q^{q_*}_{\rm h}(u_\ell)$ in this case. This concludes the proof.
\end{proof}

In \figurename~\ref{f:numCASE1} we show our numerical simulations corresponding to $a=2$, $u_r = (1,-1)$ and
\begin{align}\label{e:data1}
&\hbox{left:}&
&u_i = u_\ell = (2,2),&
&\overline{Q}(u_\ell) = \frac{4}{\sqrt{e}} \approx 2.43,
\\\label{e:data2}
&\hbox{right:}&
&u_i = u_\ell = \left(\frac{1}{4},\frac{5}{2}\right),&
&\overline{Q}(u_\ell) = \frac{5}{2},&
&\hspace{-10mm}
\mathring{q}(u_\ell) = \frac{(10+2 \sqrt{29})^2}{32 e} \approx 4.96.
\end{align}
We notice a very good match with the analytic results, see \eqref{e:Ghost1} and \eqref{e:Ghost2}. The slight deviation from the expected value $0$ (for $q_*$ approximately larger than $5$) in \figurename~\ref{f:numCASE1} on the right is only due to numerical rounding errors.

\begin{figure}[!htb]\centering
\begin{subfigure}[t]{0.35\textwidth}\centering
\resizebox{\linewidth}{!}{
\begin{tikzpicture}[every node/.style={anchor=south west,inner sep=0pt},x=1mm, y=1mm]
\node at (0,0) {\includegraphics[width=60mm,trim=46.5mm 158mm 40mm 47mm, clip]{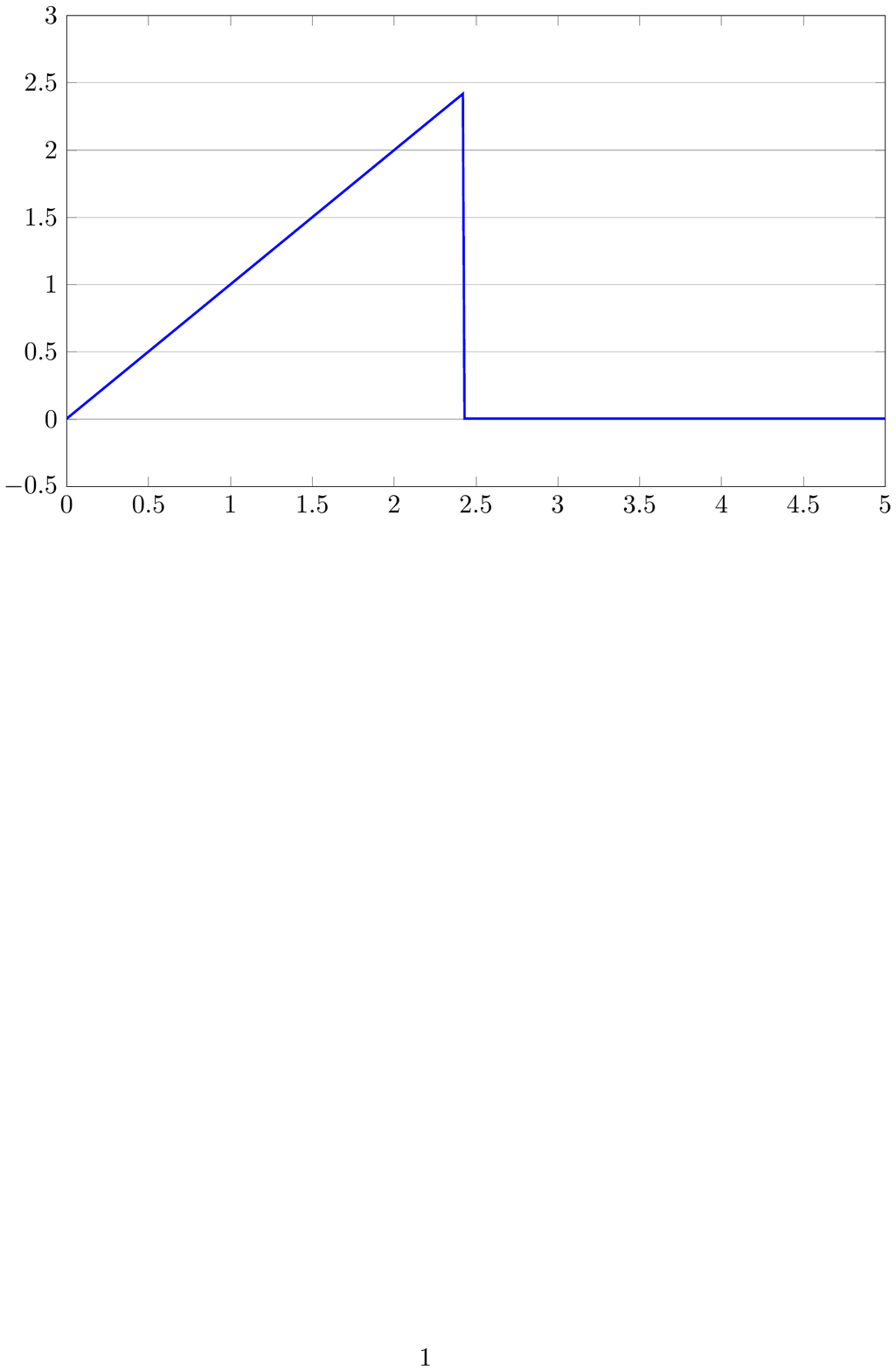}};
\end{tikzpicture}}
\caption*{$q_* \mapsto Q_{\rm h}^{q_*}(u_\ell)$}
\end{subfigure}
\hspace{5mm}
\begin{subfigure}[t]{0.35\textwidth}\centering
\resizebox{\linewidth}{!}{
\begin{tikzpicture}[every node/.style={anchor=south west,inner sep=0pt},x=1mm, y=1mm]
\node at (0,0) {\includegraphics[width=60mm,trim=46.5mm 158mm 40mm 47mm, clip]{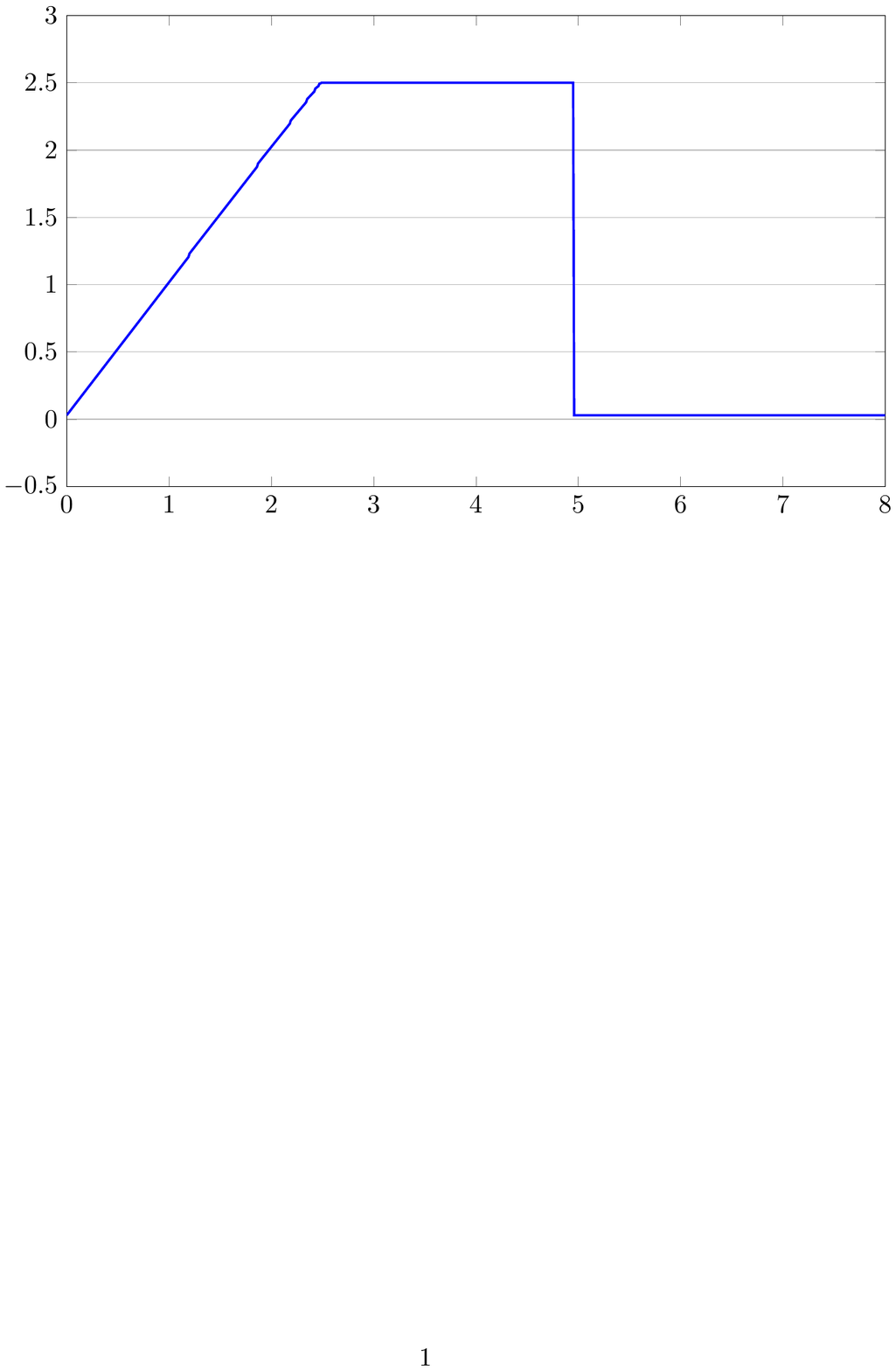}};
\end{tikzpicture}}
\caption*{$q_* \mapsto Q_{\rm h}^{q_*}(u_\ell)$}
\end{subfigure}
\caption{\label{f:numCASE1}{Numerical simulations to the maximization problem \eqref{e:opt1} with $u_i=u_\ell$;  the values of the parameters are as in \eqref{e:data1} and \eqref{e:data2}, respectively.}}
\end{figure}

\subsection{The case \texorpdfstring{$\rsp[u_i,u_\ell]$ is a $2$-shock}{}}
\label{ss:2}

In this subsection we show how to construct, for small times, an explicit solution to the Cauchy problem for system \eqref{e:systemq} with an initial datum as in \eqref{e:ic3}.
We apply $\rsp$, for $x\ne0$, and $\rsh^{q_*}$, at $x=0$, at each discontinuity of the initial datum and at each wave interaction.
As we mentioned above, we focus on a single explicit example; other cases can be handled similarly. We point out that the special case $u_\ell=u_r$ has the advantage of reducing the number of initial parameters; however, the property $u(t,0^-)=u(t,0^+)$ is not preserved when $q_*$ varies, because $\rsh^{q_*}[u_\ell,u_\ell]$ may lead to solutions that do not have such property. For this reason, we do not treat explicitly this special case. 

We assume that at $t=0$ there is no flow on the right of the valve. On the left, instead, we have a supersonic perturbation $u_i$ which is separated from the state $u_\ell$ by a $2$-shock wave moving toward the valve. For simplicity we assume that the state $u_\ell$ is sonic. More precisely, see \figurename~\ref{f:optimization00}, we assume
\begin{align}\label{e:conditions}
&u_\ell \in \mathsf{FL}_2^{u_i},&
&v_r=0<v_\ell=a<v_i,&
&q_r=0 < q_\ell < \tilde{q}\left(u_i,\hat{u}(0,u_\ell)\right) < q_i < \bar{q}\left( \tilde{u}\left(u_i,\hat{u}(0,u_\ell)\right) \right).
\end{align}
\begin{figure}[!htb]\centering
\resizebox{.3\linewidth}{!}{
\begin{tikzpicture}[every node/.style={anchor=south west,inner sep=0pt},x=1mm, y=1mm]
\node at (4,4) {\includegraphics[width=60mm]{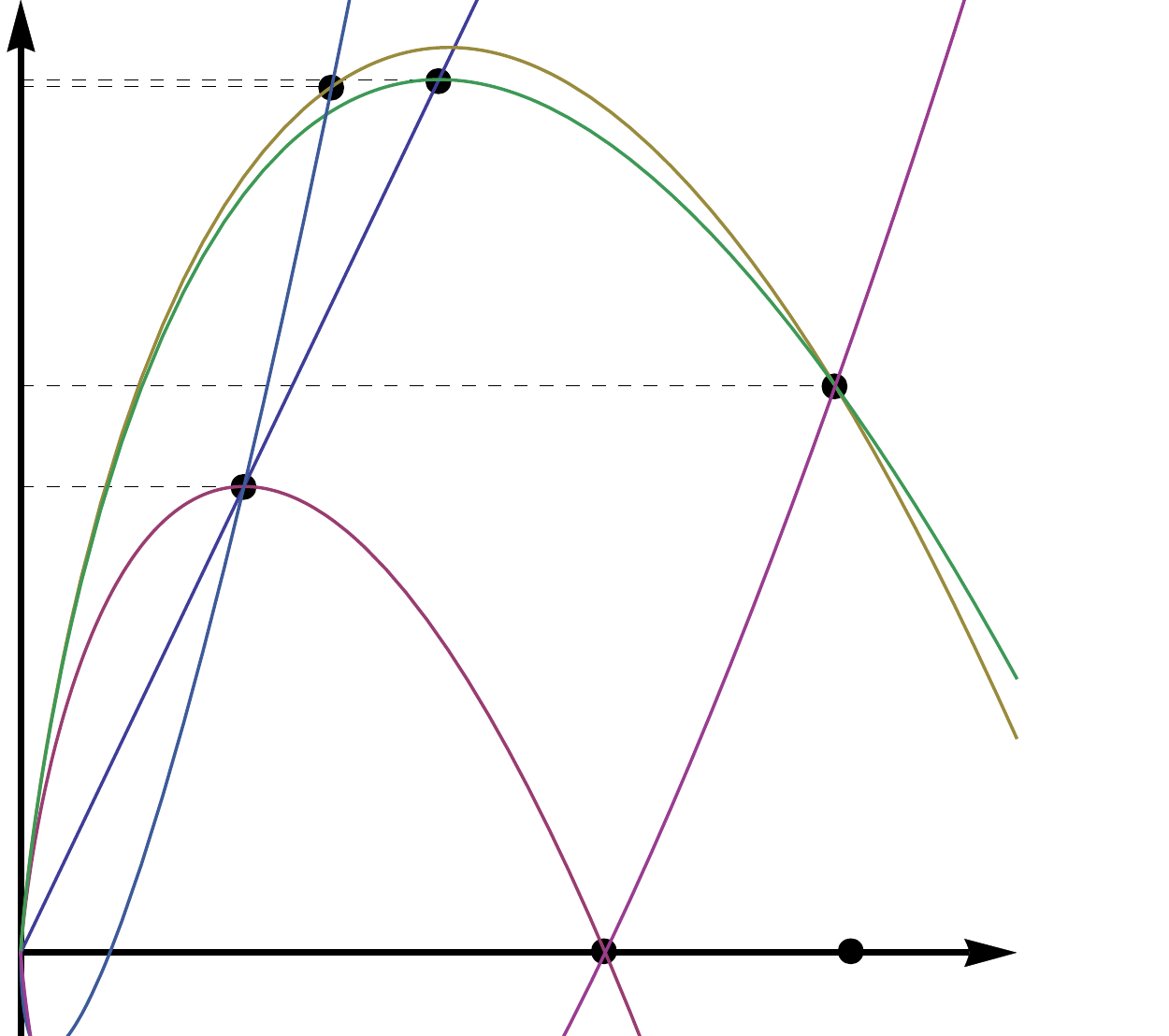}};
\node at (55,3) {\strut $\rho$};
\node at (47,8) {\strut $u_r$};
\node at (0,55) {\strut $q$};
\node[left] at (4,32) {\strut $q_\ell$};
\node[left] at (4,37) {\strut $\tilde{q}$};
\draw[latex-] (4.5,52) -- (0,45) node[left]{\strut $q_i$};
\draw[latex-] (4.5,53) -- (0,53) node[left]{\strut $\bar{q}$};
\node at (37,8) {\strut $\hat{u}$};
\node at (27,48) {\strut $\bar{u}$};
\node at (18,31) {\strut $u_\ell$};
\node at (16,52.5) {\strut $u_i$};
\node at (49,35) {\strut $\tilde{u}$};
\node at (25,57) {\strut $v=a$};
\node at (6,-1) {\strut $\mathsf{FL}_2^{u_i}$};
\node at (26,-1) {\strut $\mathsf{BL}_2^{\hat{u}}$};
\node[right] at (56,25) {\strut $\mathsf{FL}_1^{\tilde{u}_1}$};
\node[right] at (56,17) {\strut $\mathsf{FL}_1^{u_i}$};
\node at (37,-1) {\strut $\mathsf{FL}_1^{u_\ell}$};
\end{tikzpicture}}
\caption{The states $u_i$, $u_\ell$ and $u_r$ given in \eqref{e:data02-A}, \eqref{e:data02-B} satisfy \eqref{e:conditions}. We denote $\hat{u} \doteq \hat{u}(0,u_\ell)$, $\tilde{u} \doteq \tilde{u}\left(u_i,\hat{u}(0,u_\ell)\right)$ and $\bar{u} \doteq \bar{u}\left( \tilde{u}\left(u_i,\hat{u}(0,u_\ell)\right) \right)$.
}
\label{f:optimization00}
\end{figure}

\subsubsection{The explicit solution for small times}\label{subsub:las} 
We now construct an {\em exact} and {\em explicit} solution to the initial-value problem \eqref{e:system}, \eqref{e:ic3} for small times, under the assumptions in \eqref{e:conditions}. Since an interaction involving a rarefaction wave is complicate to handle explicitly, we stop the construction when such interactions occur.
We distinguish four cases; we emphasize that in the following pictures also the interaction patterns in the space $(x,t)$ (point coordinates and slopes) are {\em exact} and not merely representative. We refer to \figurename~\ref{f:optimization}.

\begin{figure}[!htb]\centering
\begin{subfigure}[t]{0.235\textwidth}
\centering
\resizebox{\linewidth}{!}{
\begin{tikzpicture}[every node/.style={anchor=south west,inner sep=0pt},x=1mm, y=1mm]
\node at (4,4) {\includegraphics[width=60mm]{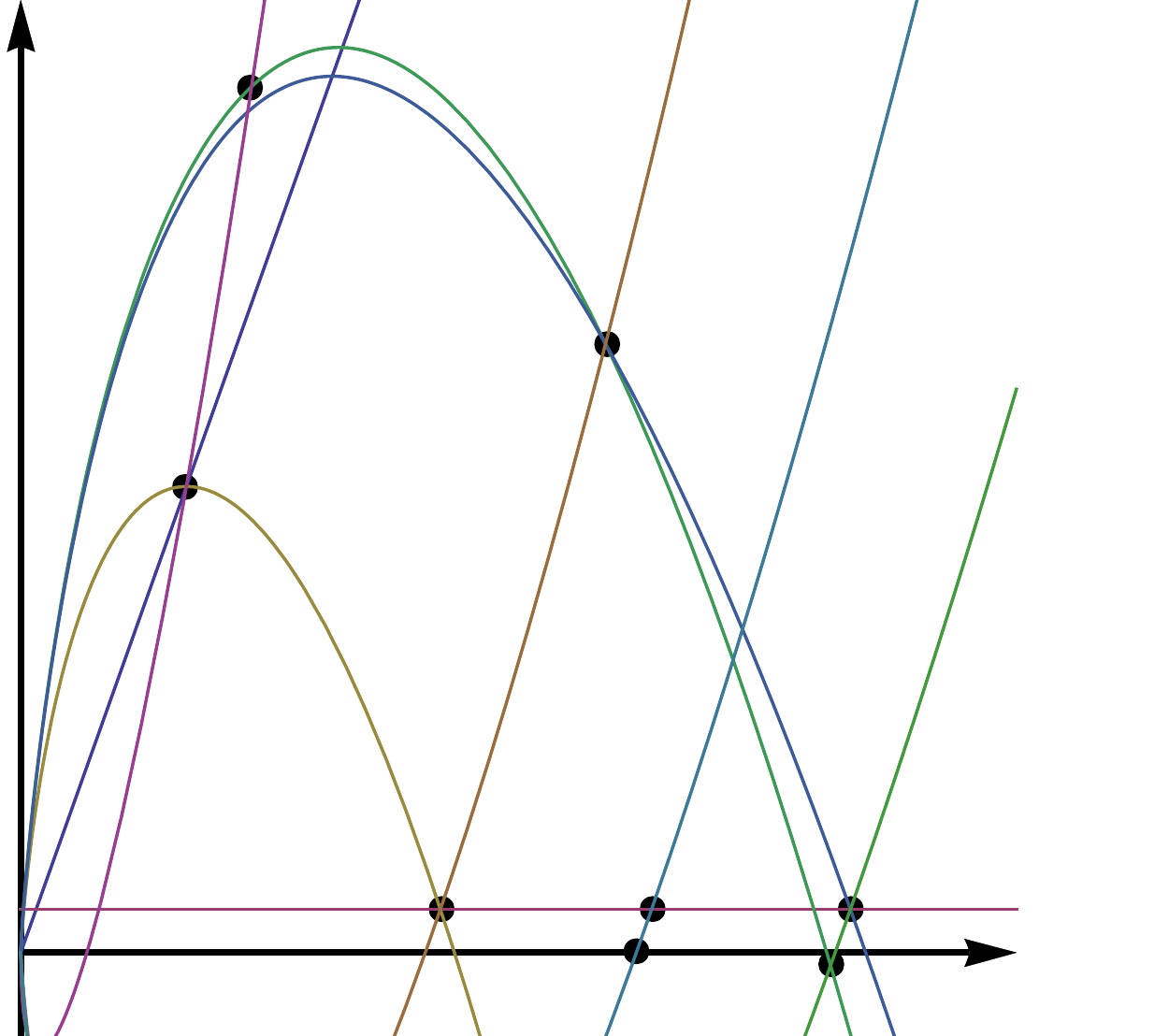}};
\node at (55,3) {\strut $\rho$};
\node at (41.5,3) {\strut $\tilde{u}_2$};
\node at (37,3) {\strut $u_r$};
\node at (0,55) {\strut $q$};
\node at (0,9) {\strut $q_*$};
\node at (49,10) {\strut $\hat{u}_2$};
\node at (33,10) {\strut $\check{u}_1$};
\node at (22,10) {\strut $\hat{u}_1$};
\node at (15,31) {\strut $u_\ell$};
\node at (12,52) {\strut $u_i$};
\node at (37,38) {\strut $\tilde{u}_1$};
\node at (22,57) {\strut $v=a$};
\node at (13.5,57) {\strut $\mathsf{FL}_2^{u_i}$};
\node at (38,57) {\strut $\mathsf{BL}_2^{\tilde{u}_1}$};
\node at (50,57) {\strut $\mathsf{BL}_2^{u_r}$};
\node at (55,37) {\strut $\mathsf{BL}_2^{\hat{u}_2}$};
\draw[latex-] (44.4,20) -- (55,25) node[right] {\strut $\mathsf{FL}_1^{\tilde{u}_1}$};
\draw[latex-] (44.5,15) -- (55,20) node[right] {\strut $\mathsf{FL}_1^{u_i}$};
\node at (26,-1) {\strut $\mathsf{FL}_1^{u_\ell}$};
\end{tikzpicture}}
\\
\resizebox{\linewidth}{!}{\begin{tikzpicture}[every node/.style={anchor=south west,inner sep=0pt},x=1mm, y=1mm]
\node at (4,4) {\includegraphics[width=60mm]{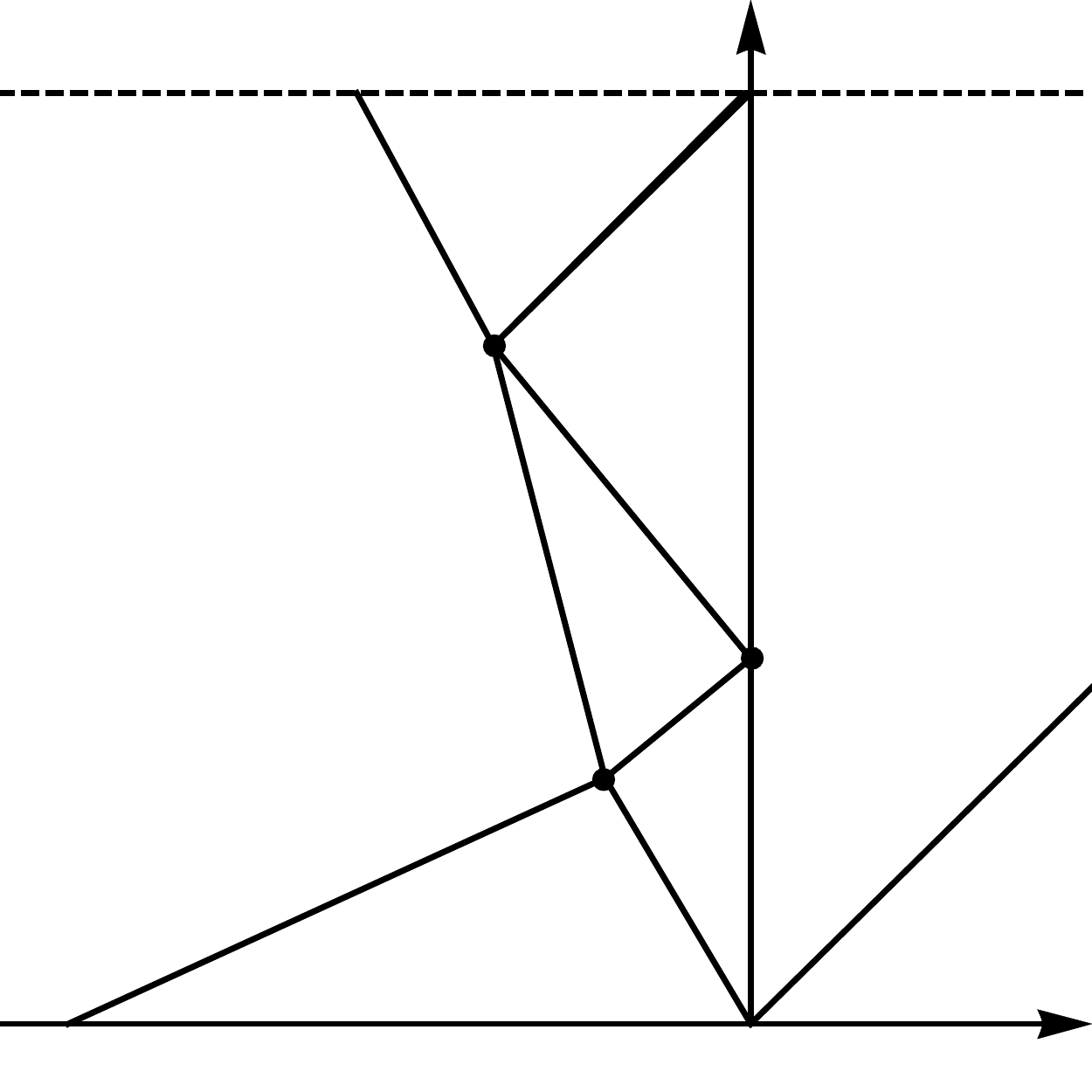}};
\node[right] at (4,61) {\strut $T_a^{q_*}$};
\node at (63,2) {\strut $x$};
\node at (5,2) {\strut $-1$};
\node at (31,52) {\strut $\tilde{u}_2$};
\node at (60,13) {\strut $u_r$};
\node at (41,61) {\strut $t$};
\node at (38,40) {\strut $\hat{u}_2$};
\node at (52,38) {\strut $\check{u}_1$};
\node at (40,17) {\strut $\hat{u}_1$};
\node at (30,10) {\strut $u_\ell$};
\node at (12,38) {\strut $u_i$};
\node at (37,26) {\strut $\tilde{u}_1$};
\node at (32,21) {\strut $P_1$};
\node at (47,26) {\strut $P_2$};
\node at (26,41) {\strut $P_3$};
\end{tikzpicture}}
\\
\resizebox{\linewidth}{!}{\begin{tikzpicture}[every node/.style={anchor=south west,inner sep=0pt},x=1mm, y=1mm]
\node at (4,4) {\includegraphics[width=60mm]{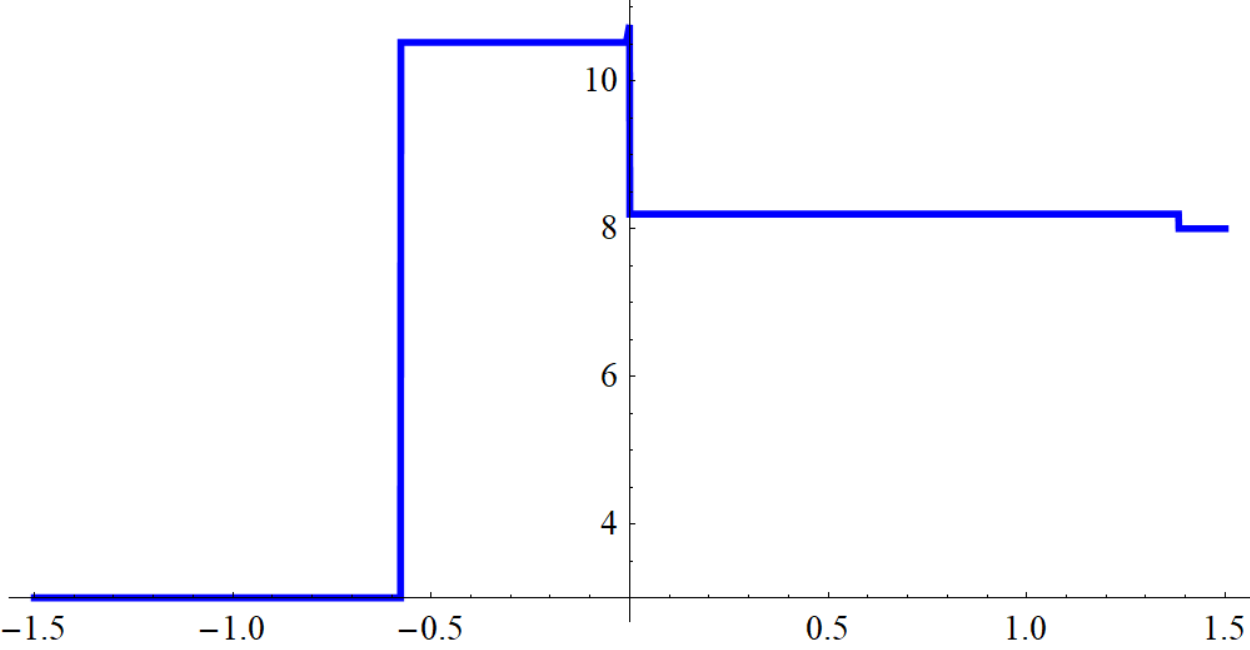}};
\node[below] at (30,0) {\strut $x\mapsto \rho(T_a^{q_*},x)$};
\end{tikzpicture}}
\\[10pt]
\resizebox{\linewidth}{!}{\begin{tikzpicture}[every node/.style={anchor=south west,inner sep=0pt},x=1mm, y=1mm]
\node at (4,4) {\includegraphics[width=60mm]{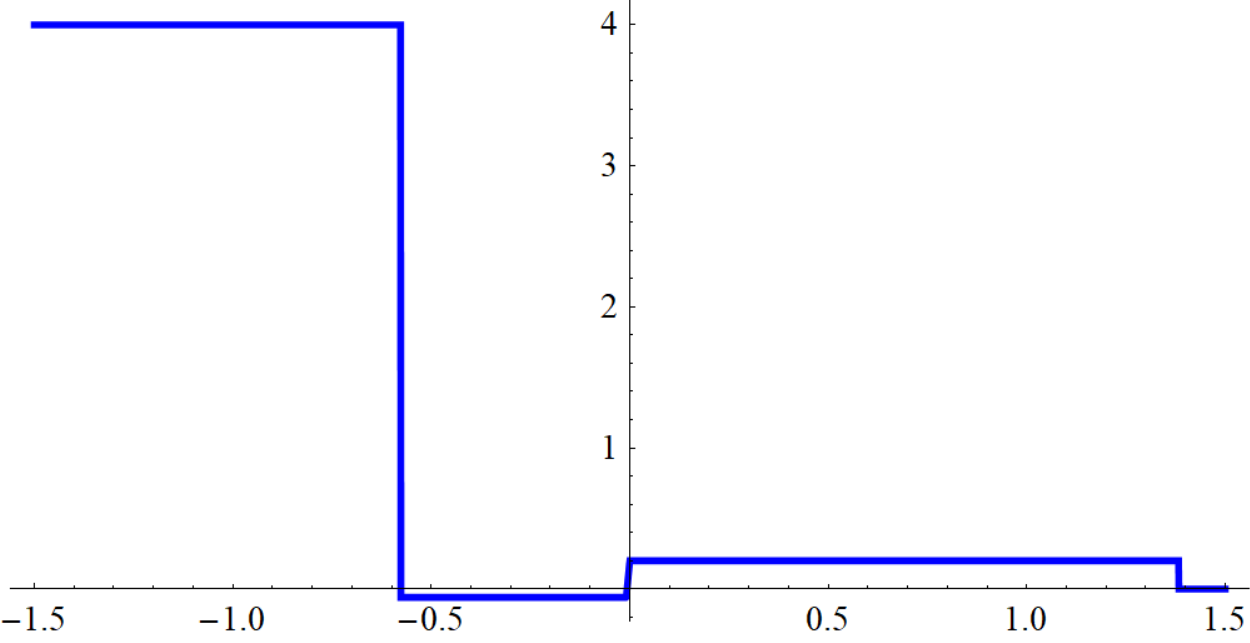}};
\node[below] at (30,0) {\strut $x\mapsto q(T_a^{q_*},x)$};
\end{tikzpicture}}
\caption*{\ref{case:A}}
\end{subfigure}
\hfill
\begin{subfigure}[t]{0.235\textwidth}
\centering
\resizebox{\linewidth}{!}{\begin{tikzpicture}[every node/.style={anchor=south west,inner sep=0pt},x=1mm, y=1mm]
\node at (4,4) {\includegraphics[width=60mm]{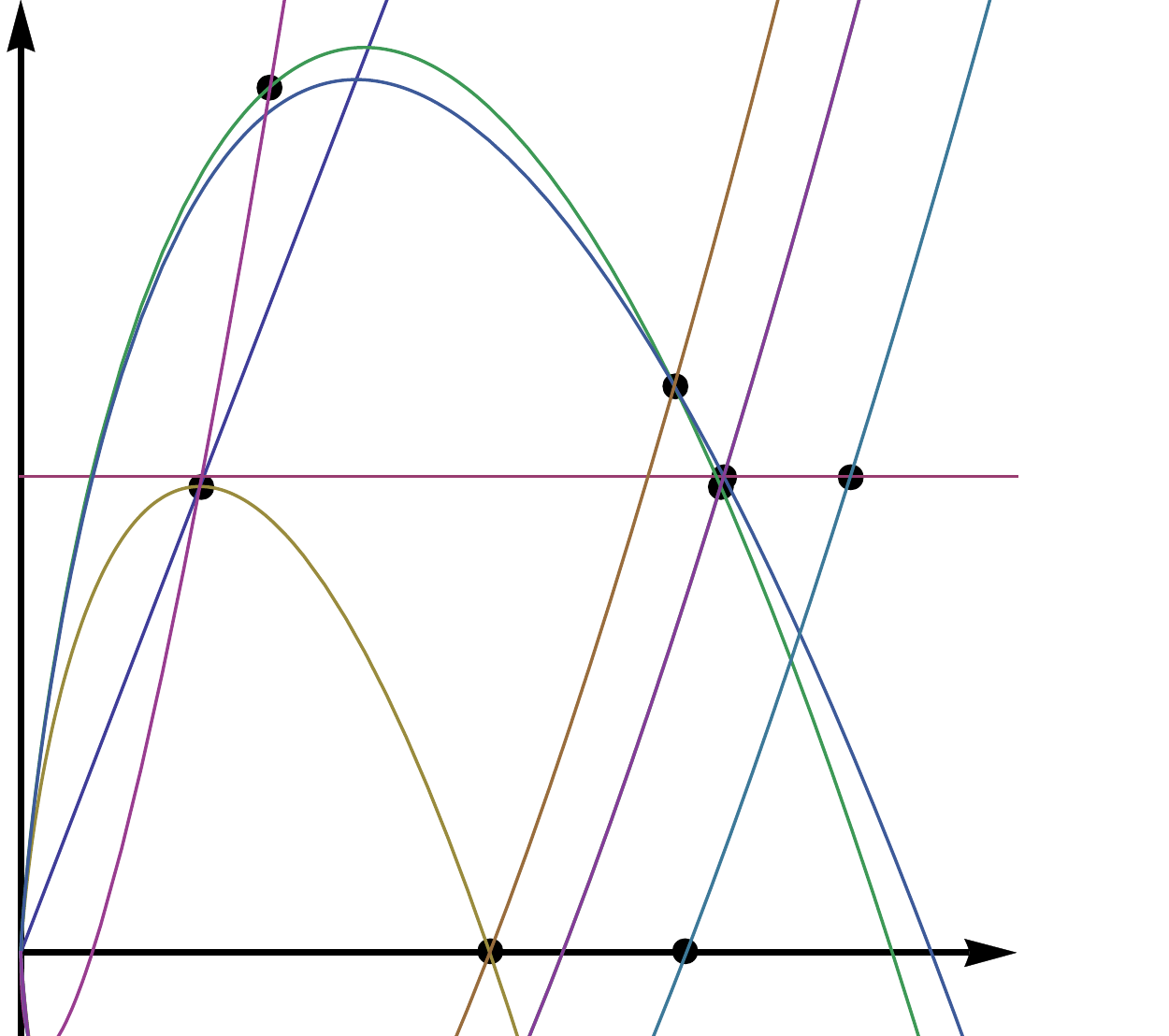}};
\node at (55,3) {\strut $\rho$};
\node at (37.5,27.5) {\strut $\tilde{u}_2$};
\node at (39,3) {\strut $u_r$};
\node at (0,55) {\strut $q$};
\node at (0,31) {\strut $q_*$};
\node at (42,32) {\strut $\hat{u}_2$};
\node at (49,32) {\strut $\check{u}_1$};
\node at (24,8) {\strut $\hat{u}_1$};
\node at (14.5,27) {\strut $u_\ell$};
\node at (12,52) {\strut $u_i$};
\node at (34,35) {\strut $\tilde{u}_1$};
\node at (22,57) {\strut $v=a$};
\node at (13.5,57) {\strut $\mathsf{FL}_2^{u_i}$};
\node at (38,57) {\strut $\mathsf{BL}_2^{\tilde{u}_1}$};
\node at (46,57) {\strut $\mathsf{BL}_2^{\hat{u}_2}$};
\node at (54,57) {\strut $\mathsf{BL}_2^{u_r}$};
\draw[latex-] (47.5,20) -- (55,25) node[right] {\strut $\mathsf{FL}_1^{\tilde{u}_1}$};
\draw[latex-] (48,15) -- (55,20) node[right] {\strut $\mathsf{FL}_1^{u_i}$};
\node at (26,-1) {\strut $\mathsf{FL}_1^{u_\ell}$};
\end{tikzpicture}}
\\
\resizebox{\linewidth}{!}{\begin{tikzpicture}[every node/.style={anchor=south west,inner sep=0pt},x=1mm, y=1mm]
\node at (4,4) {\includegraphics[width=60mm]{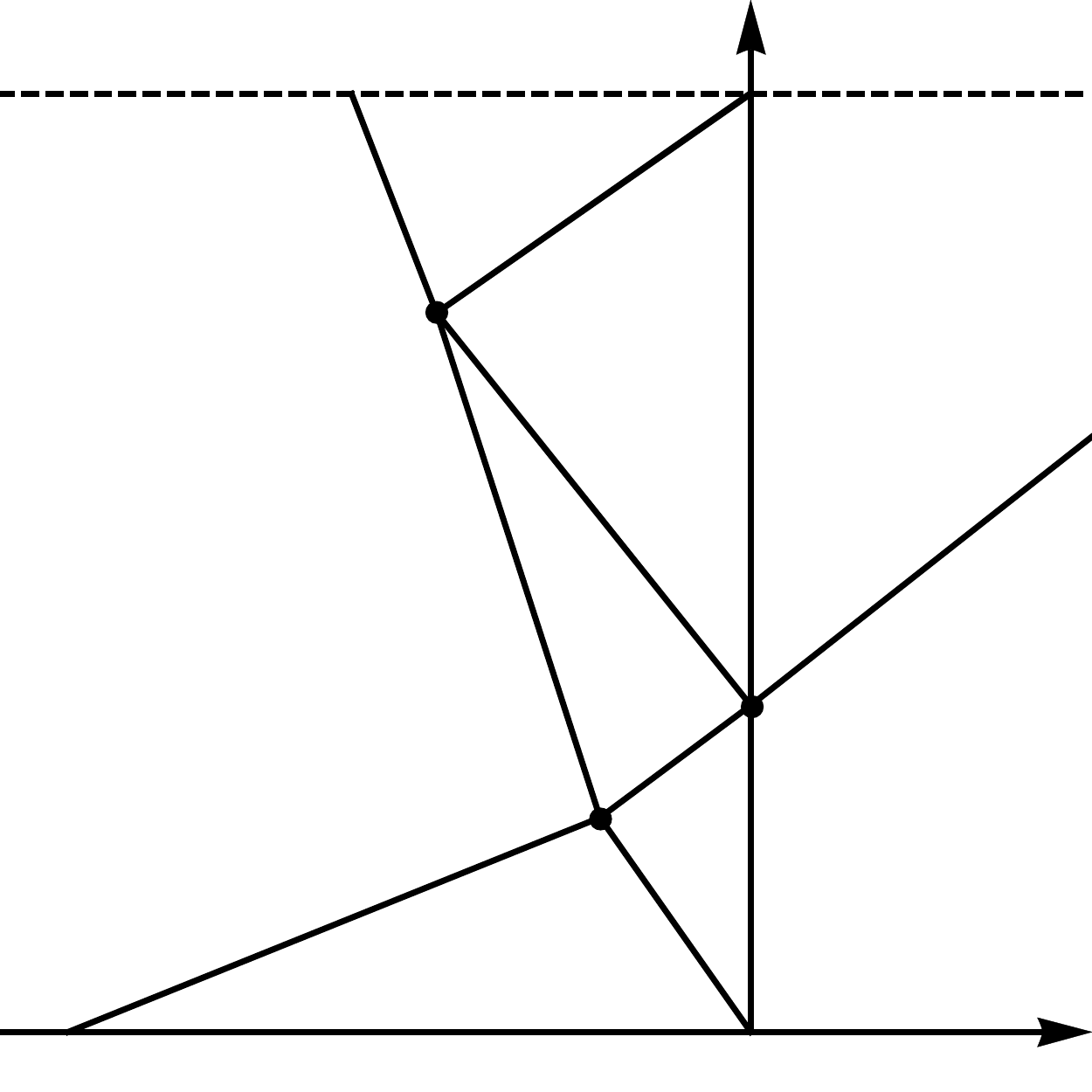}};
\node[right] at (4,61) {\strut $T_b^{q_*}$};
\node at (63,2) {\strut $x$};
\node at (5,2) {\strut $-1$};
\node at (29,52) {\strut $\tilde{u}_2$};
\node at (60,13) {\strut $u_r$};
\node at (41,61) {\strut $t$};
\node at (38,40) {\strut $\hat{u}_2$};
\node at (52,42) {\strut $\check{u}_1$};
\node at (41,15) {\strut $\hat{u}_1$};
\node at (30,10) {\strut $u_\ell$};
\node at (12,38) {\strut $u_i$};
\node at (36,26) {\strut $\tilde{u}_1$};
\node at (32,18.5) {\strut $P_1$};
\node at (47,22) {\strut $P_2$};
\node at (23,43) {\strut $P_3$};
\end{tikzpicture}}
\\
\resizebox{\linewidth}{!}{\begin{tikzpicture}[every node/.style={anchor=south west,inner sep=0pt},x=1mm, y=1mm]
\node at (4,4) {\includegraphics[width=60mm]{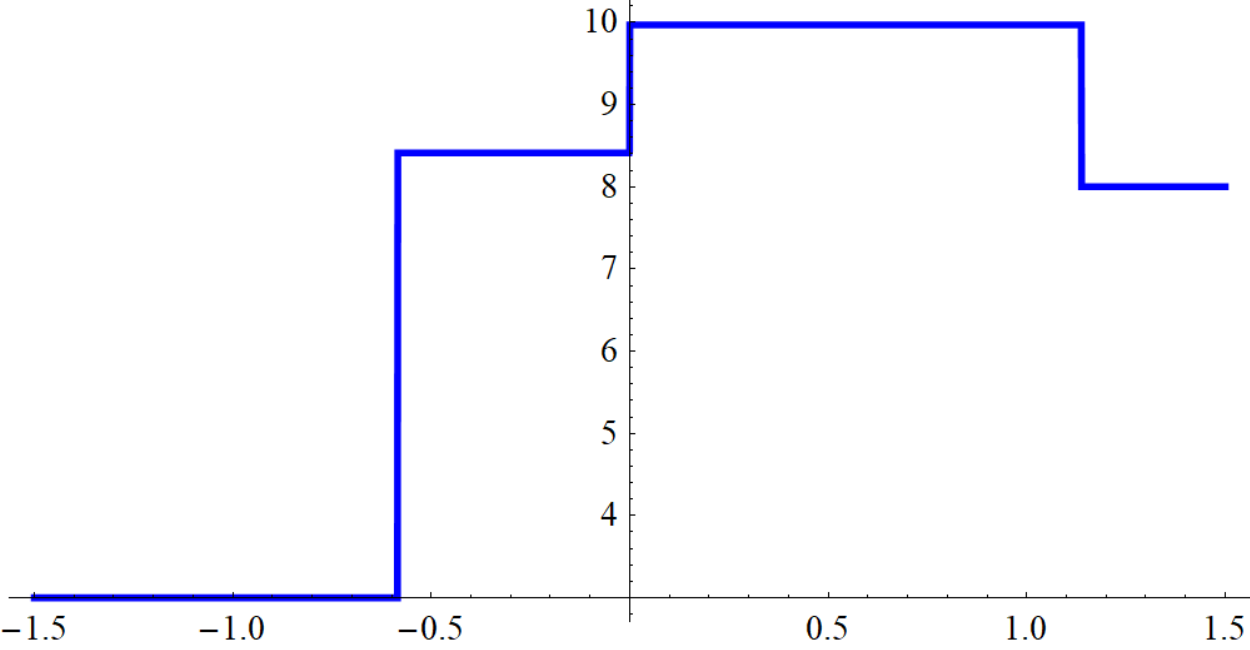}};
\node[below] at (30,0) {\strut $x\mapsto \rho(T_b^{q_*},x)$};
\end{tikzpicture}}
\\[10pt]
\resizebox{\linewidth}{!}{\begin{tikzpicture}[every node/.style={anchor=south west,inner sep=0pt},x=1mm, y=1mm]
\node at (4,4) {\includegraphics[width=60mm]{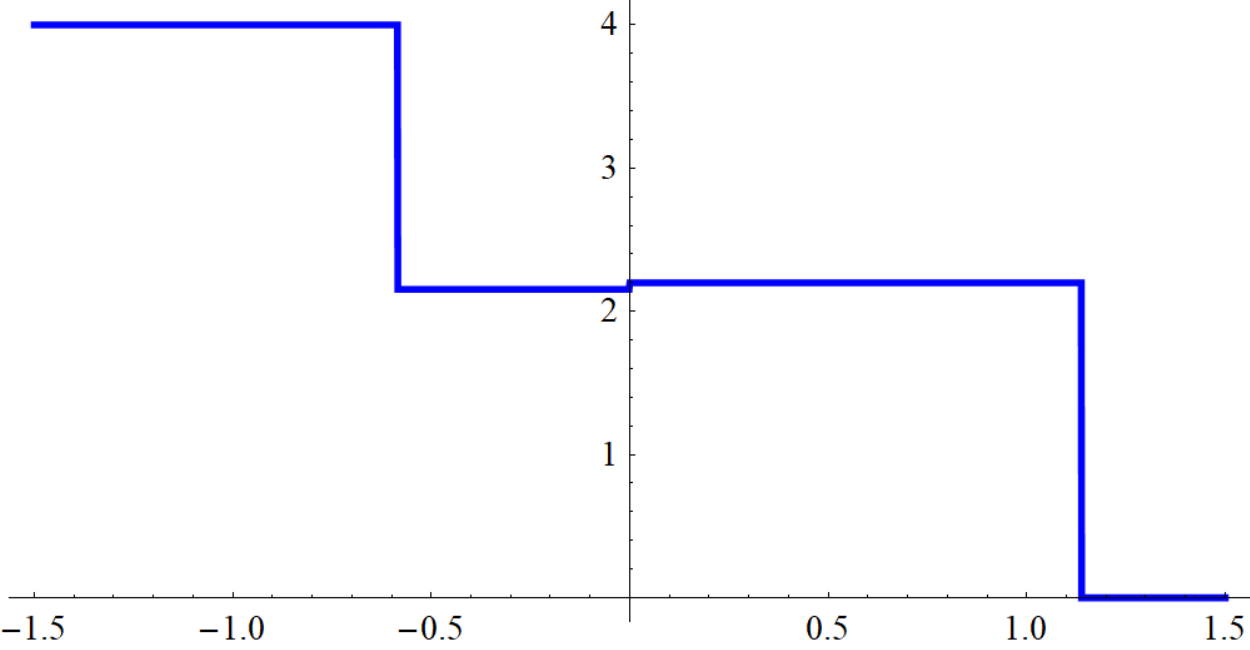}};
\node[below] at (30,0) {\strut $x\mapsto q(T_b^{q_*},x)$};
\end{tikzpicture}}
\caption*{\ref{case:B}}
\end{subfigure}
\hfill
\begin{subfigure}[t]{0.235\textwidth}
\centering
\resizebox{\linewidth}{!}{\begin{tikzpicture}[every node/.style={anchor=south west,inner sep=0pt},x=1mm, y=1mm]
\node at (4,4) {\includegraphics[width=60mm]{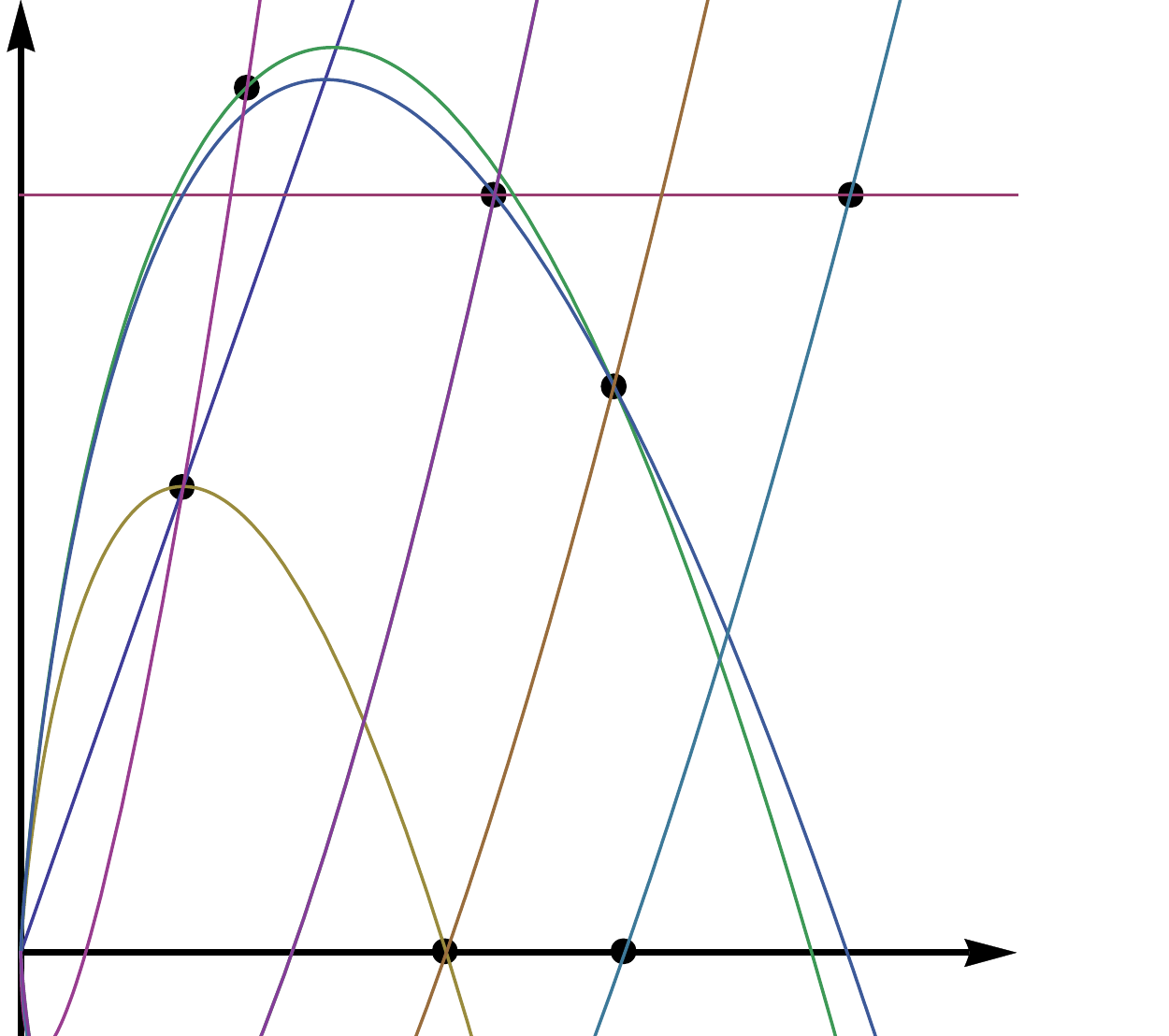}};
\node at (55,3) {\strut $\rho$};
\node at (37,3) {\strut $u_r$};
\node at (0,55) {\strut $q$};
\node at (0,45) {\strut $q_*$};
\node at (30.5,47) {\strut $\hat{u}_2$};
\node at (49,47) {\strut $\check{u}_1$};
\node at (28.5,8) {\strut $\hat{u}_1$};
\node at (14.5,31) {\strut $u_\ell$};
\node at (12,52) {\strut $u_i$};
\node at (37,35) {\strut $\tilde{u}_1$};
\node at (22,57) {\strut $v=a$};
\node at (13.5,57) {\strut $\mathsf{FL}_2^{u_i}$};
\node at (38,57) {\strut $\mathsf{BL}_2^{\tilde{u}_1}$};
\node at (48,57) {\strut $\mathsf{BL}_2^{u_r}$};
\node at (15,-1) {\strut $\mathsf{BL}_2^{\hat{u}_2}$};
\draw[latex-] (43.5,20) -- (50,25) node[right] {\strut $\mathsf{FL}_1^{\tilde{u}_1}$};
\draw[latex-] (43.8,15) -- (50,20) node[right] {\strut $\mathsf{FL}_1^{u_i}$};
\node at (26,-1) {\strut $\mathsf{FL}_1^{u_\ell}$};
\end{tikzpicture}}
\\
\resizebox{\linewidth}{!}{\begin{tikzpicture}[every node/.style={anchor=south west,inner sep=0pt},x=1mm, y=1mm]
\node at (4,4) {\includegraphics[width=60mm]{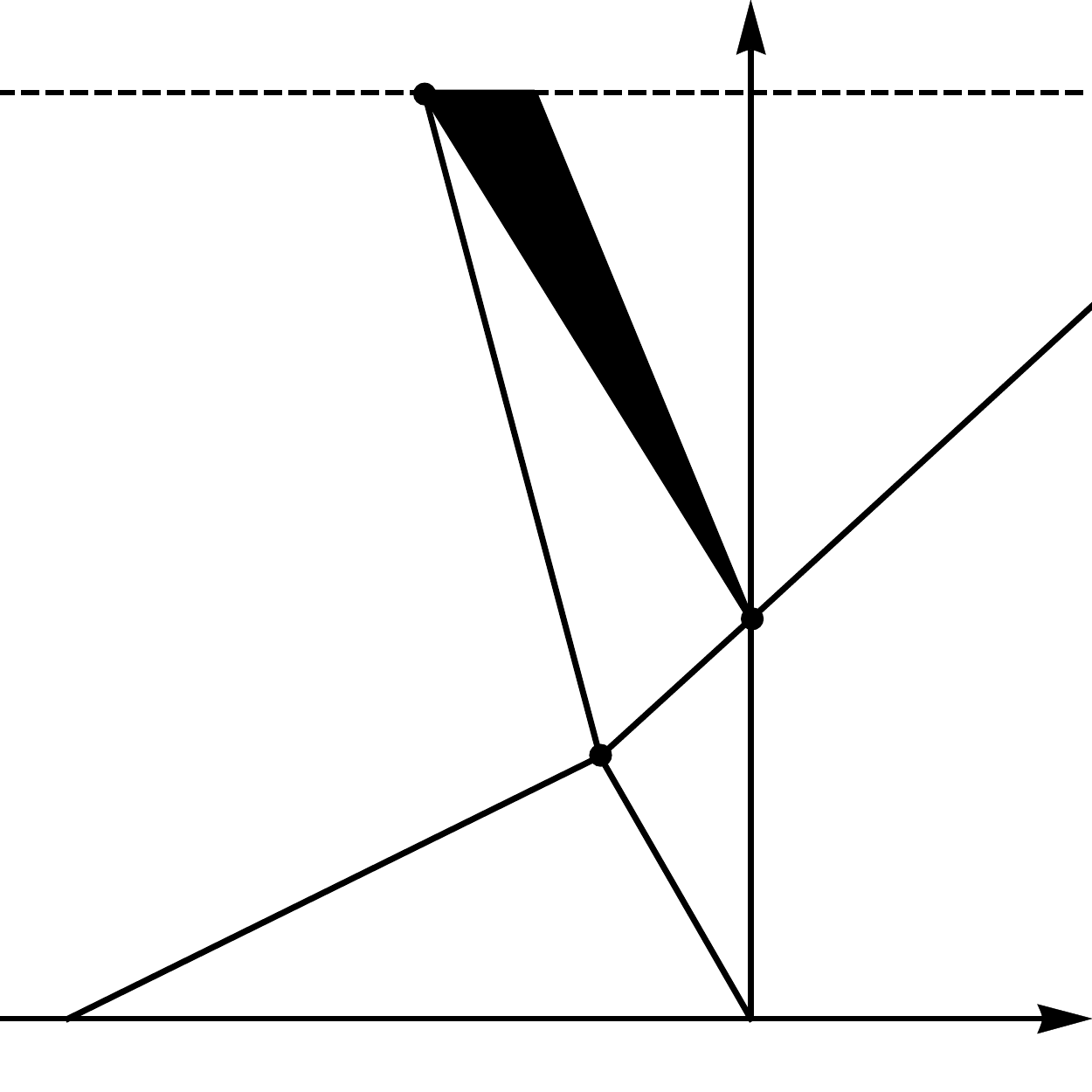}};
\node[right] at (4,61) {\strut $T_c^{q_*}$};
\node at (63,2) {\strut $x$};
\node at (5,2) {\strut $-1$};
\node at (60,13) {\strut $u_r$};
\node at (41,61) {\strut $t$};
\node at (40,50) {\strut $\hat{u}_2$};
\node at (52,50) {\strut $\check{u}_1$};
\node at (41,19) {\strut $\hat{u}_1$};
\node at (30,10) {\strut $u_\ell$};
\node at (12,38) {\strut $u_i$};
\node at (36,30) {\strut $\tilde{u}_1$};
\node at (32,22) {\strut $P_1$};
\node at (47,26) {\strut $P_2$};
\node at (23,54) {\strut $P_3$};
\end{tikzpicture}}
\\
\resizebox{\linewidth}{!}{\begin{tikzpicture}[every node/.style={anchor=south west,inner sep=0pt},x=1mm, y=1mm]
\node at (4,4) {\includegraphics[width=60mm]{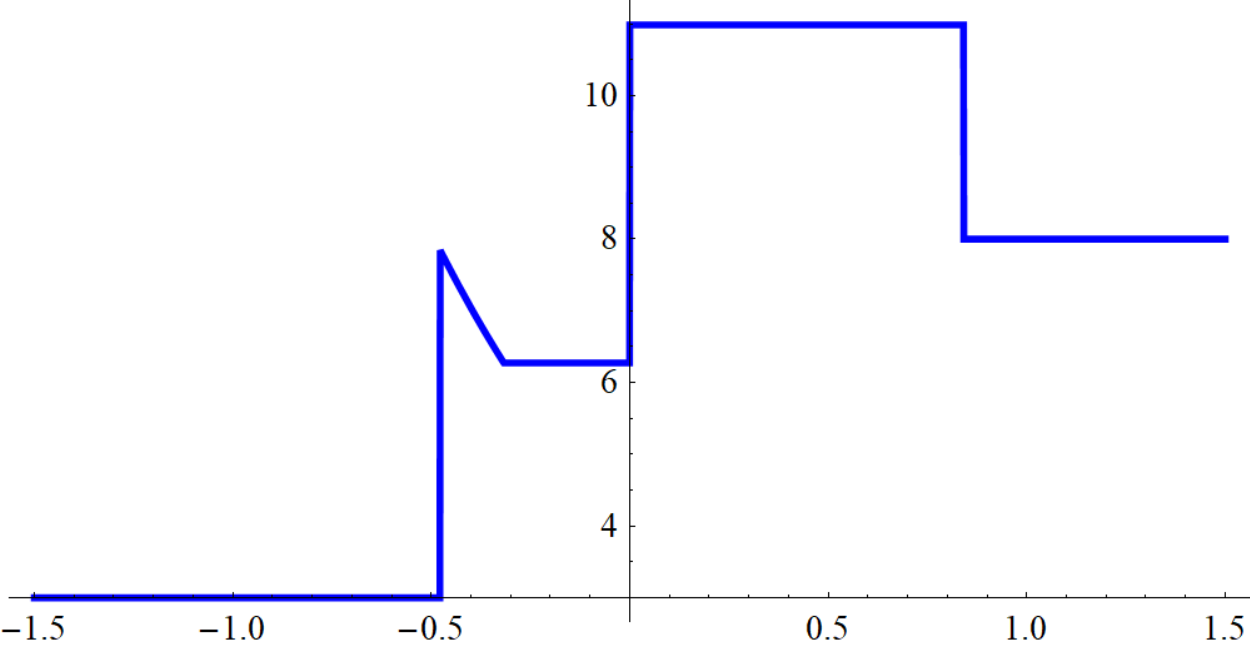}};
\node[below] at (30,0) {\strut $x\mapsto \rho(T_c^{q_*},x)$};
\end{tikzpicture}}
\\[10pt]
\resizebox{\linewidth}{!}{\begin{tikzpicture}[every node/.style={anchor=south west,inner sep=0pt},x=1mm, y=1mm]
\node at (4,4) {\includegraphics[width=60mm]{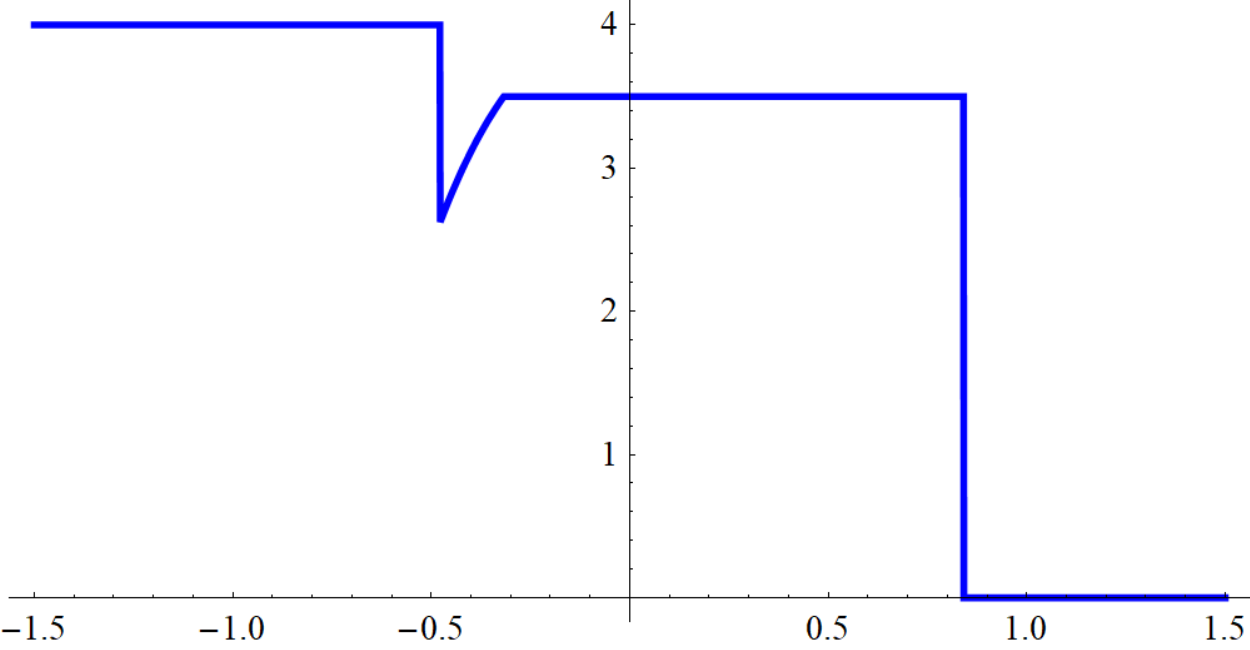}};
\node[below] at (30,0) {\strut $x\mapsto q(T_c^{q_*},x)$};
\end{tikzpicture}}
\caption*{\ref{case:C}}
\end{subfigure}
\hfill
\begin{subfigure}[t]{0.235\textwidth}
\centering
\resizebox{\linewidth}{!}{\begin{tikzpicture}[every node/.style={anchor=south west,inner sep=0pt},x=1mm, y=1mm]
\node at (4,4) {\includegraphics[width=60mm]{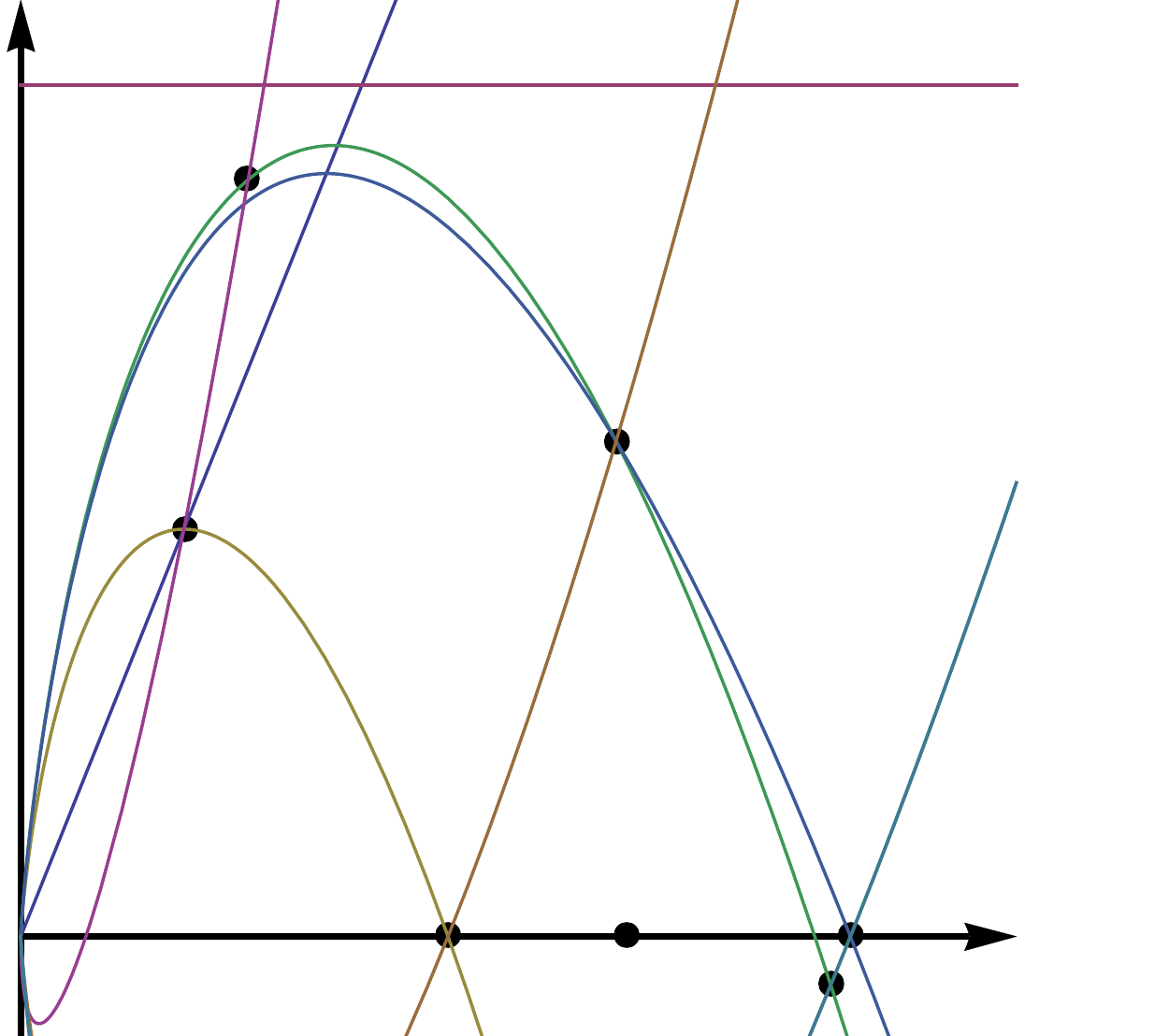}};
\node at (55,3) {\strut $\rho$};
\node at (42,4) {\strut $\tilde{u}_2$};
\node at (35,4) {\strut $u_r$};
\node at (0,55) {\strut $q$};
\node at (0,51) {\strut $q_*$};
\node at (49,8.5) {\strut $\hat{u}_2$};
\node at (22.5,8.5) {\strut $\hat{u}_1$};
\node at (15,28.5) {\strut $u_\ell$};
\node at (12,47) {\strut $u_i$};
\node at (37,32) {\strut $\tilde{u}_1$};
\node at (22,57) {\strut $v=a$};
\node at (14,57) {\strut $\mathsf{FL}_2^{u_i}$};
\node at (38,57) {\strut $\mathsf{BL}_2^{\hat{u}_1}$};
\node at (53,32) {\strut $\mathsf{BL}_2^{\hat{u}_2}$};
\draw[latex-] (43.5,20) -- (55,25) node[right] {\strut $\mathsf{FL}_1^{\tilde{u}_1}$};
\draw[latex-] (44.1,15) -- (55,20) node[right] {\strut $\mathsf{FL}_1^{u_i}$};
\node at (26,-1) {\strut $\mathsf{FL}_1^{u_\ell}$};
\end{tikzpicture}}
\\
\resizebox{\linewidth}{!}{\begin{tikzpicture}[every node/.style={anchor=south west,inner sep=0pt},x=1mm, y=1mm]
\node at (4,4) {\includegraphics[width=60mm]{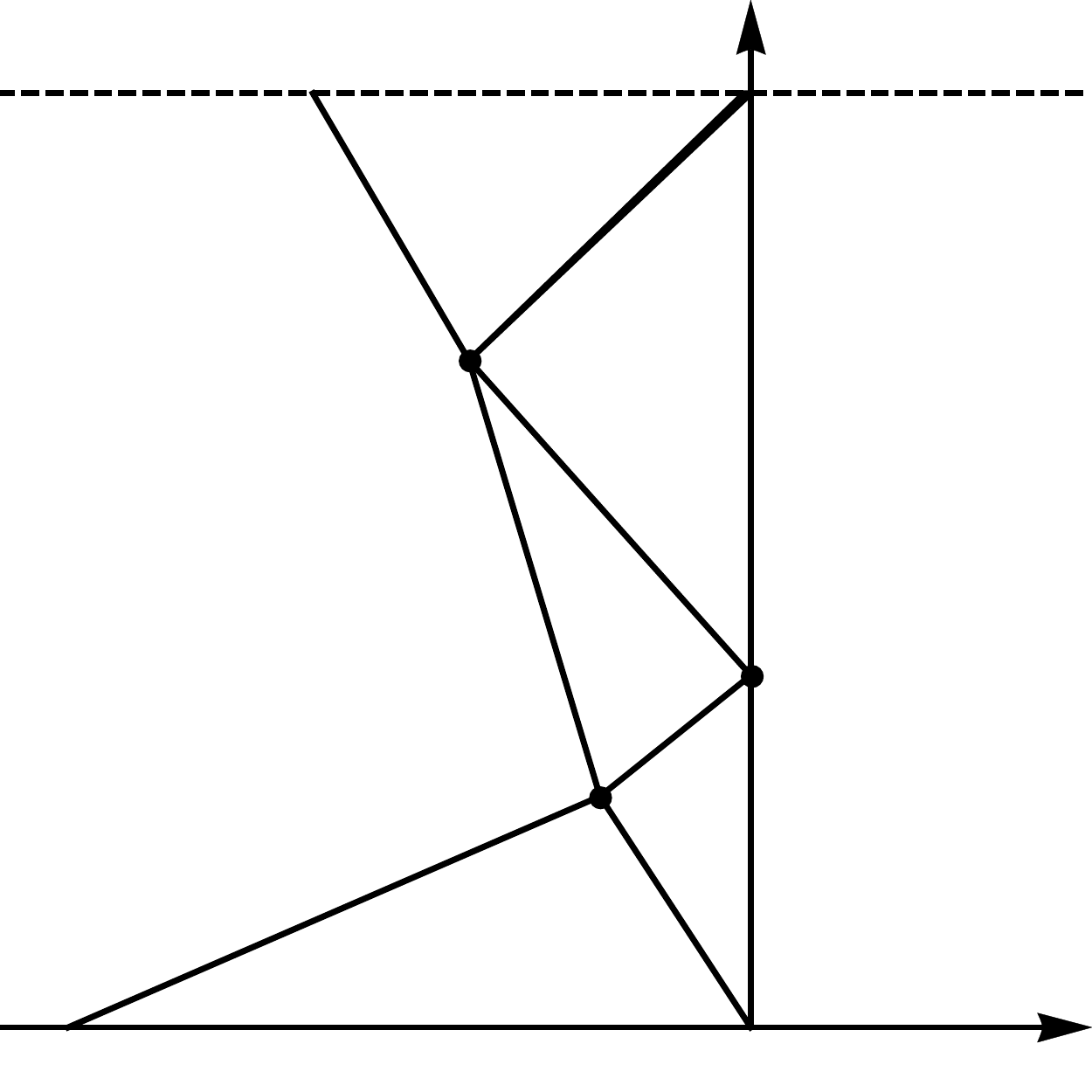}};
\node[right] at (4,61) {\strut $T_d^{q_*}$};
\node at (63,2) {\strut $x$};
\node at (5,2) {\strut $-1$};
\node at (29,52) {\strut $\tilde{u}_2$};
\node at (60,40) {\strut $u_r$};
\node at (41,61) {\strut $t$};
\node at (38,40) {\strut $\hat{u}_2$};
\node at (41,15) {\strut $\hat{u}_1$};
\node at (30,10) {\strut $u_\ell$};
\node at (12,38) {\strut $u_i$};
\node at (36,26) {\strut $\tilde{u}_1$};
\node at (32,19.5) {\strut $P_1$};
\node at (47,24) {\strut $P_2$};
\node at (24,42) {\strut $P_3$};
\end{tikzpicture}}
\\
\resizebox{\linewidth}{!}{\begin{tikzpicture}[every node/.style={anchor=south west,inner sep=0pt},x=1mm, y=1mm]
\node at (4,4) {\includegraphics[width=60mm]{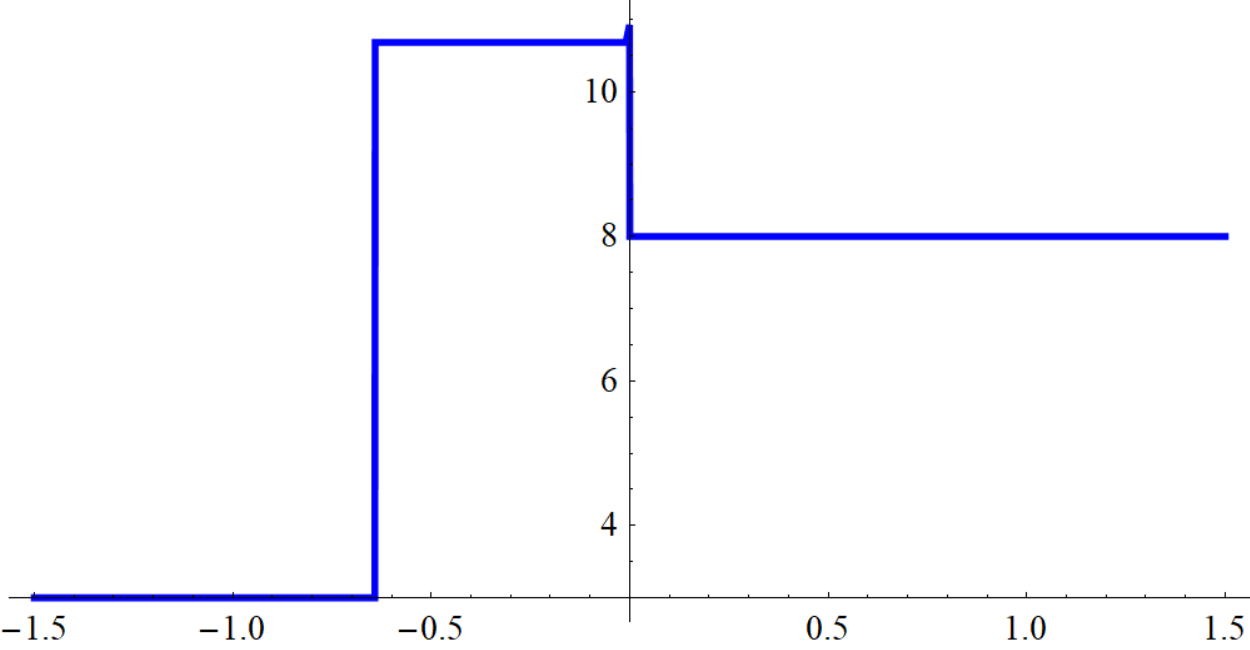}};
\node[below] at (30,0) {\strut $x\mapsto \rho(T_d^{q_*},x)$};
\end{tikzpicture}}
\\[10pt]
\resizebox{\linewidth}{!}{\begin{tikzpicture}[every node/.style={anchor=south west,inner sep=0pt},x=1mm, y=1mm]
\node at (4,4) {\includegraphics[width=60mm]{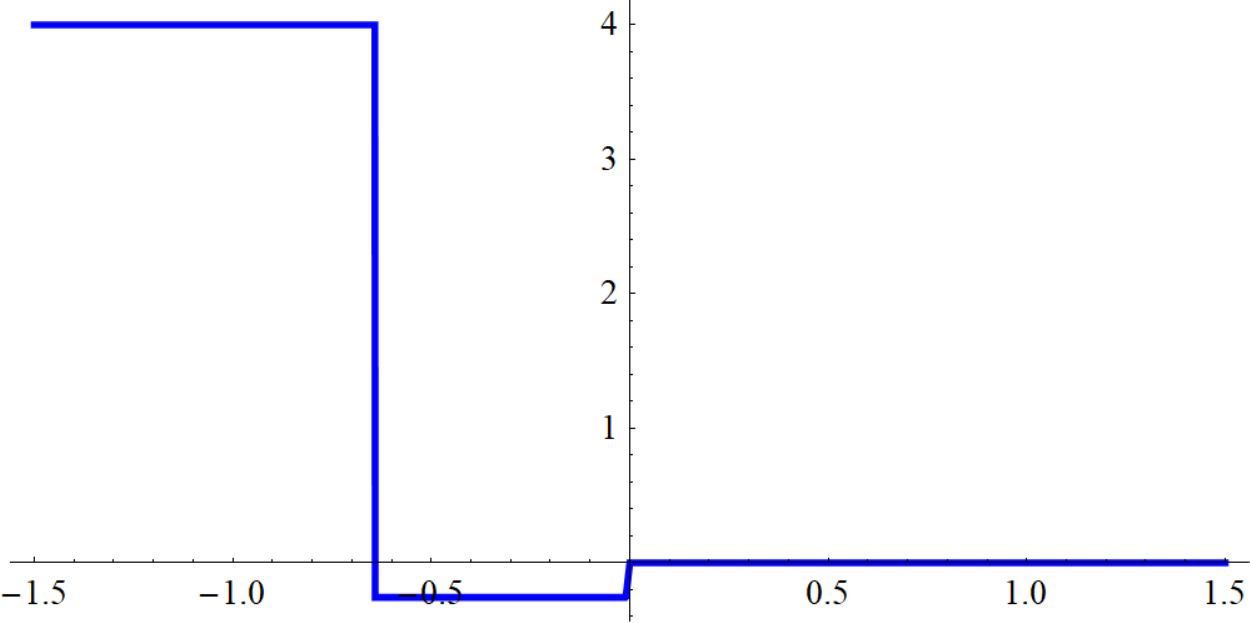}};
\node[below] at (30,0) {\strut $x\mapsto q(T_d^{q_*},x)$};
\end{tikzpicture}}
\caption*{\ref{case:D}}
\end{subfigure}
\caption{Cases considered in Subsection~\ref{subsub:las}. Notation is as in \eqref{e:ns} and in the text.
The values of the involved states are listed at the end of Subsection~\ref{subsub:las}.}
\label{f:optimization}
\end{figure}

\begin{enumerate}[label={\bf Case~(\alph*)}, wide=0pt]

\item\label{case:A}~\\
Assume $q_*\in[0,q_\ell]$. For notational simplicity, we denote
\begin{align}
\label{e:ns}
&\hat{u}_1 \doteq \hat{u}(q_*,u_\ell),& 
&\check{u}_1 \doteq \check{u}(q_*,u_r),& 
&\tilde{u}_1 \doteq \tilde{u}(u_i,\hat{u}_1),& 
&\hat{u}_2 \doteq \hat{u}(q_*,\tilde{u}_1),&
&\tilde{u}_2 \doteq \tilde{u}(u_i,\hat{u}_2).&
\end{align}
At time $t=0$ a $2$-shock with positive speed $s_{2}^{u_i}(\rho_\ell)$ starts from $x=-1$; a $1$-shock with negative speed $s_{1}^{u_\ell}(\hat{\rho}_1)$, a stationary non-classical shock and a $2$-shock with positive speed $s_{2}^{\check{u}_1}(\rho_r)$ are generated at $x=0$.
The first two shocks interact at time $t_1>0$ in $x=x_1<0$: a $1$-shock with speed $s_{1}^{u_i}(\tilde{\rho}_1)$ and a $2$-shock with positive speed $s_{2}^{\tilde{u}_1}(\hat{\rho}_1)$ are generated.
The latter shock eventually reaches $x=0$ at time $t_2$.
By applying $\rsh[\tilde{u}_1,\check{u}_1]$, we deduce that a $1$-shock with negative speed $s_{1}^{\tilde{u}_1}(\hat{\rho}_2)$ and a stationary non-classical shock start from $x=0$ at time $t_2$; the former shock eventually interacts at time $t_3$ and position $x_3$ with the $1$-shock that was generated at time $t_1$.
As a result of such interaction, a $1$-shock with negative speed $s_{1}^{u_i}(\tilde{\rho}_2)$ and a $2$-rarefaction with positive speeds ranging in $[ \lambda_2(\tilde{u}_2) , \lambda_2(\hat{u}_2) ]$ start from $x=x_3$ at time $t_3$.
Because a rarefaction showed up, we stop the construction as soon as it reaches the valve; we denote such a time by $T_a^{q_*}$. Notice that the low value of $q_*$ lets the valve open already at $t=0^+$ with flow $q_*$ on the right; the valve keeps open and the flow is $q_*$ at least until time $T_a^{q_*}$.

\item\label{case:B}~\\
Assume $q_*\in(q_\ell,\tilde{q}(u_i,\hat{u}(0,u_\ell))]$.
We still use notation \eqref{e:ns} with the exception of $\hat{u}_1 \doteq \hat{u}(0,u_\ell)$. 
As in the previous case, at time $t=0$ a $2$-shock with positive speed $s_{2}^{u_i}(\rho_\ell)$ starts from $x=-1$; however, because of the larger value of $q_*$, only a $1$-shock with negative speed $s_{1}^{u_\ell}(\hat{\rho}_1)$ and a stationary non-classical shock are generated at $x=0$, and the valve is closed. 
At time $t_1>0$ the two classical shocks interact at $x=x_1<0$: a $1$-shock with speed $s_{1}^{u_i}(\tilde{\rho}_1)$ and a $2$-shock with positive speed $s_{2}^{\tilde{u}_1}(\hat{\rho}_1)$ are generated.
The latter shock eventually reaches $x=0$ at time $t_2$.
By considering $\rsh[\tilde{u}_1,u_r]$, we deduce that a $1$-shock with negative speed $s_{1}^{\tilde{u}_1}(\hat{\rho}_2)$, a stationary non-classical shock and a $2$-shock with positive speed $s_{2}^{\check{u}_1}(\rho_r)$ leave $x=0$ at time $t_2$. Roughly speaking, the effect of the supersonic perturbation is not much damped by the shock from $x=0$ and opens the valve.
The new $1$-shock eventually interacts at time $t_3$ and position $x_3$ with the $1$-shock appeared at time $t_1$:
a $1$-shock with negative speed $s_{1}^{u_i}(\tilde{\rho}_2)$ and a $2$-rarefaction with positive speeds ranging in $[ \lambda_2(\tilde{u}_2) , \lambda_2(\hat{u}_2) ]$ are generated.
Then we stop the construction at time $T_b^{q_*}$ when the above $2$-rarefaction reaches $x=0$.

\item\label{case:C}~\\
Assume $q_*\in(\tilde{q}(u_i,\hat{u}(0,u_\ell)),\bar{q}(\tilde{u}(u_i,\hat{u}(0,u_\ell)))]$.
In this case and in the following one we omit some details, which are as in the two previous cases. Here, differently from \ref{case:B}, from $P_2$ a $1$-rarefaction appears on the left of $x=0$. This is a consequence of the higher value of $q_*$, which lets more flow pass to the right.
Then we stop the construction at time $T_c^{q_*}$ when such $1$-rarefaction interacts with the $1$-shock created at time $t_1$.

\item\label{case:D}~\\
Assume $q_*>\bar{q}(\tilde{u}(u_i,\hat{u}(0,u_\ell)))$.
The construction is analogous to that in \ref{case:B}.
The only differences are that $\hat{u}_2 \doteq \hat{u}(0,\tilde{u}_1)$ and until time $t_3$ the valve is closed, so that no waves appear in $x>0$. Notice that the very high value of $q_*$ lets the valve closed at least until time $T_d^{q_*}$ when a $2$-rarefaction reaches $x=0$.

\end{enumerate}
\medskip

Notice now that in \ref{case:A} the points $P_1$, $P_2$, $P_3$ coalesce to the point $P_*$ as $q_* \nearrow q_\ell$, where $P_*=\left(0,s_{2}^{u_i}(\rho_\ell)^{-1}\right)$ is the intersection of the $t$-axis and the line passing through the point $(-1,0)$ with slope $s_{2}^{u_i}(\rho_\ell)^{-1}$, that is the line passing through $(-1,0)$ and $P_1$ in \figurename~\ref{f:optimization}. Hence, by comparing the above constructions, see again \figurename~\ref{f:optimization}, it is now clear that the smallest time $T$ which allows an explicit construction of the solution for any $q_*\geq0$ is precisely
\begin{equation}\label{e:T}
T = T_{\min} \doteq \min\left\{ T_a, T_b, T_c, T_d \right\} = s_{2}^{u_i}(\rho_\ell)^{-1}.
\end{equation}
where
\begin{align*}
T_a &\doteq \min\left\{ T_a^{q_*} : q_*\in[0,q_\ell] \right\},&
T_b &\doteq \inf\left\{ T_b^{q_*} : q_*\in(q_\ell,\tilde{q}(u_i,\hat{u}(0,u_\ell))] \right\},
\\
T_c &\doteq \inf\left\{ T_c^{q_*} :  q_*\in(\tilde{q}(u_i,\hat{u}(0,u_\ell)),\bar{q}(\tilde{u}(u_i,\hat{u}(0,u_\ell)))] \right\},&
T_d &\doteq \inf\left\{ T_d^{q_*} : q_*>\bar{q}(\tilde{u}(u_i,\hat{u}(0,u_\ell))) \right\}.
\end{align*}
Then it is easy to see that 
\[
\max_{q^*>0}\mathfrak{Q}(q_*,T) = \overline{Q}(u_\ell)=q_\ell,
\]
and the unique maximizer is $q_*=\overline{Q}(u_\ell) = q_\ell$. In other words, the choice of reducing the maximization process only to times prior to the first interaction involving a rarefaction leads to the same result of Subsection \ref{ss:1} for the Riemann problem, even if the construction is different.

In Subsections~\ref{sub:n1} and~\ref{sub:n2} we numerically investigate two cases: $u_i\in\mathsf{CH}_{\ell,1}$ and $u_i\in\mathsf{CH}_{\ell,1}^{\scriptscriptstyle\complement}$, respectively.
The differences between these two cases are highlighted by comparison of \figurename s~\ref{f:opt-T=2} and~\ref{f:opt-T=3}.

\subsubsection{A numerical solution of the maximization problem in the case \texorpdfstring{$v_i<v_*^{\sup}$}{}}
\label{sub:n1}

It is not easy to tackle the maximization problem \eqref{e:opt1} from an analytic point of view, even under condition \eqref{e:conditions} and for short times. We provide instead a numerical simulation.

We begin by plotting the numerical solutions of Subsection \ref{subsub:las} for specific values. 
The states in \figurename~\ref{f:optimization00} and the exact solutions constructed in \figurename~\ref{f:optimization} are represented below by taking
\begin{align}\label{e:data02-A}
&a=1,&
&\rho_i=3,&
&q_i=4,&
&\rho_r=8,&
&q_r=0,
\end{align}
and the following values of $q_*$ for the corresponding cases
\begin{align}\label{e:qs}
&q_*^a=0.2,&
&q_*^b=2.2,&
&q_*^c=3.5,&
&q_*^d=4.5.
\end{align}
Notice that $v_i = q_i/\rho_i=4/3\approx1.33>1 = a$, and then $v_i$ is supersonic; however $v_i<1.63=v_*^{\sup}$. Notice moreover that the above construction and the choice in \eqref{e:data02-A} lead to 
\begin{align}\label{e:data02-B}
\rho_\ell &\approx 2.15,&
v_\ell&=1,&
q_\ell&\approx 2.15,
\\\label{e:data02-C}
\hat{\rho}(0,u_\ell) &\approx 5.64,&
v_i&=4/3,&
\tilde{q}\left(u_i,\hat{u}(0,u_\ell)\right) &\approx 2.62,
\\\label{e:data02-D}
\tilde{\rho}\left(u_i,\hat{u}(0,u_\ell)\right) &\approx 7.85,&
v_r&=0,&
\bar{q}\left( \tilde{u}\left(u_i,\hat{u}(0,u_\ell)\right) \right) &\approx 4.03,
\end{align}
while by \eqref{e:qs} and \eqref{e:T} we have
\begin{align}\label{e:data02-E}
&T_a^{q_*^a}\approx 1.37,&
&T_b^{q_*^b}\approx 1.56,&
&T_c^{q_*^c}\approx 1.26,&
&T_d^{q_*^d}\approx 1.44,&
&T_{\min} \approx 0.46.
\end{align}
In particular, the conditions listed in \eqref{e:conditions} are satisfied, see \figurename~\ref{f:optimization00}, and $q_*^a < q_\ell < q_*^b < \tilde{q}(u_i,\hat{u}(0,u_\ell)) < q_*^c < \bar{q}( \tilde{u}(u_i,\hat{u}(0,u_\ell)) ) < q_*^d$.
In \figurename~\ref{f:profiles} we show the outputs of our numerical simulations, which highlight a very good match with the exact solution and confirm the validity of the numerical scheme. Notice, both in \ref{case:A} and \ref{case:D}, the persistence of a negative left flow from the valve, as it was indeed forecast by the top pictures in \figurename~\ref{f:optimization}.
\begin{figure}[!htb]\centering
\begin{subfigure}[t]{0.235\textwidth}
\centering
\resizebox{\linewidth}{!}{
\begin{tikzpicture}[every node/.style={anchor=south west,inner sep=0pt},x=1mm, y=1mm]
\node at (0,0) {\includegraphics[width=60mm,trim=72mm 193mm 72.5mm 45.5mm, clip]{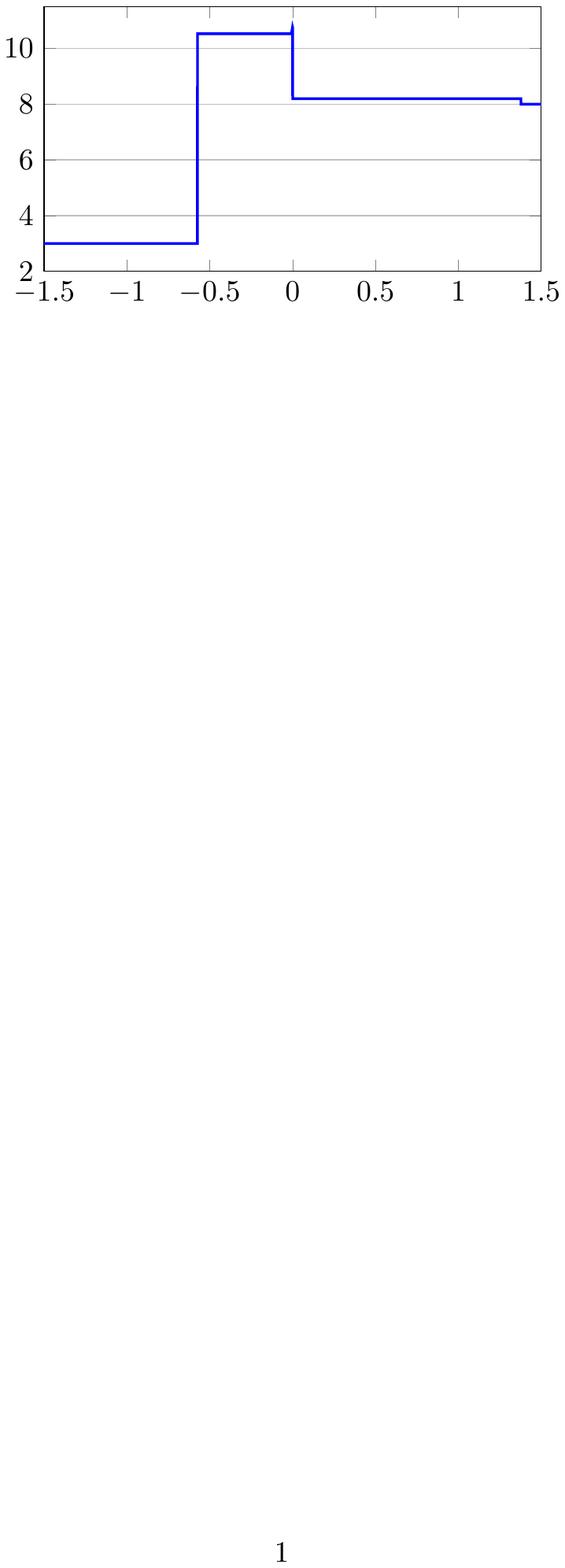}};
\node[below] at (30,0) {\strut $x\mapsto \rho(T_a^{q_*},x)$};
\end{tikzpicture}}
\\[10pt]
\resizebox{\linewidth}{!}{
\begin{tikzpicture}[every node/.style={anchor=south west,inner sep=0pt},x=1mm, y=1mm]
\node at (0,0) {\includegraphics[width=60mm,trim=72mm 193mm 72.5mm 45.5mm, clip]{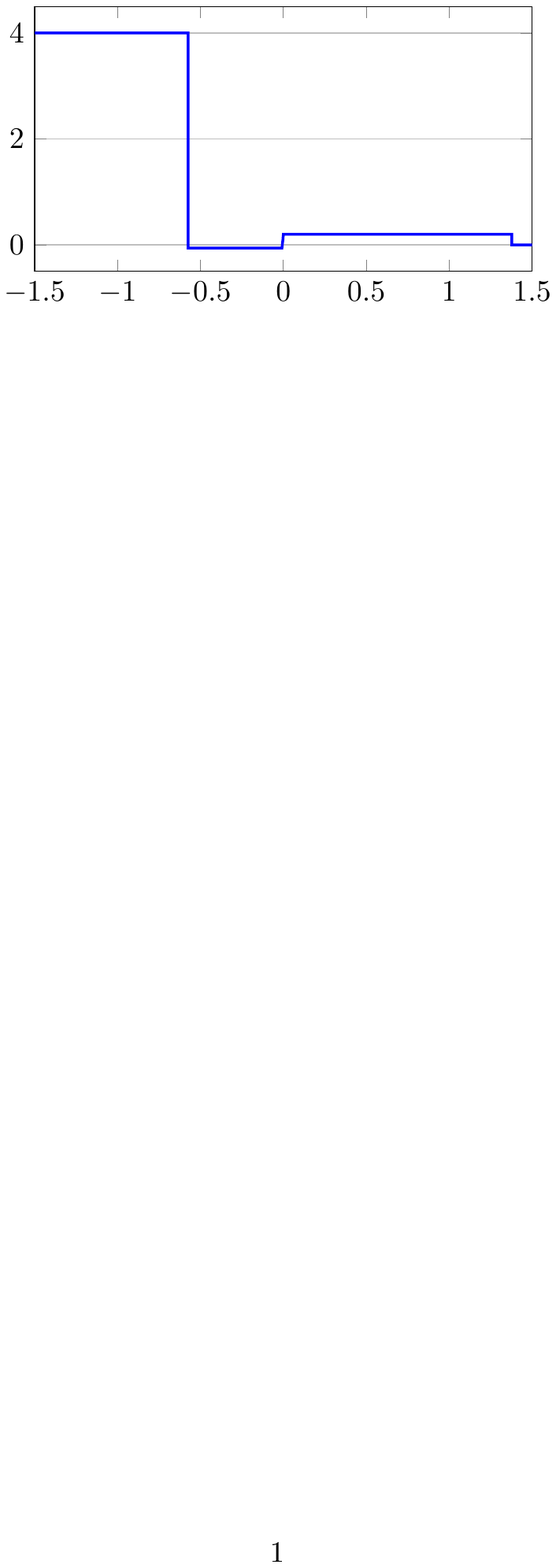}};
\node[below] at (30,0) {\strut $x\mapsto q(T_a^{q_*},x)$};
\end{tikzpicture}}
\caption*{\ref{case:A}, $\boldsymbol{q_*=0.2}$}
\end{subfigure}
\begin{subfigure}[t]{0.235\textwidth}
\centering
\resizebox{\linewidth}{!}{
\begin{tikzpicture}[every node/.style={anchor=south west,inner sep=0pt},x=1mm, y=1mm]
\node at (0,0) {\includegraphics[width=60mm,trim=72mm 193mm 72.5mm 45.5mm, clip]{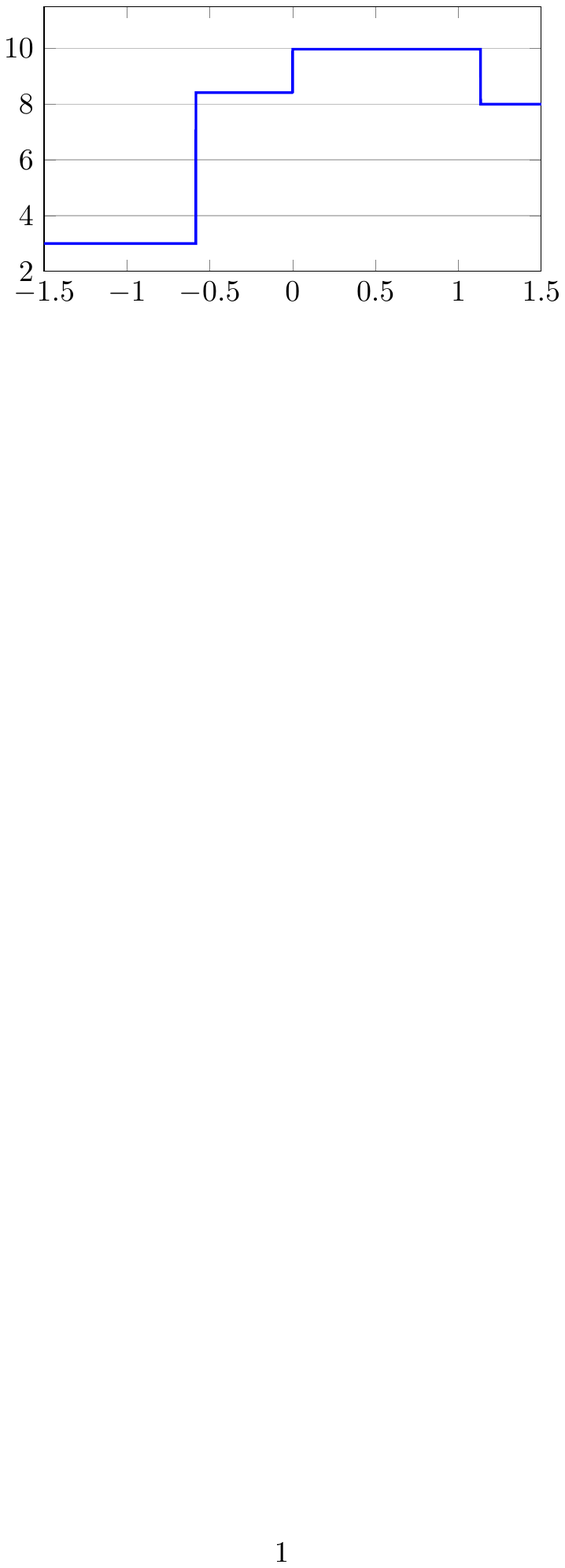}};
\node[below] at (30,0) {\strut $x\mapsto \rho(T_b^{q_*},x)$};
\end{tikzpicture}}
\\[10pt]
\resizebox{\linewidth}{!}{
\begin{tikzpicture}[every node/.style={anchor=south west,inner sep=0pt},x=1mm, y=1mm]
\node at (0,0) {\includegraphics[width=60mm,trim=72mm 193mm 72.5mm 45.5mm, clip]{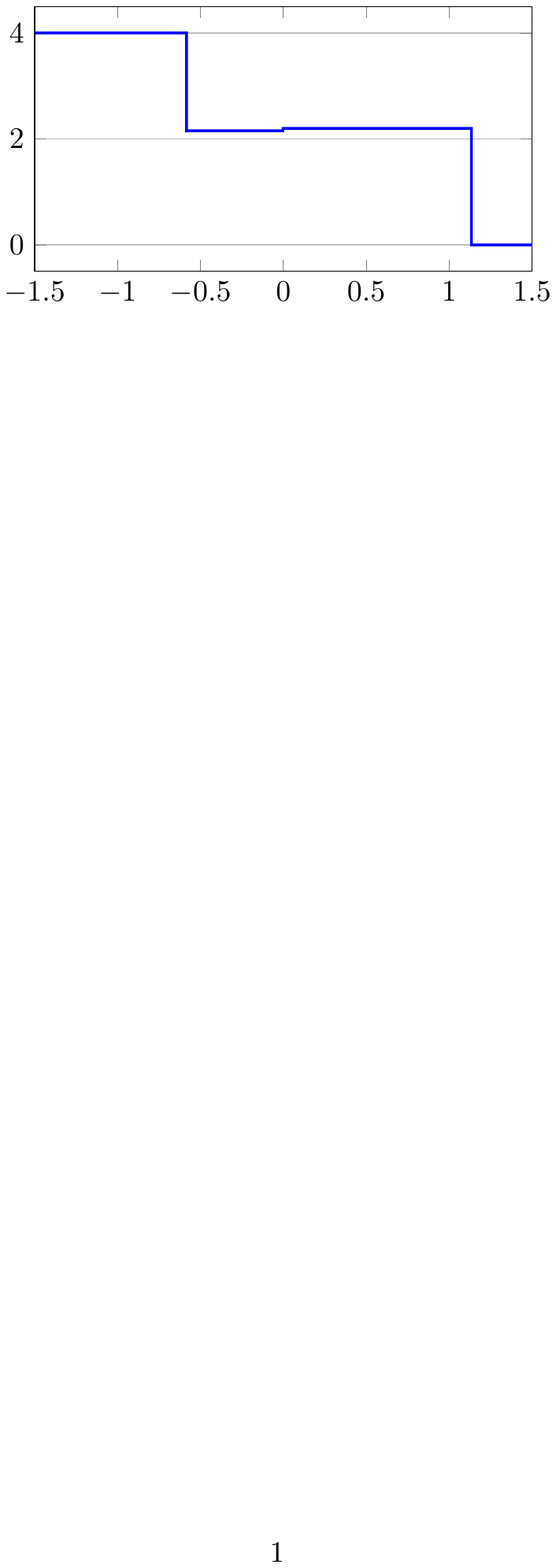}};
\node[below] at (30,0) {\strut $x\mapsto q(T_b^{q_*},x)$};
\end{tikzpicture}}
\caption*{\ref{case:B}, $\boldsymbol{q_*=2.2}$}
\end{subfigure}
\begin{subfigure}[t]{0.235\textwidth}
\centering
\resizebox{\linewidth}{!}{
\begin{tikzpicture}[every node/.style={anchor=south west,inner sep=0pt},x=1mm, y=1mm]
\node at (0,0) {\includegraphics[width=60mm,trim=72mm 193mm 72.5mm 45.5mm, clip]{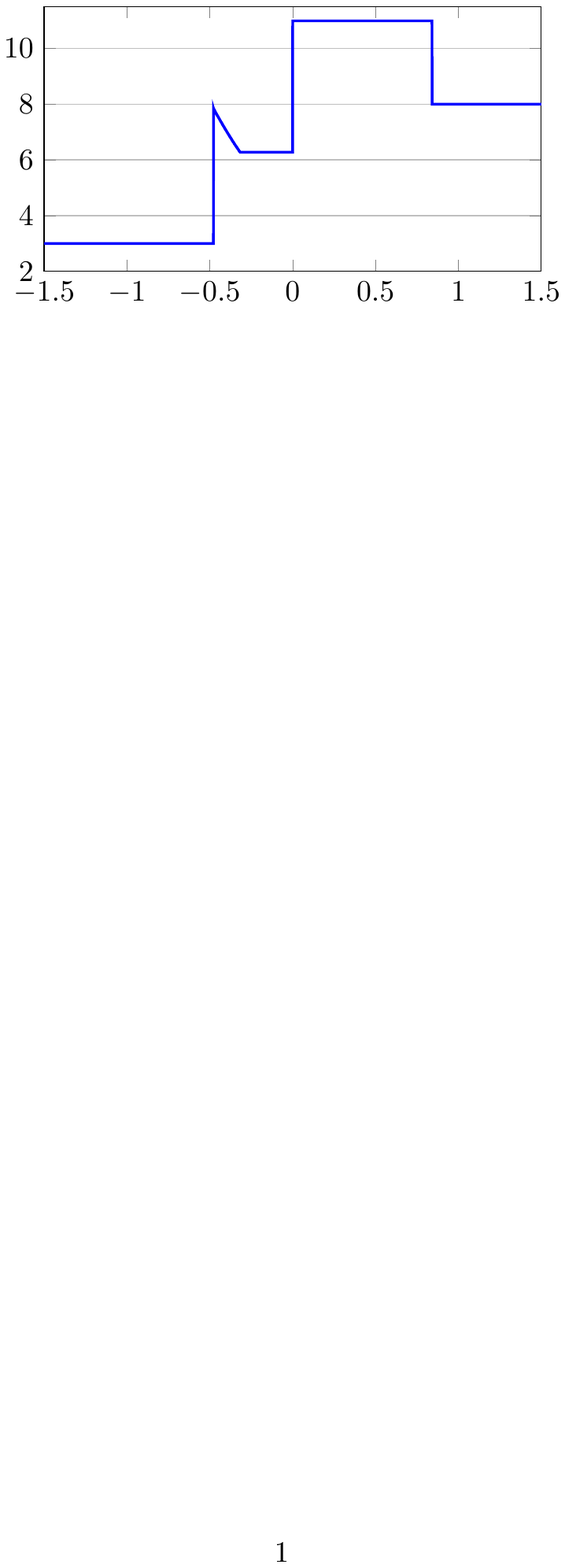}};
\node[below] at (30,0) {\strut $x\mapsto \rho(T_c^{q_*},x)$};
\end{tikzpicture}}
\\[10pt]
\resizebox{\linewidth}{!}{
\begin{tikzpicture}[every node/.style={anchor=south west,inner sep=0pt},x=1mm, y=1mm]
\node at (0,0) {\includegraphics[width=60mm,trim=72mm 193mm 72.5mm 45.5mm, clip]{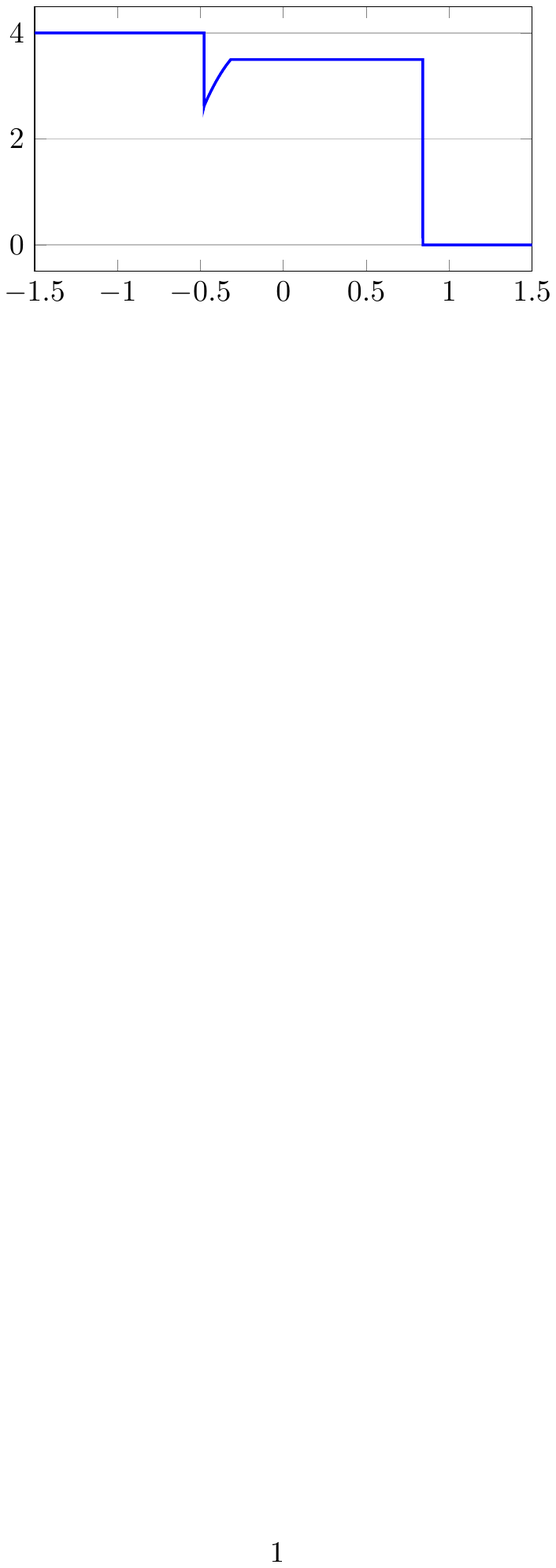}};
\node[below] at (30,0) {\strut $x\mapsto q(T_c^{q_*},x)$};
\end{tikzpicture}}
\caption*{\ref{case:C}, $\boldsymbol{q_*=3.5}$}
\end{subfigure}
\begin{subfigure}[t]{0.235\textwidth}
\centering
\resizebox{\linewidth}{!}{
\begin{tikzpicture}[every node/.style={anchor=south west,inner sep=0pt},x=1mm, y=1mm]
\node at (0,0) {\includegraphics[width=60mm,trim=72mm 193mm 72.5mm 45.5mm, clip]{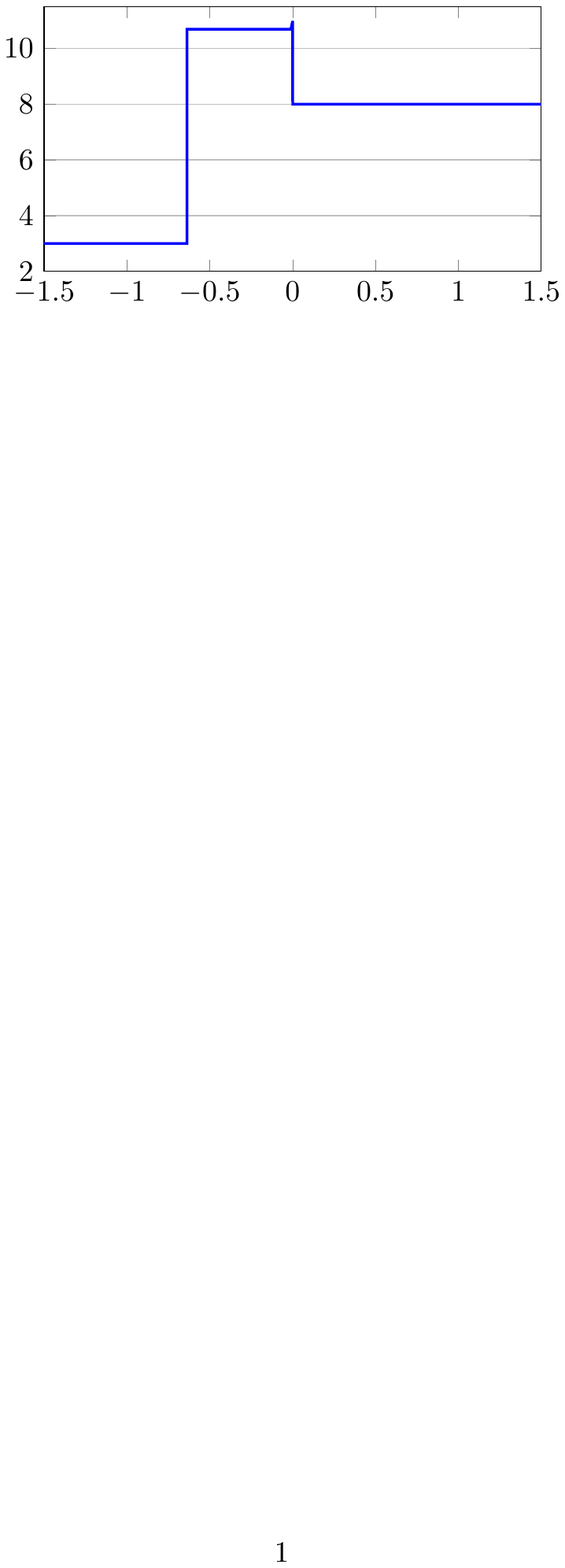}};
\node[below] at (30,0) {\strut $x\mapsto \rho(T_d^{q_*},x)$};
\end{tikzpicture}}
\\[10pt]
\resizebox{\linewidth}{!}{
\begin{tikzpicture}[every node/.style={anchor=south west,inner sep=0pt},x=1mm, y=1mm]
\node at (0,0) {\includegraphics[width=60mm,trim=72mm 193mm 72.5mm 45.5mm, clip]{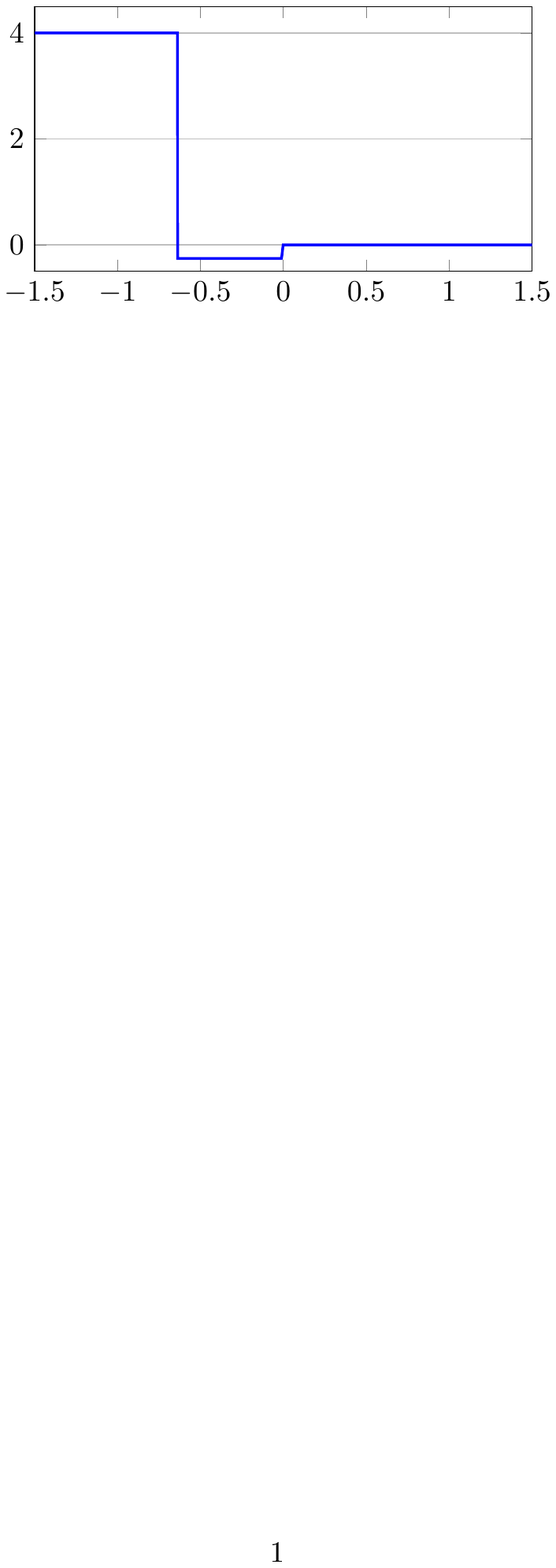}};
\node[below] at (30,0) {\strut $x\mapsto q(T_d^{q_*},x)$};
\end{tikzpicture}}
\caption*{\ref{case:D}, $\boldsymbol{q_*=4.5}$}
\end{subfigure}
\caption{Numerical simulations of the cases in Subsection~\ref{subsub:las} and corresponding to the values listed in \eqref{e:data02-A}, \eqref{e:data02-B}. 
The spikes in the $\rho$-profiles for the first and last case appear also in the exact solutions and correspond to a $2$-rarefaction followed by a non-classical stationary shock at $x=0$.}
\label{f:profiles}
\end{figure}

In \figurename~\ref{f:opt-T=2}, we show the numerical result obtained with the same values as in \eqref{e:data02-A}, \eqref{e:data02-B} and $T=2>T_{\min}$, see \eqref{e:data02-E}.
Recall that even at time $T=2$ an exact expression of the solution is not easily available.
\begin{figure}[!htb]\centering
\begin{subfigure}[t]{43mm}
\centering
\begin{tikzpicture}[every node/.style={anchor=south west,inner sep=0pt},x=1mm, y=1mm]
\node at (0,0) {\includegraphics[width=43mm,trim=73.5mm 195.5mm 74mm 45.5mm, clip]{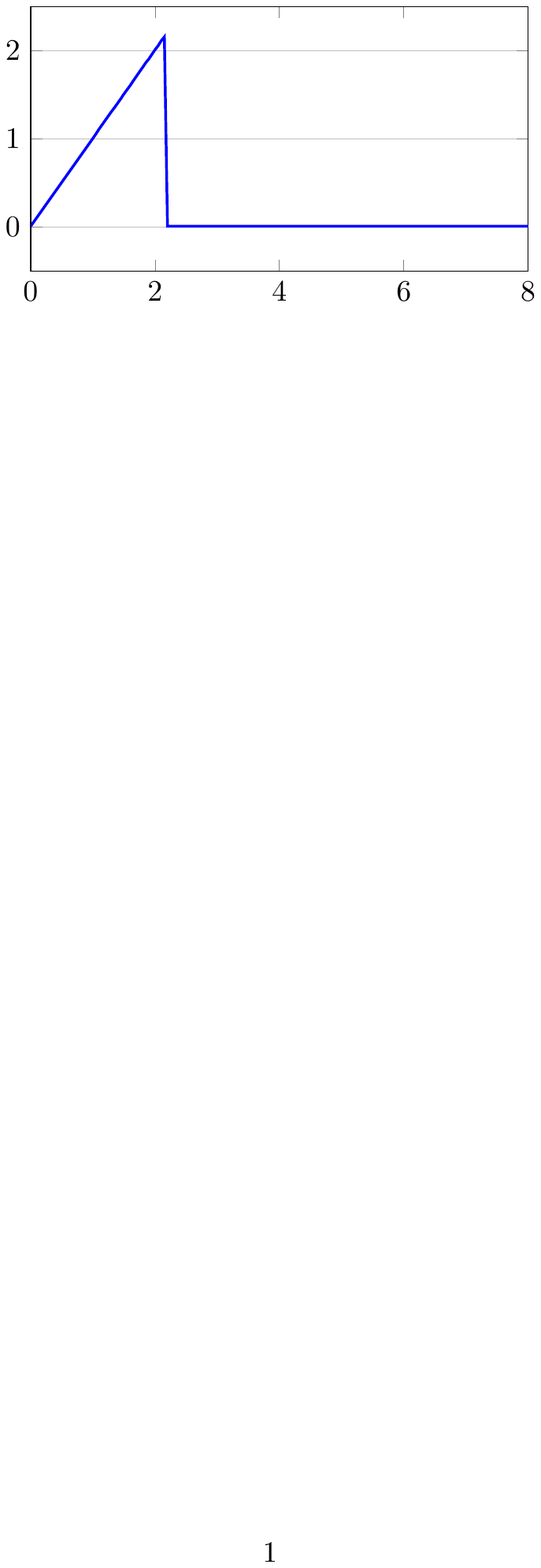}};
\end{tikzpicture}
\caption*{$q_*\mapsto\mathfrak{Q}(q_*,0.5)$}
\end{subfigure}
\hspace{5mm}
\begin{subfigure}[t]{43mm}
\centering
\begin{tikzpicture}[every node/.style={anchor=south west,inner sep=0pt},x=1mm, y=1mm]
\node at (0,0) {\includegraphics[width=43mm,trim=73.5mm 195.5mm 74mm 45.5mm, clip]{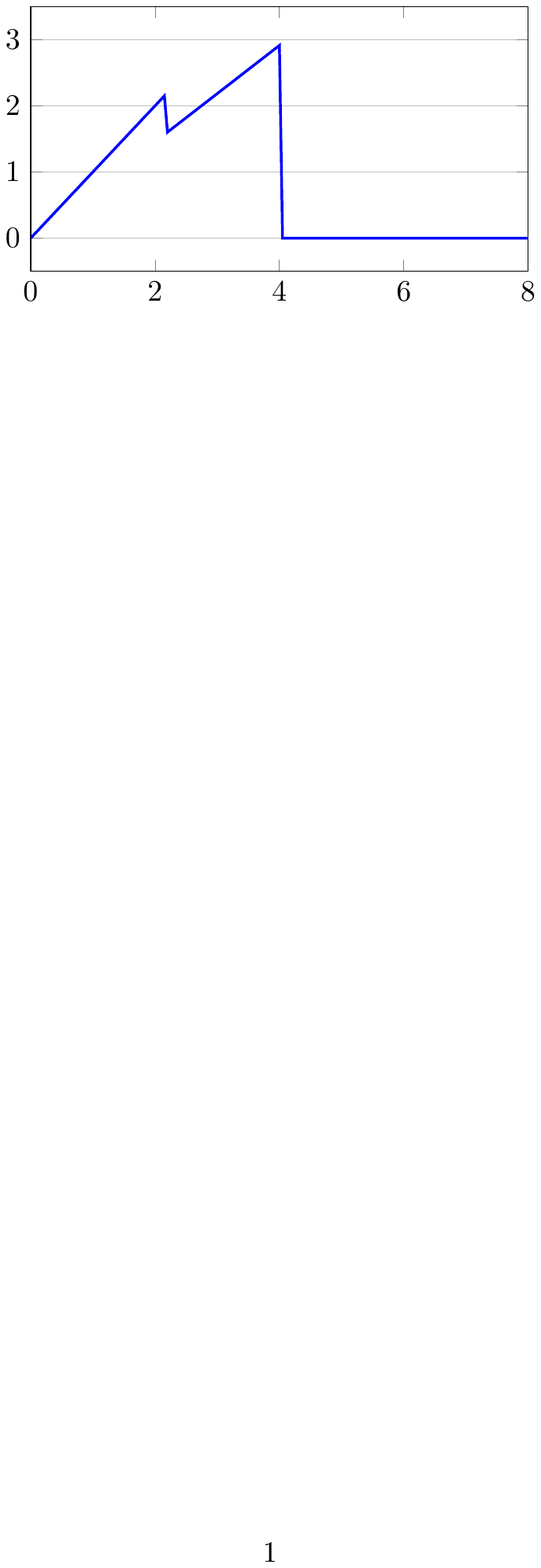}};
\end{tikzpicture}
\caption*{$q_*\mapsto\mathfrak{Q}(q_*,2)$}
\end{subfigure}
\hspace{5mm}
\begin{subfigure}[t]{43mm}
\centering
\begin{tikzpicture}[every node/.style={anchor=south west,inner sep=0pt},x=1mm, y=1mm]
\node at (0,0) {\includegraphics[width=43mm,trim=73.5mm 195.5mm 74mm 45.5mm, clip]{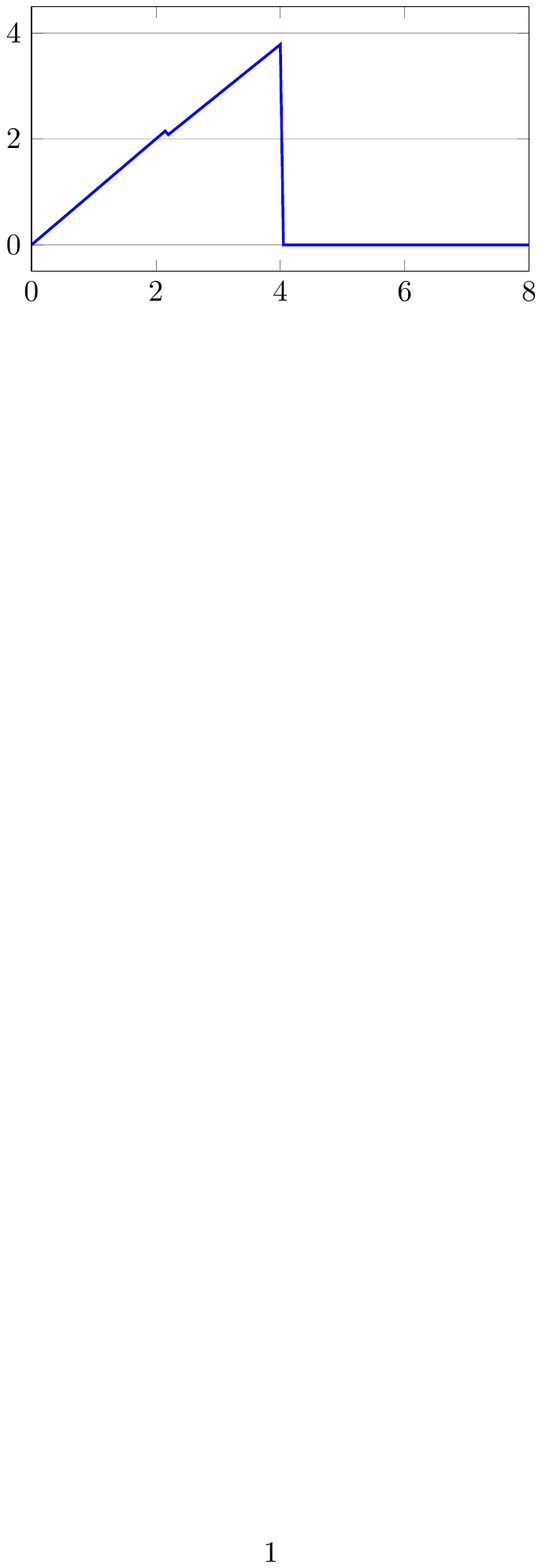}};
\end{tikzpicture}
\caption*{$q_*\mapsto\mathfrak{Q}(q_*,10)$}
\end{subfigure}
\caption{Numerical simulations corresponding to the values in \eqref{e:data02-A}, \eqref{e:data02-B} and for different values of $T$.}
\label{f:opt-T=2}
\end{figure}
We notice, in \figurename~\ref{f:opt-T=2}, that the function $q_*\mapsto \mathfrak{Q}(q_*,T)$ has up to two discontinuities, which can be interpreted as follows:
\begin{itemize}
\item if $q_*\in[0,q_\ell]$, then for any $t\geq0$ we have $u^{q_*}(t,0^-) \in \Omega \setminus \mathsf{C}_\ell^{q_*}$ and therefore $q^{q_*}(t,0) = q_*$;
\item if $q_*\in(q_\ell,\bar{q}( \tilde{u}(u_i,\hat{u}(0,u_\ell)) )]$, then for any $t\in[0,t_2)$ we have $u^{q_*}(t,0^-) \in \mathsf{C}_\ell^{q_*} \cap \mathsf{CH}_\ell^{q_*}$ and therefore $q^{q_*}(t,0) = 0$, whereas for any $t\geq t_2$ we have $u^{q_*}(t,0^-) \in \Omega \setminus \mathsf{C}_\ell^{q_*}$ and therefore $q^{q_*}(t,0) = q_*$;
\item if $q_*>\bar{q}( \tilde{u}(u_i,\hat{u}(0,u_\ell)) )$, then for any $t\geq0$ we have $u^{q_*}(t,0^-) \in \mathsf{C}_\ell^{q_*} \cap \mathsf{CH}_\ell^{q_*}$ and therefore $q^{q_*}(t,0) = 0$.
\end{itemize}
As a further check of the simulations, we plotted in \figurename~\ref{f:opt-phapla} the numerical traces $u_\Delta^{q_*}(t,0^-)$, $t\in(0,T]$, for the four different values of $q_*$ listed in \eqref{e:qs}.
\begin{figure}[!htb]\centering
\begin{subfigure}[t]{55mm}
\begin{tikzpicture}[every node/.style={anchor=south west,inner sep=0pt},x=1mm, y=1mm]
\node at (0,0) {\includegraphics[width=55mm,trim=64mm 158mm 63mm 47mm, clip]{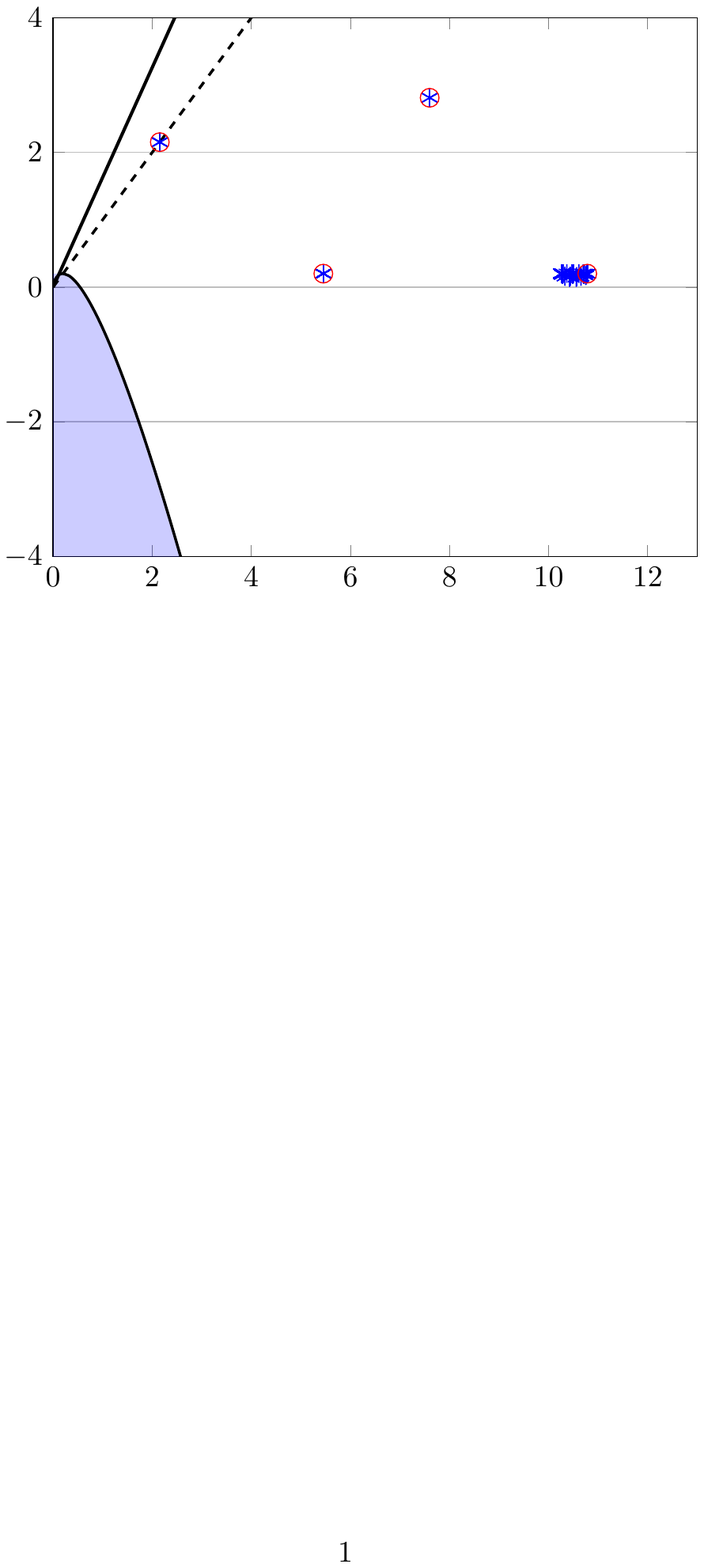}};
\node at (14,34) {\strut$u_\ell$};
\node at (27,23.5) {\strut$\hat{u}_1$};
\node at (35.5,37) {\strut$\tilde{u}_1$};
\node at (48,23.5) {\strut$\hat{u}_2$};
\end{tikzpicture}
\caption*{\ref{case:A}, $\boldsymbol{q_* = 0.2}$}
\end{subfigure}
\hspace{5mm}
\begin{subfigure}[t]{55mm}
\centering
\begin{tikzpicture}[every node/.style={anchor=south west,inner sep=0pt},x=1mm, y=1mm]
\node at (0,0) {\includegraphics[width=55mm,trim=64mm 158mm 63mm 47mm, clip]{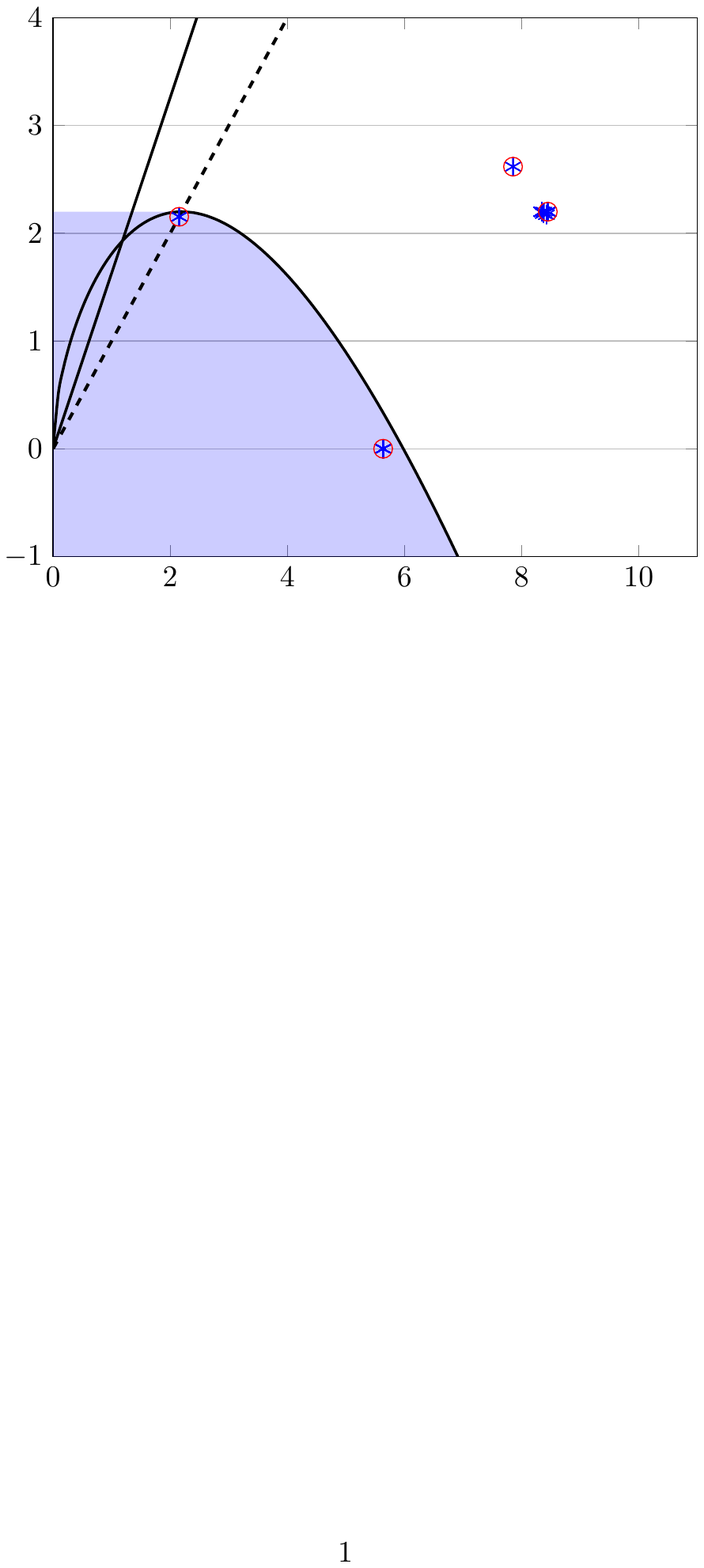}};
\node at (14.5,25.5) {\strut$u_\ell$};
\node at (25,10.5) {\strut$\hat{u}_1$};
\node at (35.5,32) {\strut$\tilde{u}_1$};
\node at (45,28) {\strut$\hat{u}_2$};
\end{tikzpicture}
\caption*{\ref{case:B}, $\boldsymbol{q_* = 2.2}$}
\end{subfigure}
\\[5pt]
\begin{subfigure}[t]{55mm}
\centering
\begin{tikzpicture}[every node/.style={anchor=south west,inner sep=0pt},x=1mm, y=1mm]
\node at (0,0) {\includegraphics[width=55mm,trim=64mm 158mm 63mm 47mm, clip]{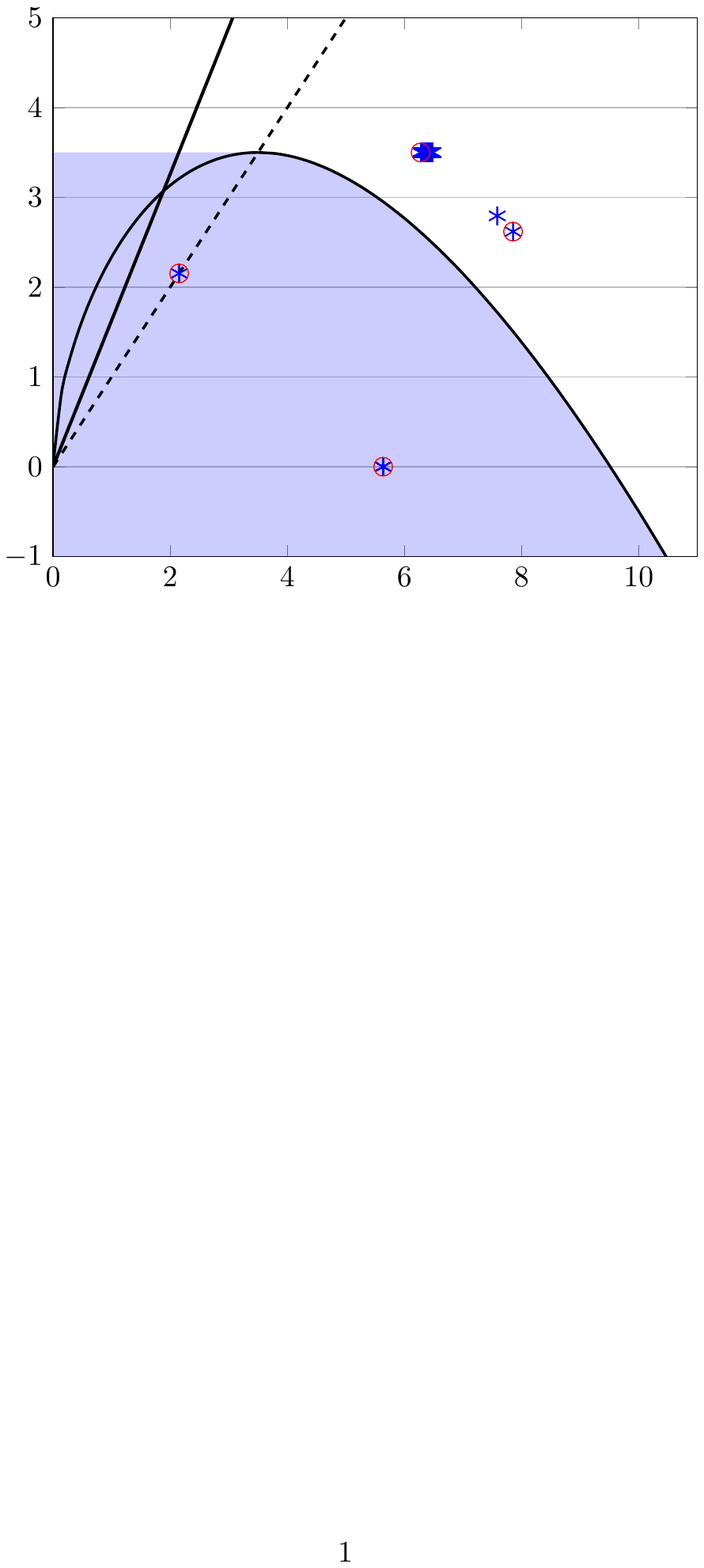}};
\node at (15,23.5) {\strut$u_\ell$};
\node at (25.5,9) {\strut$\hat{u}_1$};
\node at (28.5,32) {\strut$\tilde{u}_1$};
\node at (41.5,26) {\strut$\hat{u}_2$};
\end{tikzpicture}
\caption*{\ref{case:C}, $\boldsymbol{q_* = 3.5}$}
\end{subfigure}
\hspace{5mm}
\begin{subfigure}[t]{55mm}
\centering
\begin{tikzpicture}[every node/.style={anchor=south west,inner sep=0pt},x=1mm, y=1mm]
\node at (0,0) {\includegraphics[width=55mm,trim=64mm 158mm 63mm 47mm, clip]{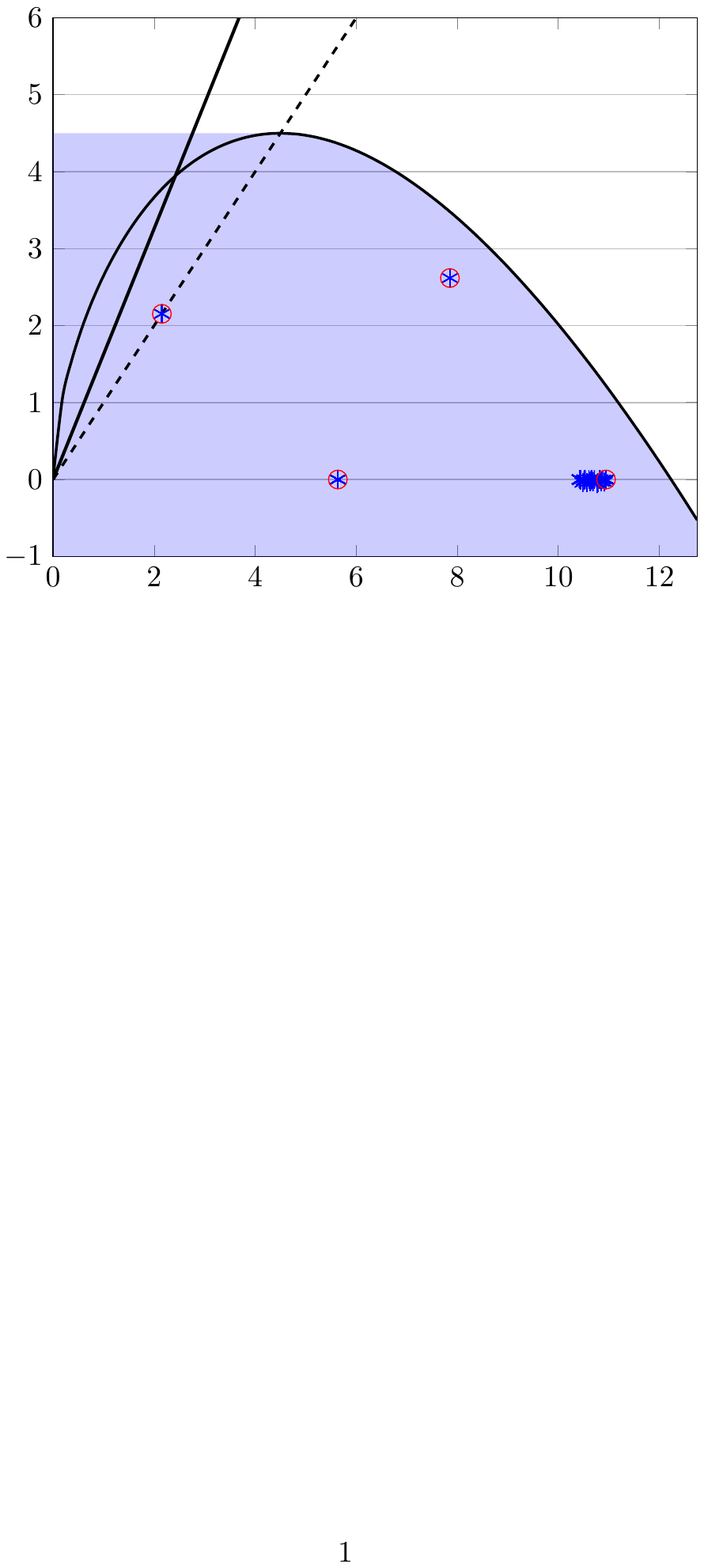}};
\node at (14,20) {\strut$u_\ell$};
\node at (25,9) {\strut$\hat{u}_1$};
\node at (31,22) {\strut$\tilde{u}_1$};
\node at (46,9) {\strut$\hat{u}_2$};
\end{tikzpicture}
\caption*{\ref{case:D}, $\boldsymbol{q_* = 4.5}$}
\end{subfigure}
\caption{Representation in the $(\rho,q)$-phase plane of the left traces at $x=0$ of the solutions constructed in Subsections~\ref{subsub:las} and~\ref{sub:n1}, and corresponding to the values listed in \eqref{e:data02-A}, \eqref{e:data02-B}.
The stars correspond to the values obtained numerically for $t\in(0,2]$, while the circles correspond to the exact values for $t$ respectively in $[0,T_a^{q_*}]$, $[0,T_b^{q_*}]$, $[0,T_c^{q_*}]$ and $[0,T_d^{q_*}]$.
The curves correspond to $\mathsf{BL}_1^{u_*^a}$, the shaded regions to $\mathsf{C}_\ell^{q_*}$, the solid lines to $q=v_*^{\sup}\,\rho$ and the dashed lines to $q=a\,\rho$, see \eqref{e:usa}, \eqref{e:CL} and \eqref{e:DevinTownsend}.
Notice that the traces $u_\ell$ and $\tilde{u}_1$ are attained only at times $t=0$ and $t=t_2$, respectively.}
\label{f:opt-phapla}
\end{figure}
As a consequence, for $T\in[0,2]$ the solution of the maximization problem \eqref{e:opt1} is
\begin{equation}\label{e:rsl}
\mathfrak{Q}(q_*,T) =
\left\{\begin{array}{@{}l@{\quad\hbox{for }}l@{}}
q_* &q_*\in[0,q_\ell] \hbox{ and } T>0,
\\
0&q_*\in(q_\ell,\bar{q}( \tilde{u}(u_i,\hat{u}(0,u_\ell)) )] \hbox{ and } T\in[0,t_2],
\\
\frac{T-t_2}{T} \, q_* &q_*\in(q_\ell,\bar{q}( \tilde{u}(u_i,\hat{u}(0,u_\ell)) )] \hbox{ and } T>t_2,
\\
0&q_* > \bar{q}( \tilde{u}(u_i,\hat{u}(0,u_\ell)) ) \hbox{ and } T>0,
\end{array}\right.
\end{equation}
where $t_2\approx 0.54$ corresponds to $P_2$ in \figurename~\ref{f:optimization}, \ref{case:B} and \ref{case:C}.
We observe that \figurename~\ref{f:opt-T=2} highlights a good match with the expression in \eqref{e:rsl} of $\mathfrak{Q}(q_*,2)$, see \eqref{e:data02-B}, \eqref{e:data02-C} and \eqref{e:data02-D}.

\subsubsection{A numerical solution of the maximization problem in the case \texorpdfstring{$v_i>v_*^{\sup}$}{}}
\label{sub:n2}

In this subsection we consider the case when $v_i > v_*^{\sup} \approx 1.63 \cdot a$. The analytic construction is similar to the one performed in Subsection \ref{subsub:las} (which mainly aimed at checking the validity of the numerical scheme), so we do not repeat it. We guess that $u_i$ will reach the valve for $T$ sufficiently large. In this case \figurename~\ref{f:opt-T=2} will be different/richer, see \figurename~\ref{f:opt-T=3}, and we can comment it and point out the new features.
\begin{figure}[!htb]\centering
\begin{subfigure}[t]{43mm}
\centering
\begin{tikzpicture}[every node/.style={anchor=south west,inner sep=0pt},x=1mm, y=1mm]
\node at (0,0) {\includegraphics[width=43mm,trim=74.5mm 195.5mm 73.5mm 45.5mm, clip]{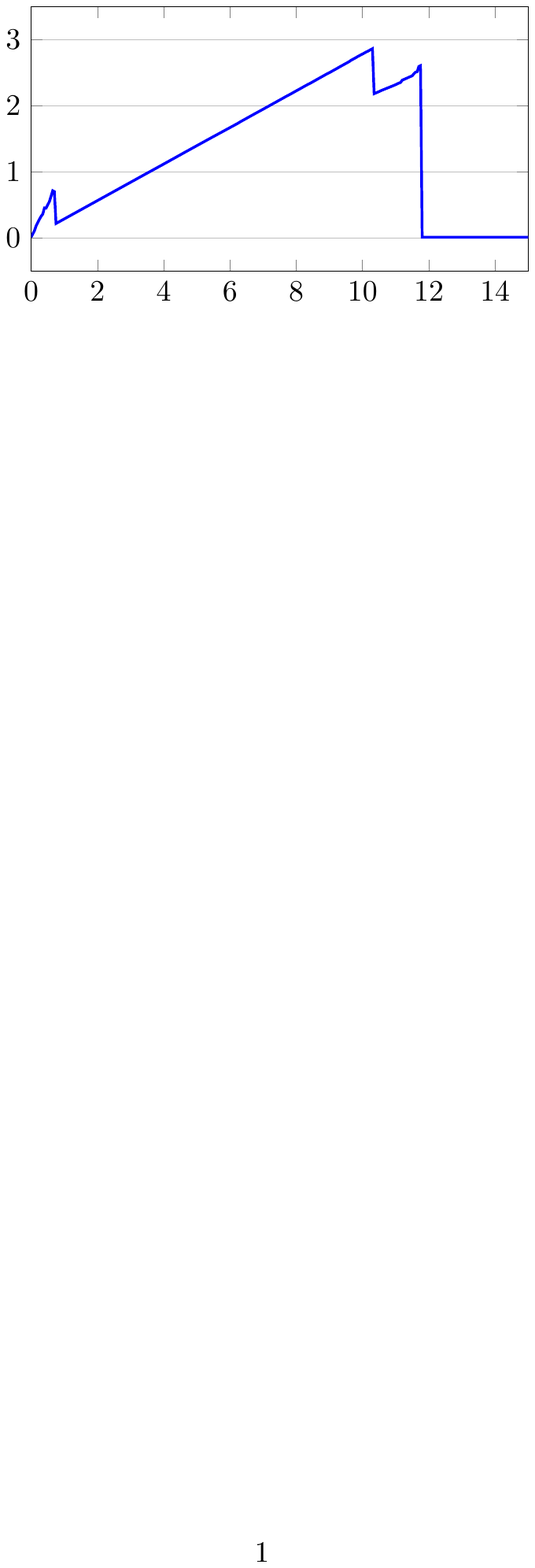}};
\end{tikzpicture}
\caption*{$q_*\mapsto\mathfrak{Q}(q_*,0.5)$}
\end{subfigure}
\hspace{5mm}
\begin{subfigure}[t]{43mm}
\centering
\begin{tikzpicture}[every node/.style={anchor=south west,inner sep=0pt},x=1mm, y=1mm]
\node at (0,0) {\includegraphics[width=43mm,trim=74.5mm 195.5mm 73.5mm 45.5mm, clip]{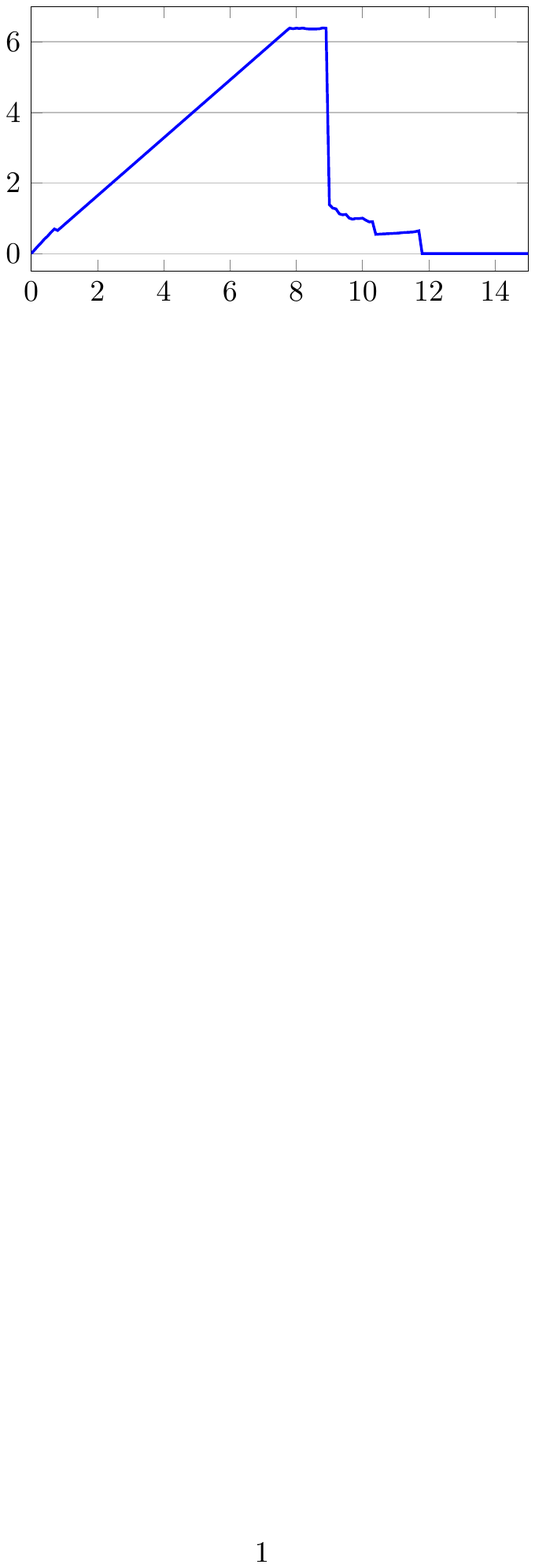}};
\end{tikzpicture}
\caption*{$q_*\mapsto\mathfrak{Q}(q_*,2)$}
\end{subfigure}
\hspace{5mm}
\begin{subfigure}[t]{43mm}
\centering
\begin{tikzpicture}[every node/.style={anchor=south west,inner sep=0pt},x=1mm, y=1mm]
\node at (0,0) {\includegraphics[width=43mm,trim=74.5mm 195.5mm 73.5mm 45.5mm, clip]{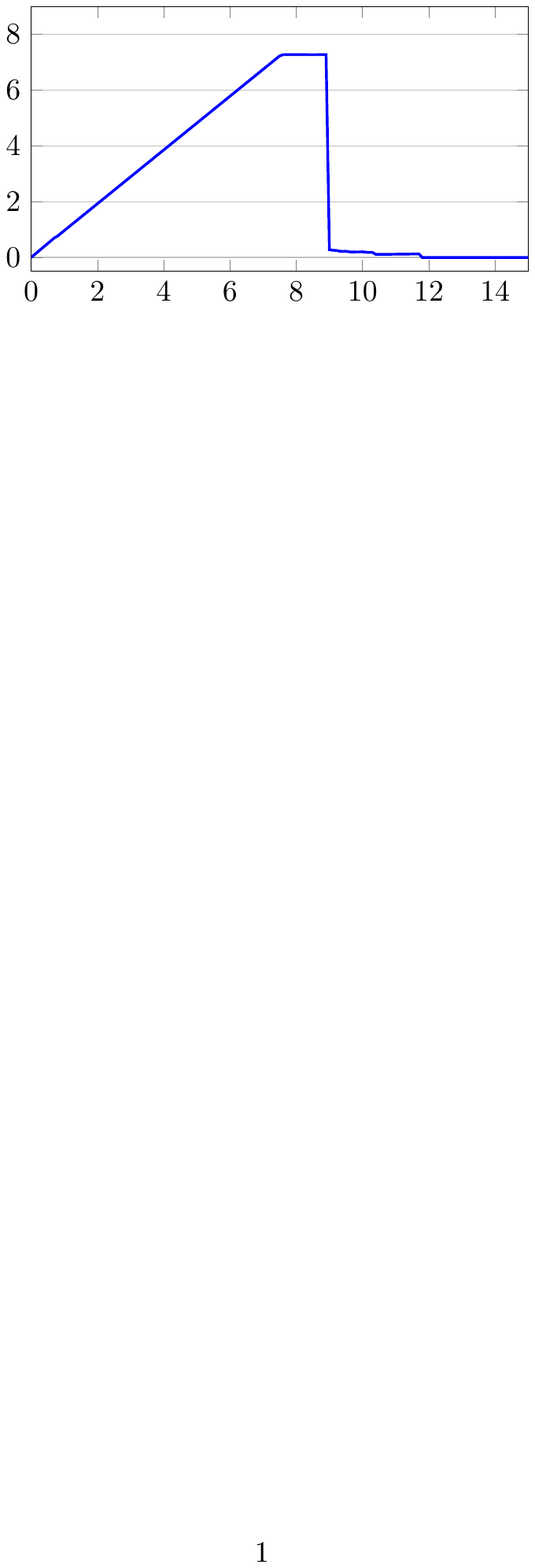}};
\end{tikzpicture}
\caption*{$q_*\mapsto\mathfrak{Q}(q_*,10)$}
\end{subfigure}
\caption{Numerical simulations corresponding to \eqref{e:data_ss-A}, \eqref{e:data_ss-B} and for different values of $T$.}
\label{f:opt-T=3}
\end{figure}

In order to have the same wave structure outlined at the beginning of Section \ref{ss:2} (namely, a supersonic perturbation $u_i$ which is separated from the state $u_\ell$ by a $2$-shock wave moving toward the valve) we now replace \eqref{e:data02-A} with
\begin{align}\label{e:data_ss-A}
&a=1,&
&\rho_i=3,&
&q_i=7.5,&
&\rho_r=8,&
&q_r=0,
\end{align}
which lead to 
\begin{align}\label{e:data_ss-B}
\rho_\ell &\approx 0.75,&
v_\ell&=1,&
q_\ell&\approx 0.75,&
\rho_r&\approx 8
\end{align}
Notice that $v_i = 2.5\cdot a > v_*^{\sup} \approx 1.63 \cdot a$.
The last picture in \figurename~\ref{f:opt-T=3} resembles the last picture in \figurename~\ref{f:numCASE1}.
This is probably due to the fact that the solution corresponding to the initial datum \eqref{e:ic3} with $u_i$, $u_\ell$ and $u_r$ given by \eqref{e:data_ss-A}, \eqref{e:data_ss-B} converges for $t\to\infty$ to the solution of the Riemann problem corresponding to the states $u_i$ and $u_r$.

\subsection{The case \texorpdfstring{$\rsp[u_i,u_\ell]$ is a $2$-rarefaction}{}}
\label{ss:3}
In this final subsection we pursue the analysis of a perturbation interacting with the valve from the left, that we began in Subsection \ref{ss:2} with the case of a $2$-shock wave, by considering the case of a $2$-rarefaction wave. In this case, as we mentioned above, analytically computations are too heavy to be provided, and therefore we focus on numerical simulations. More precisely we consider the data
\begin{align}
\label{e:data-opti-rare}
&a = 1,&
&\rho_i = 3,&
&q_i = 0,&
&\rho_\ell \approx 8.15,&
&q_\ell \approx 8.15,&
&\rho_r = 8,&
&q_r = 0,
\end{align}
Notice that the values of $a$, $\rho_i$, $\rho_r$ and $q_r$ are as in \eqref{e:data02-A} and \eqref{e:data_ss-A}. The following \figurename~\ref{f:opt-rare} shows our numerical simulations, and has to be compared with Figures \ref{f:opt-T=2} and \ref{f:opt-T=3}. We notice a similar behavior of the function $\mathfrak{Q}(q_*,T)$, which is interpreted as in the previous case, see \eqref{e:rsl}.  

\begin{figure}[!htb]\centering
\begin{subfigure}[t]{43mm}
\centering
\begin{tikzpicture}[every node/.style={anchor=south west,inner sep=0pt},x=1mm, y=1mm]
\node at (0,0) {\includegraphics[width=43mm,trim=73mm 195.5mm 73.5mm 45.5mm, clip]{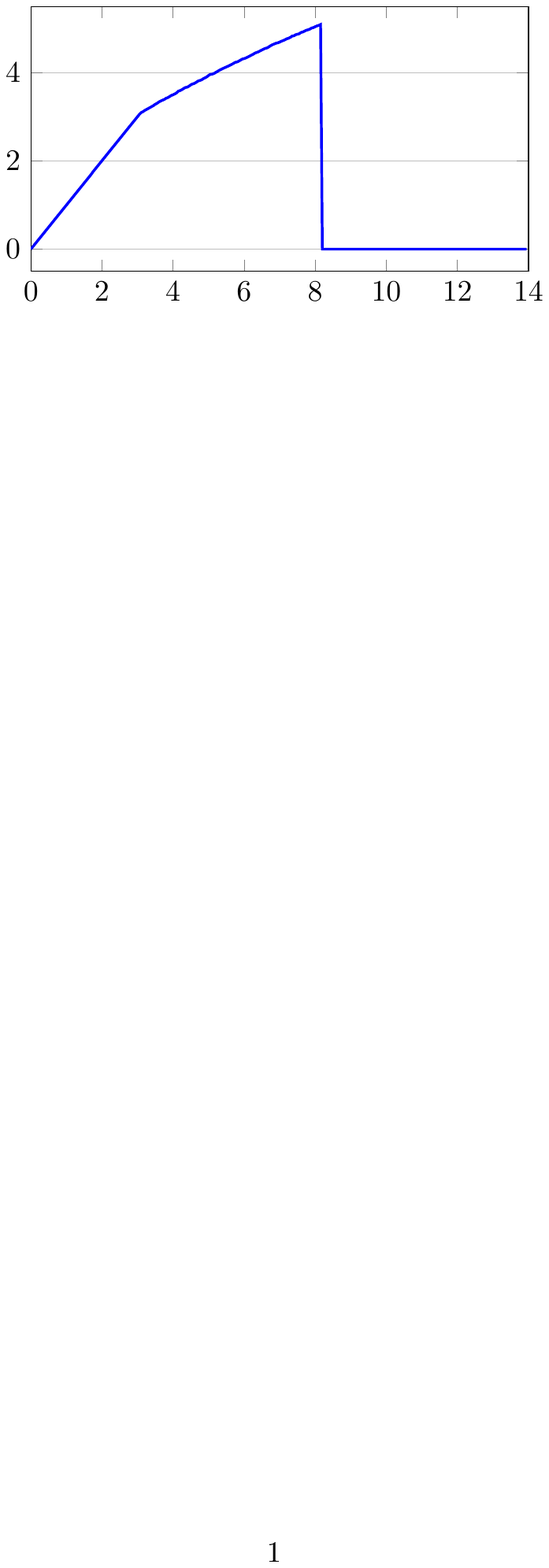}};
\end{tikzpicture}
\caption*{$q_*\mapsto\mathfrak{Q}(q_*,0.8)$}
\end{subfigure}
\hspace{5mm}
\begin{subfigure}[t]{43mm}
\centering
\begin{tikzpicture}[every node/.style={anchor=south west,inner sep=0pt},x=1mm, y=1mm]
\node at (0,0) {\includegraphics[width=43mm,trim=73mm 195.5mm 73.5mm 45.5mm, clip]{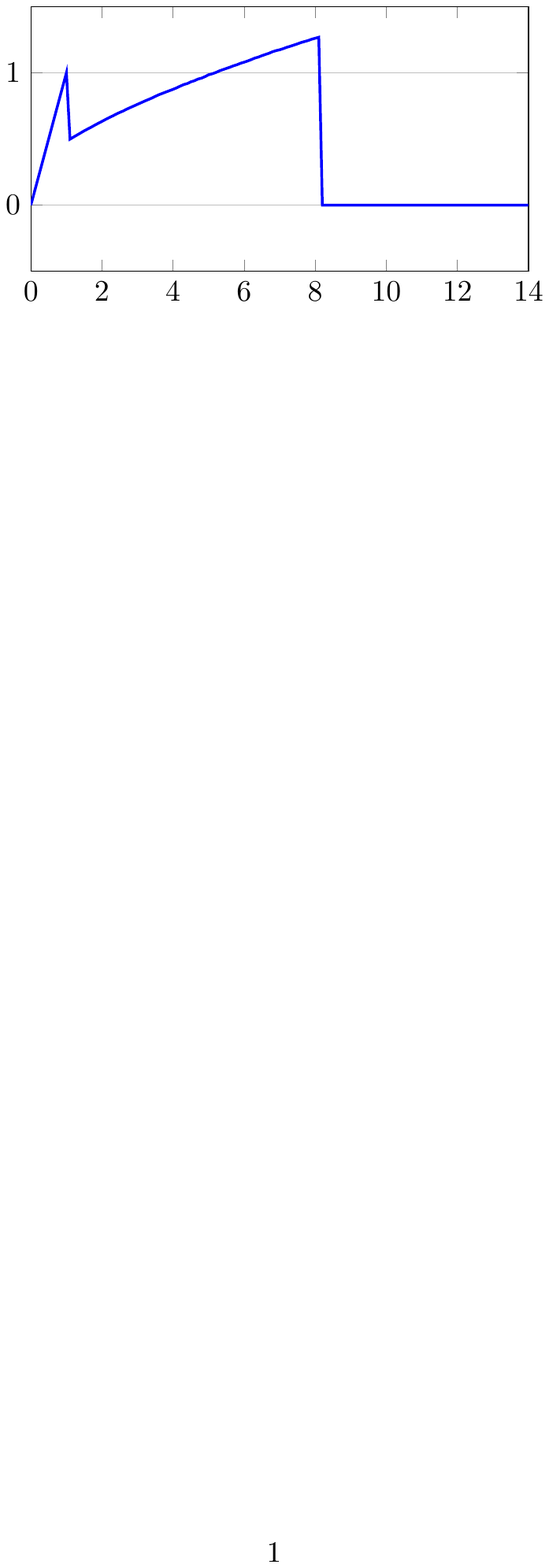}};
\end{tikzpicture}
\caption*{$q_*\mapsto\mathfrak{Q}(q_*,3.2)$}
\end{subfigure}
\hspace{5mm}
\begin{subfigure}[t]{43mm}
\centering
\begin{tikzpicture}[every node/.style={anchor=south west,inner sep=0pt},x=1mm, y=1mm]
\node at (0,0) {\includegraphics[width=43mm,trim=73mm 195.5mm 73.5mm 45.5mm, clip]{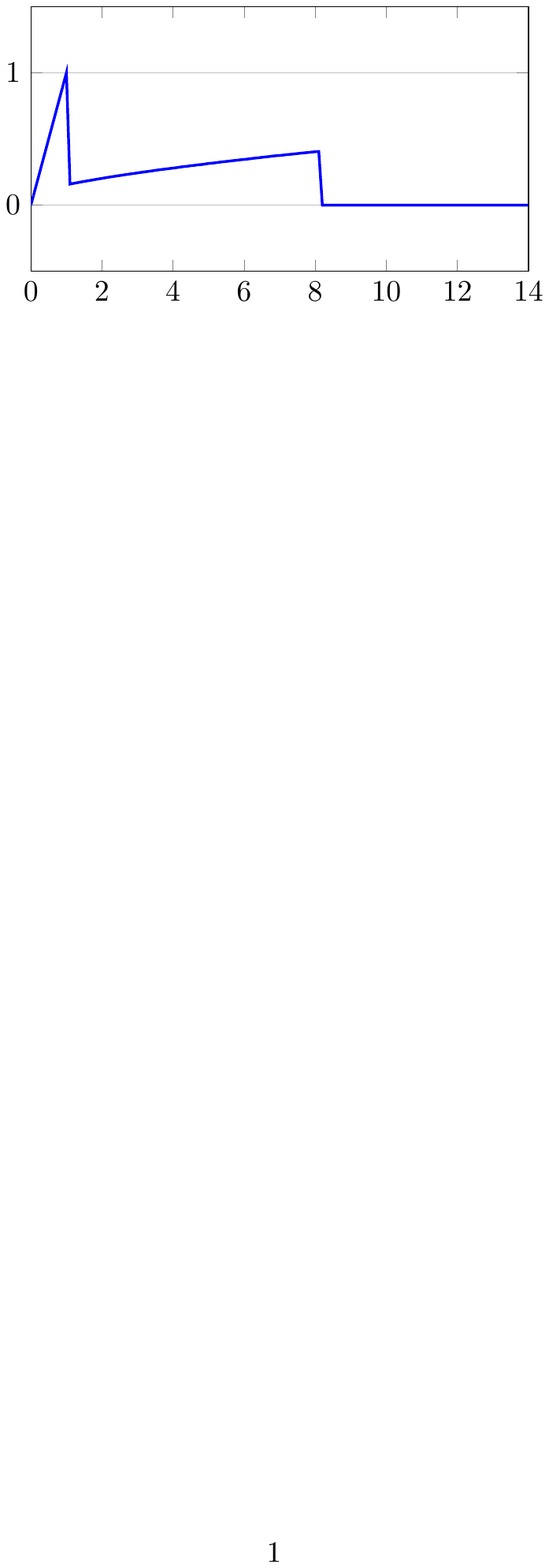}};
\end{tikzpicture}
\caption*{$q_*\mapsto\mathfrak{Q}(q_*,10)$}
\end{subfigure}
\caption{Numerical simulations corresponding to \eqref{e:data-opti-rare} for different values of $T$.}
\label{f:opt-rare}
\end{figure}

\section*{Acknowledgment}
A.~Corli and M.D.~Rosini are members of the {\em Gruppo Nazionale per l'{Ana\-li\-si} Matematica, la Probabilit\`{a} e le loro Applicazioni} (GNAMPA) of the {\em Istituto Nazionale di Alta Matematica} (INdAM) and acknowledge financial support from this institution.
The second author acknowledges the support of the R\'egion Bourgogne Franche-Comt\'e, projet 2017–2020 \lq\lq Analyse math\'ematique et simulation num\'erique d’EDP issus de probl\`emes de contr{\^o}le et du trafic routier\rq\rq, and Instytut Matematyki, Uniwersytet Marii Curie-Skłodowskiej, and Dipartimento di Matematica e Informatica, Università degli Studi di Ferrara for the hospitality during the preparation of the paper.
The last author acknowledges the support of the National Science Centre, Poland, Project \lq\lq Mathematics of multi-scale approaches in life and social sciences\rq\rq\ No.~2017/25/B/ST1/00051 and by University of Ferrara, FIR Project 2019 \lq\lq Leggi di conservazione di tipo iperbolico: teoria ed applicazioni\rq\rq.

{\bibliography{refe}
\bibliographystyle{abbrv}}


\end{document}